\documentclass[12pt, twoside]{article}
\usepackage{amsmath,amssymb,amsfonts,amsthm,mathrsfs}
\usepackage{float}
\usepackage{graphicx}
\usepackage{epsfig}
\usepackage{cite}
\usepackage{indentfirst}
\usepackage{mdwlist}
\usepackage{enumitem}
\usepackage{epstopdf}
\usepackage{caption}
\usepackage{subfigure}
\usepackage{booktabs}
\usepackage{cases}
\usepackage[numbers]{natbib}
\usepackage[top=1in,bottom=1in,left=1.25in,right=1.25in]{geometry}
\usepackage[colorlinks,linkcolor=red,anchorcolor=green,citecolor=blue,CJKbookmarks=True]{hyperref}
\usepackage{caption}
\newtheorem{theorem}{Theorem}[section]
\newtheorem{lemma}{Lemma}[section]
\newtheorem{remark}{Remark}[section]
\newtheorem{proposition}{Proposition}[section]
\newtheorem{definition}{Definition}[section]
\newtheorem{corollary}{Corollary}[section]
\numberwithin{equation}{section} \makeatletter
\renewenvironment{proof}[1][\proofname]{\par
	\noindent\pushQED{\qed}%
	\normalfont \textbf{#1.} \hspace{0.5em}\ignorespaces
}{\popQED\par}
\begin{document}
\captionsetup[figure]{labelfont={bf},labelformat={default},labelsep=period,name={Fig.}}	
	\title{\textbf{Threshold dynamics of a time-periodic nonlocal dispersal SIS epidemic model with saturated incidence function and Neumann boundary conditions}}
	\author{Ziwei Liang, Xiandong Lin and Qiru Wang\thanks{Corresponding author. E-mail: mcswqr@mail.sysu.edu.cn}\\
		School of Mathematics, Sun Yat-sen University,\\ Guangzhou 510275, Guangdong, PR China\\
		}
	\date{}
	\maketitle
		
\begin{abstract}
In this paper, we consider a time-periodic nonlocal dispersal susceptible-infected-susceptible (SIS) epidemic model with saturated incidence and Neumann boundary conditions in a spatiotemporally heterogeneous environment. First, we define the basic reproduction number for the model, which depends on dispersal rates, total population size and saturation parameters, and is quite different from those of models incorporating standard or bilinear incidence. Then, we establish its variational characterization and investigate the impacts of those parameters on it. Next, we explore the existence, uniqueness and global attractivity of the equilibria. We also analyze the asymptotic behaviors of endemic steady states for small saturation parameters, and for both small and large diffusion rates. Our results show that the saturation effect enables the total population size to have a significant impact on the disease dynamics. Furthermore, as the saturation parameter tends to zero, the basic reproduction number and the endemic equilibrium reduce to those of the standard-incidence model, respectively. Finally, we support our findings by numerical simulations.
		
\textbf{Key words:} A nonlocal dispersal SIS epidemic model; time-periodic; saturated incidence function; basic reproduction number; threshold dynamics.

\textbf{2020 MSC:} 35K57; 35R20; 35B40; 92D25.
\end{abstract}
	
\section{Introduction}

There is a general consensus that spatial diffusion and environmental heterogeneity are important factors to be accounted for in the transmission of infectious diseases. Since the seminal work of Kermack and McKendrick \cite{Kermack1927}, numerous diffusive epidemic models have been developed to study the spatial spread of infectious diseases \cite{Bartlett1956,Bailey1980}. It is remarkable that earlier models ignore the movements of individuals and the coefficients are constants. In this context, Allen et al. \cite{Allen2008} proposed an SIS reaction-diffusion model with standard incidence
\begin{equation}
	\begin{cases}
		\frac{\partial S}{\partial t} = d_S \Delta S - \frac{\beta(x)SI}{S+I} + \gamma(x)I, & x \in \Omega, t > 0, \\
		\frac{\partial I}{\partial t} = d_I \Delta I + \frac{\beta(x)SI}{S+I} - \gamma(x)I, & x \in \Omega, t > 0, \\
		\partial_\nu S = \partial_\nu I = 0, & x \in \partial\Omega, t > 0.
	\end{cases}\label{model1.1}
\end{equation}
They explored the effects of spatial heterogeneity of the environment and the movement of individuals on the persistence and extinction of a disease. Subsequently, Peng and Liu \cite{Peng2009} extended the results in \cite{Allen2008} and established the global stability of the steady states. In \cite{R.Peng2009} and \cite{Peng2013}, the authors further investigated the impact of diffusion rates on disease transmission. Moreover, multifarious factors have been incorporated into the model (\ref{model1.1}) such as nonlocal dispersal, temporal periodicity, advective environments, logistic source, different incidence functions and boundary conditions, etc. \cite{Peng2012,Cui2016,Cui2017,Yang2017,Yang2019,Feng2022,Feng2026,Feng2025,Wang2026}. In recent years, different incidence functions, nonlocal dispersal, and temporal periodicity have received considerable research attention.

In epidemiological modeling, the incidence function plays a vital role in describing the rate of new infections and understanding the transmission dynamics of infectious diseases. Commonly used forms include bilinear, standard, and saturated incidence functions. Among these, the bilinear incidence term of the form $\beta SI$ is widely applied in the classical compartmental models such as the SIR and SIS models proposed by Kermack and McKendrick \cite{Kermack1927}. This form assumes that the contact rate is proportional to the total population size, without an upper bound. It is suitable for describing disease transmission in small populations, in the absence of intervention measures, and during the early stage of an outbreak. de Jong et al. \cite{deJong1995} proposed the standard incidence of the form $\frac{\beta SI}{S+I}$, which assumes a constant contact rate, i.e., frequency-dependent transmission independent of population density. This formulation is
particularly applicable to transmission within medium-to-large populations in the absence of significant behavioral interventions. The saturated incidence function $\frac{\beta SI}{m+S+I}$ developed by Diekmann and Kretzschmar \cite{Diekmann1991} captures the saturation effect, reflecting that the number of effective contacts an individual can have with others is bounded above due to the spatial or social distribution of the population and time constraints. It can be used to characterize disease transmission in large populations under intervention measures during epidemic peaks. Recently, growing evidence indicates that the modified frequency-dependent incidence function $\frac{\beta SI}{m+S+I}$ is appropriate for describing the transmission of infectious diseases under specific scenarios. For instance, Guo et al. \cite{Guo2022} and Gao et al. \cite{Gao2024} constructed diffusive SIS epidemic models with saturated incidence in a heterogeneous environment and investigated the threshold dynamics and asymptotic profiles of endemic steady states. For other types of incidence functions, including those with more complex nonlinearities, one may refer to \cite{V.Capasso1978,Anderson1979,Liu1986,Heesterbeek1993,Peng2021}.

With the emergence of multiscale transportation networks, the global distribution patterns of humans and organisms that travel with them have been fundamentally altered. This transformation has exerted a profound impact on the spread of epidemics: human infectious diseases rarely remain restricted to small geographical areas, but spread rapidly across countries and continents via the movement of infected individuals \cite{Hallatscheka2014}. For long-range dispersal of species including humans, reaction-diffusion operators are inadequate to accurately characterize disease transmission mechanisms. Consequently, nonlocal convolution operators, which can better capture long-range dispersal behavior, have been naturally introduced \cite{Fife2003,Brockmann2006}. The research on nonlocal epidemic models stems from the seminal work of Kendall \cite{Kendall1965,Kendall1977}, who generalized the classical Kermack-McKendrick model to a spatially dependent integro-differential equation. He utilized the integral term $\beta S(x,t) \int_{-\infty}^{+\infty}K(x-y)I(y,t)\, dy$ to describe how infected individuals $I(y,t)$ at location $y$ disperse to infect susceptible individuals $S(x,t)$ at location $x$. Recently,  Yang et al. \cite{Yang2019} formulated a nonlocal dispersal SIS epidemic model with standard incidence and Neumann boundary conditions. They explored the properties of the basic reproduction number and established the existence, uniqueness, and stability of steady states, along with the asymptotic profiles of endemic steady states for large dispersal rates. Feng et al. \cite{Feng2022} investigated the threshold dynamics of a nonlocal dispersal SIS epidemic model with bilinear incidence and explored the asymptotic profiles of positive steady states. Furthermore, Feng et al. \cite{Feng2026} proposed a nonlocal dispersal SIS epidemic model with saturated incidence. They analyzed the basic reproduction number and the limiting behaviors of positive steady states for both small and large dispersal rates. Further results on nonlocal epidemic models can be found in \cite{Yang2017,RassRadcliffe2003,Ruan2007,Kuniya2018,Zhao2026} and the references cited therein.

It should be pointed out that the transmission and recovery rates in the above models depend solely on spatial variables. In practice, such parameters usually display spatiotemporal heterogeneity. Typically, they vary periodically in time, driven by seasonal climatic cycles or socio-economic factors. Following this perspective, Peng and Zhao \cite{Peng2012} proposed a time-periodic SIS reaction-diffusion model. They defined the basic reproduction number $R_{0}$ of the model, upon which threshold results concerning the global dynamics were derived. Furthermore, the results demonstrate that the interplay of spatial heterogeneity and temporal periodicity tends to enhance the persistence of the disease. Based on \cite{Peng2012} and \cite{Yang2019}, Lin and Wang \cite{Lin2023,Lin2024} further incorporated factors such as nonlocal dispersal and temporal periodicity of the dispersal kernel, and proposed two time-periodic nonlocal dispersal SIS epidemic models with Neumann and Dirichlet boundary conditions, respectively. They investigated the threshold dynamics of the models and discussed the effects of large and small dispersal rates of susceptible and infected individuals on disease persistence and extinction. Feng et al. \cite{Feng2025} established the existence and asymptotic profiles of the endemic periodic solution for the nonlocal dispersal SIS epidemic model with standard incidence and logistic source in a time-periodic environment. Wang et al. \cite{Wang2026} studied the spatiotemporal dynamics of a periodic SIS epidemic model with standard incidence and external supply governed by Fokker-Planck-type diffusion law in a spatially heterogeneous environment. They found that periodicity promotes disease persistence and introduces increased complexity into the disease dynamics. Chen et al. \cite{Chen2026} formulated a reaction-diffusion-advection SIS epidemic model with saturated incidence in a spatiotemporally heterogeneous environment and obtained the spatial distribution of the disease in the case of sufficiently small diffusion of the infected population. More results on time-periodic epidemic models are available in \cite{Gao2024,Shaman2011,Bai2020,Zhang2021}.

Inspired by the references \cite{Yang2019,Feng2026,Feng2025,Lin2023} and others, in this paper, we consider the following nonlocal dispersal SIS epidemic model with saturated incidence function in a spatiotemporally heterogeneous environment
\begin{equation}
	\begin{cases}
		\displaystyle
		\frac{\partial S}{\partial t}
		= d_S \int_{\Omega} J(x-y,t)\big[S(y,t)-S(x,t)\big]\,dy
		- \frac{\beta(x,t) S I}{m(x,t)+S+I}
		+ \gamma(x,t) I, & x \in \Omega,\ t>0, \\[1.2em]
		
		\displaystyle
		\frac{\partial I}{\partial t}
		= d_I \int_{\Omega} J(x-y,t)\big[I(y,t)-I(x,t)\big]\,dy
		+ \frac{\beta(x,t) S I}{m(x,t)+S+I}
		- \gamma(x,t) I, & x \in \Omega,\ t>0, \\[1.2em]
		
		S(x,0)=S_0(x), \quad I(x,0)=I_0(x), & x \in \Omega,
	\end{cases}\label{model1.2}
\end{equation}
where the habitat $\Omega \subset \mathbb{R}^n$ $(n \ge 1)$ is a bounded domain with smooth boundary; $S(x,t)$ and $I(x,t)$ denote the densities of susceptible and infected individuals at location $x$ and time $t$, respectively; $d_S$ and $d_I$ are positive diffusion coefficients for susceptible and infected individuals, respectively; $\beta(x,t)$ and $\gamma(x,t)$ stand for the transmission rate and recovery rate at location $x$ and time $t$, respectively; $m(x,t)$ reflects the limitation on the effective number of contacts an individual can have with others due to population distribution and time constraints \cite{Gao2024}. The integral operator $\int_{\mathbb{R}^n} J(x-y,t)\big(u(y,t)-u(x,t)\big)dy$ describes diffusion processes, $\int_{\mathbb{R}^n} J(x-y,t)u(y,t)dy$ is the rate at which individuals are arriving at position $x$ from all other places, and
$-\int_{\mathbb{R}^n} J(x-y,t)u(x,t)dy$ represents the rate at which they are leaving location $x$ to travel to all other sites. Since the integrals are taken over the domain $\Omega$, it means that diffusion takes place only in $\Omega$. Individuals fail to enter or leave the domain $\Omega$. This is analogous to the homogeneous Neumann boundary condition. Therefore, we still call it a Neumann boundary condition, see e.g. \cite{Andreu2010}. Throughout this paper, we assume that
\begin{itemize}
	\item[(A1)] $J(x,t)$ is a nonnegative and continuous function on $\mathbb{R}^n \times \mathbb{R}$, and satisfies $J(x,t) = J(x,t+T)$, $J(x,t) = J(-x,t)$,  $J(0,t) > 0$  for all  $x\in \mathbb{R}^n$, $t\in \mathbb{R}$, $\int_{\mathbb{R}^n} J(x,t)\,dx = 1$ for all  $t\in \mathbb{R}$.
	\item[(A2)] $\beta,\gamma,m:\overline{\Omega}\times\mathbb{R}\to\mathbb{R}$ are continuous functions, all $T$-periodic in the second variable. Moreover, $\beta$ and $\gamma$ are strictly positive, and $m$ is nonnegative on $\overline{\Omega}\times\mathbb{R}$.
	\item[(A3)] $S_0(x)$ and $I_0(x)$ are nonnegative continuous functions on $\bar{\Omega}$, and the total number of initial infected individuals is positive, that is, $\int_{\Omega} I_0(x)\,dx > 0$.
\end{itemize}

It is widely recognized that the basic reproduction number $R_{0}$ serves as a threshold for determining the dynamics of epidemic models. For some  reaction-diffusion models (see, e.g., \cite{Allen2008,Peng2012}), this value equals the principal eigenvalue of a related eigenvalue problem. However, the nonlocal dispersal eigenvalue problem generally admits no principal eigenvalue, which can yield essential differences between nonlocal dispersal and reaction-diffusion problems. To address this challenge, we apply the theory developed by Thieme \cite{Thieme2009} to define the basic reproduction number of system (\ref{model1.2}). It should be pointed out that the basic reproduction number of the model with standard incidence \cite{Allen2008,Yang2019,Lin2023,Lin2024} depends only on $d_{I}$. In contrast to these models, the basic reproduction number of system (\ref{model1.2}) depends on $d_{I}$ as well as on the total population size and the saturation parameter $m(x,t)$. Therefore, we adopt the method proposed in \cite{arlin2026,arlin2026p} to derive a variational characterization of the basic reproduction number for system \eqref{model1.2}. Based on this characterization, we further explore the asymptotic behaviors of $R_0$ as the total population size, the saturation parameter $m(x,t)$, and the ratio of $m(x,t)$ to the total population size tend to zero and infinity individually. Compared with the model with saturated incidence in \cite{Feng2026}, system (\ref{model1.2}) is time-periodic. Therefore, the method adopted in \cite{Feng2026} for analyzing the limiting properties of the endemic equilibrium cannot be applied to our model. Inspired by \cite{Lin2023} and \cite{Lin2024}, we utilize the sub-super solutions method to obtain the limiting results of the endemic equilibrium of system (\ref{model1.2}) with respect to small saturation parameters and large/small diffusion coefficients when $d_S=d_I$. It is worth noting that when investigating the asymptotic behavior of the endemic equilibrium for large dispersal rates, we can relax the stringent condition $\beta(x,t)>\gamma(x,t)$ for all $(x,t)\in\overline{\Omega}\times\mathbb{R}$, imposed in~\cite{Lin2023}, to the weaker condition that $\int_{\Omega}\int_0^T \beta(x,t)\,dt\,dx>\int_{\Omega}\int_0^T \gamma(x,t)\,dt\,dx$. Correspondingly, this makes the analysis concerning the limiting behavior of the endemic equilibrium more delicate. In addition, the long-time behavior of the model is not taken into account in \cite{Feng2026}, which we supplement in our work.

Overall, the introduction of saturated incidence renders both the total population size and the saturation parameter to exert a substantial influence on disease dynamics. Specifically, the saturation effect diminishes transmission risk and can markedly reshape the spatial distribution of the disease under specific conditions. As for the role of total population siz, our findings indicate that whether restricting the mobility of susceptible and infected individuals can eradicate the disease relies not merely on the sign of the difference between the period-averaged transmission rate and the period-averaged recovery rate but also on the total population size, a feature that distinguishes our model from those with standard or bilinear incidence functions. Furthermore, as the saturation parameter tends to zero, the basic reproduction number and the endemic equilibrium reduce to those of the standard-incidence model, respectively.

The remainder of the paper is organized as follows. In Section \ref{section 2}, we define the basic reproduction number of system (\ref{model1.2}), derive its variational characterization and investigate its limiting profiles. In Section \ref{section 3}, we establish the existence, uniqueness and global attractivity of the disease-free equilibrium and the endemic equilibrium. Section \ref{section 4} is devoted to exploring the asymptotic profiles of the endemic equilibrium of system (\ref{model1.2}) for small saturation parameters, and for both large and small diffusion rates. In Section \ref{section 5}, numerical simulations are carried out to verify our theoretical findings. Finally, we summarize our paper with some discussions.

\section{The basic reproduction number and its variational characterization}
	\label{section 2}

In this section, we define the basic reproduction number for system (\ref{model1.2}) by using the standard theory from \cite{Wang2008,Thieme2009}, and then investigate its dependence on the diffusion rate \(d_I\), the saturation parameter \(m(x,t)\), the total population size, and the ratio of \(m(x,t)\) to the total population size.

Following the method developed in \cite{arlin2026,arlin2026p}, we establish a variational characterization of the basic reproduction number and then apply it to study the limiting profile of $R_{0}$. This provides an alternative approach to examining the effects of parameters on the basic reproduction number. In contrast, the method of Zhang and Zhao \cite{Zhang2021} transforms the problem concerning the limiting profile into the study of the spectral bound of the corresponding nonlocal dispersal operator, whereas our approach relies on the variational characterization established above.

Let \( X := C(\bar{\Omega}) \) be an ordered Banach space, equipped with the maximum norm, and a positive cone \( X_+ := \{u \in X : u(x) \geq 0, \forall x \in \bar{\Omega}\} \). Define
\[
C_T := \{u \in C(\bar{\Omega} \times \mathbb{R}) : u(x, t) = u(x, t + T), \forall (x, t) \in \bar{\Omega} \times \mathbb{R}\},
\]
and
\[
C_T^+ := \{u \in C_T : u(x, t) \geq 0, \forall (x, t) \in \bar{\Omega} \times \mathbb{R}\}.
\]
For a closed linear operator \( A \) on \(X\), 	define its spectral bound by
	\[
	s(A) = \sup\{\operatorname{Re} \lambda \in \mathbb{R} : \lambda \in \sigma(A)\},
	\]
	where \( \sigma(A) \) denotes the spectrum of \( A \). The spectral radius of \( A \) is defined by
	\[
	r(A) = \sup\{|\lambda| : \lambda \in \sigma(A)\}.
	\]
For convenience, let
\[
\mathcal{J}[u](x,t) := \int_{\Omega} J(x - y, t) [u(y, t) - u(x, t)] \, dy, \quad \forall u \in C(\bar{\Omega} \times \mathbb{R}).
\]
Adding the first and second equations of system (\ref{model1.2}) gives
\[
\frac{\partial}{\partial t} (S(x,t) + I(x,t)) = d_S \mathcal{J} [S] + d_I \mathcal{J} [I], \quad t \geq 0.
\]
Integrating it over $\Omega$, it follows from the symmetry of $J(x-y,t)$ that
\[
\frac{\partial}{\partial t} \int_{\Omega} (S(x, t) + I(x, t)) \, dx = 0, \quad t \geq 0.
\]
This indicates that the total population size is constant, denoted by $N$, that is
\[
\int_{\Omega} (S(x, t) + I(x, t)) \, dx = N, \quad t \geq 0.
\]

Linearizing system (\ref{model1.2}) at the unique disease-free equilibrium $(\frac{N}{\mid\Omega\mid}, 0)$, whose existence and uniqueness will be established in Section \ref{section 3} below, one can obtain the following decoupled equation for $I$
\begin{equation}
\frac{\partial u(x,t)}{\partial t} = d_I \mathcal{J} [u](x, t) + \theta(x, t) u(x, t) - \gamma(x, t) u(x, t), \quad x \in \Omega, \, t > 0,
\label{2.1}
\end{equation}
where $\theta(x, t)=\frac{\beta(x,t)\frac{N}{\mid\Omega\mid}}{m(x,t)+\frac{N}{\mid\Omega\mid}}$. Let \(\{V(t, s): t \geq s\}\) be the evolution family on \(X\) defined by following equation
\[
\frac{\partial u}{\partial t} = d_I \mathcal{J} [u] - \gamma(x, t) u, \quad x \in \Omega, \, t > 0.
\]
Let \( \phi(x, s) \) denote the density distribution of infected individuals at location \( x \in \Omega \) and time \( s \). Then \( \theta(x, s)\phi(x, s) \) corresponds to the density distribution of secondary infections generated by cases introduced at time \( s \). Moreover, for any fixed \( t \geq s \), \( V(t, s)\theta(x, s)\phi(x, s) \) is the density distribution at location \( x \) of those infected individuals who were newly infected at time \( s \) and remain infected at time \( t \). Thus, the density distribution of cumulative new infections at location \( x \) and time \( t \) generated by all infected individuals \( \phi(x, s) \) introduced at all times prior to \( t \) is
\begin{equation*}
	 \int_{0}^{\infty} V(t, t-s)\theta(\cdot, t-s)\phi(\cdot, t-s)\,ds.
\end{equation*}
Define
\begin{equation*}
	\mathcal{L}[\phi](x, t) := \int_{0}^{\infty} V(t, t-s)\theta(x, t-s)\phi(x, t-s)\,ds, \quad \phi \in C_T.
	\label{eq:Loperator}
\end{equation*}
To reflect the dependence of the basic reproduction number on $d_I$, $N$, and $m(x,t)$, the basic reproduction number for system (\ref{model1.2}) is defined as
\begin{equation*}
	R_0 = R_0( d_I, N, m) = r(\mathcal{L}),
\end{equation*}
where \( r(\mathcal{L}) \) stands for the spectral radius of \( \mathcal{L} \). Define
\begin{align*}
	&\mathcal{B}[u](x,t) := -\partial_t u(x,t) + d_I\mathcal{J}[u](x,t) - \gamma(x,t)u(x,t), \quad u\in C_T,\\[0.2em]
	&\mathcal{F}[u](x,t) := \theta(x,t)u(x,t), \quad u\in C_T.
\end{align*}
According to \citep[Proposition 2.3]{Feng2025}, we have $\mathcal{B}$ is a resolvent positive operator on $C_T$ and $s(\mathcal{B}) < 0$. For any $\mu>0$, set
$
	\mathcal{M}_{\mu}:= \mathcal{B}+\frac{1}{\mu}\mathcal{F}.
$

We first present some results on the basic reproduction number, which will be useful in analyzing the limiting behavior of $R_{0}$. Following the same arguments as in \citep[Lemma 2.5 or Theorem 2.10]{Zhang2021}, \citep[Lemma 2.2 and Proposition 2.2]{Lin2023} or \citep[Proposition 2.4]{Feng2025}, we have the following lemma.

\begin{lemma}
	\label{lemma2.3}
	The following assertions are valid.
	\begin{itemize}
		\item[{\rm (i)}] For any $\mu>0$, \( R_0 - \mu \) has the same sign as \( s(\mathcal{M}_{\mu})=s\left(\mathcal{B} + \frac{1}{\mu}\mathcal{F}\right) \){\rm;}
		\item[{\rm (ii)}] \( \mu = R_0 >0\) is the unique positive solution of $s(\mathcal{M}_{\mu})=0$.	
	\end{itemize}
\end{lemma}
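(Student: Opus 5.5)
The plan is to derive both assertions from Thieme's theory of resolvent positive operators \cite{Thieme2009}, applied to the splitting $\mathcal{B}+\mathcal{F}$ on $C_T$. The first step is to identify $\mathcal{L}$ with $-\mathcal{F}\mathcal{B}^{-1}$ (up to similarity) on $C_T$. Since $\mathcal{B}$ is resolvent positive with $s(\mathcal{B})<0$ (\citep[Proposition 2.3]{Feng2025}), $-\mathcal{B}^{-1}$ is a positive bounded operator. Writing out the periodic solution of $-\partial_t u+d_I\mathcal{J}[u]-\gamma u=-f$ through the evolution family $V(t,s)$ gives
\[
-\mathcal{B}^{-1}f(\cdot,t)=\int_0^\infty V(t,t-s)f(\cdot,t-s)\,ds .
\]
Here the integral converges because the exponential decay of $V$ follows from $s(\mathcal{B})<0$, i.e.\ $r(V(T,0))<1$. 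Hence $\mathcal{L}=-\mathcal{B}^{-1}\mathcal{F}$, which has the same spectral radius as $-\mathcal{F}\mathcal{B}^{-1}$. Also $\mathcal{F}$ is positive and bounded, since $\theta\in C_T^+$.

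For (i), we invoke \citep[Theorem 3.5]{Thieme2009}. For a resolvent positive $\mathcal{B}$ with $s(\mathcal{B})<0$ and a positive bounded perturbation $\mathcal{C}$, the operator $\mathcal{B}+\mathcal{C}$ is resolvent positive, and $s(\mathcal{B}+\mathcal{C})$ has the same sign as $r(-\mathcal{C}\mathcal{B}^{-1})-1$. Apply this with $\mathcal{C}=\frac1\mu\mathcal{F}$ for fixed $\mu>0$. Then $r(-\frac1\mu\mathcal{F}\mathcal{B}^{-1})-1=\frac{R_0}{\mu}-1$, which has the sign of $R_0-\mu$. One hypothesis needs checking: Thieme's theorem requires $C_T$ to have a normal, generating cone. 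This holds for the sup-norm cone $C_T^+$, so the theorem applies directly.

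For (ii), we need monotonicity and continuity of $g(\mu):=s(\mathcal{M}_\mu)$ on $(0,\infty)$.
\begin{itemize}
\item \emph{Monotonicity.} Since $\mathcal{F}$ is positive, $\mu\mapsto\mathcal{M}_\mu$ is decreasing in the resolvent-positive order. By the comparison property of spectral bounds (\citep{Thieme2009}, or the characterization of $s$ via the periodic evolution family, $s(\mathcal{M}_\mu)=\frac1T\ln r(U_\mu(T,0))$), $g$ is nonincreasing.
\item \emph{Continuity.} This follows from continuity of $r(U_\mu(T,0))$ in the coefficient $\theta/\mu$, because $U_\mu(T,0)$ depends continuously in operator norm on $\mu$.
\end{itemize}
Uniqueness of the zero then comes from (i) alone, with no need for strict monotonicity. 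By (i), $g(\mu)>0$ for $\mu<R_0$, $g(\mu)<0$ for $\mu>R_0$, and $g(\mu)=0$ only if $\mu=R_0$. So it suffices to show $R_0>0$ and $g(R_0)=0$.
\begin{itemize}
\item \emph{$R_0>0$.} Since $\theta>0$ (as $\beta>0$, $N>0$) and $V$ is positive with $V(t,s)\phi\ge e^{-\int_s^t(d_I+\gamma)}\phi$ pointwise for nonnegative $\phi$, we get $\mathcal{L}[\mathbf 1]\ge c\mathbf 1$ for some $c>0$. Hence $R_0\ge c>0$.
\item \emph{$g(R_0)=0$.} This follows from (i) and continuity: $g(R_0)=0$ is squeezed between positive values on the left and negative values on the right.
\end{itemize}

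The main obstacle is the continuity of $\mu\mapsto s(\mathcal{M}_\mu)$. The reason is that nonlocal operators may lack a principal eigenvalue, so eigenvalue-perturbation arguments are unavailable. I would handle it through the monodromy operator: $s(\mathcal{M}_\mu)=\frac1T\ln r(U_\mu(T,0))$, where $r(U_\mu(T,0))$ is always positive because the lower bound above makes $U_\mu(T,0)$ dominate $e^{-CT}I$. Upper semicontinuity of $r$ comes for free. Combined with monotonicity and the sign information from (i), this already suffices to conclude at $\mu=R_0$.
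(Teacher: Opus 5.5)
Your argument follows the route the paper relies on; the paper gives no proof of its own and defers to Zhang--Zhao, Lin--Wang and Feng et al. The route is: $\mathcal{L}=-\mathcal{B}^{-1}\mathcal{F}$, so $R_0=r(-\mathcal{F}\mathcal{B}^{-1})$, and Thieme's Theorem 3.5 applied to the positive perturbation $\frac1\mu\mathcal{F}$ of $\mathcal{B}$ gives (i). Your check of $R_0>0$ via $\mathcal{L}[\mathbf{1}]\ge c\mathbf{1}$ is also fine.

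\emph{The flawed step: the continuity detour in (ii).} Norm-continuity of $\mu\mapsto U_\mu(T,0)$ does not give continuity of $r(U_\mu(T,0))$. The spectral radius is only upper semicontinuous in operator norm, and it can jump even for positive operators approximated from below. Your fallback, upper semicontinuity plus monotonicity, only shows $s(\mathcal{M}_{R_0})=\lim_{\mu\to R_0^-}s(\mathcal{M}_\mu)\ge 0$. It does not exclude $s(\mathcal{M}_{R_0})>0$, because nothing gives lower semicontinuity from the right.

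\emph{Why it does not matter.} Thieme's sign statement is a trichotomy: it includes $r\bigl(-\frac1\mu\mathcal{F}\mathcal{B}^{-1}\bigr)=1\Leftrightarrow s(\mathcal{M}_\mu)=0$. The paper uses exactly this case in Theorem~\ref{theorem3.2}, where $R_0=1$ is turned into $s(\mathcal{B}+\mathcal{F})=0$. So (i) at $\mu=R_0>0$ gives $s(\mathcal{M}_{R_0})=0$ at once, and uniqueness follows from (i) as you noted.

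\emph{If you want continuity anyway,} get it from the order structure rather than the norm. Since $\frac1\mu\theta\le\frac1{\mu'}\theta+\|\theta\|_{C_T}\bigl|\frac1\mu-\frac1{\mu'}\bigr|$, monotonicity of $s$ in the multiplication term and $s(A+c)=s(A)+c$ for constants $c$ give
$\bigl|s(\mathcal{M}_\mu)-s(\mathcal{M}_{\mu'})\bigr|\le\|\theta\|_{C_T}\bigl|\frac1\mu-\frac1{\mu'}\bigr|$.
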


Now we are in a position to give the main result of this section.

\begin{theorem}
	\label{theorem2.2}
	The following statements hold.
	\begin{itemize}
		\item[{\rm (i)}]\quad
		\(\begin{aligned}[t]
			R_0 &= \inf_{\varphi \in \mathcal{C}}\sup_{(x,t)\in\bar{\Omega}\times[0,T]} \frac{\theta(x,t)\varphi(x,t)}{\partial_t \varphi(x,t) - d_I\mathcal{J}[\varphi](x,t) + \gamma(x,t)\varphi(x,t)} \\
			&= \sup_{\varphi \in \operatorname{Int} C_T^+ \cap C^{0,1}\big(\bar{\Omega} \times \mathbb{R}\big)} \inf_{(x,t)\in\Omega_{\varphi}} \frac{\theta(x,t)\varphi(x,t)}{\partial_t \varphi(x,t) - d_I\mathcal{J}[\varphi](x,t) + \gamma(x,t)\varphi(x,t)},
		\end{aligned}\)

       where
\[ \mathcal{C}:= \left\lbrace \varphi \in \operatorname{Int} C_T^+ \cap C^{0,1}\big(\bar{\Omega} \times \mathbb{R}\big) : \mathcal{B}[\varphi](x,t)<  0,  \forall  (x,t)\in  \bar{\Omega} \times [0,T] \right\rbrace \]
and
$\Omega_{\varphi}:= \left\lbrace (x,t)\in  \bar{\Omega} \times [0,T]  : \mathcal{B}[\varphi](x,t)<  0 \right\rbrace $ for each $\varphi \in \operatorname{Int} C_T^+ \cap C^{0,1}\big(\bar{\Omega} \times \mathbb{R}\big) ${\rm;}		
		\item[{\rm (ii)}]
		$\lim\limits_{d_I\to 0} R_0(d_{I}, N, m)
		= \max\limits_{x\in\bar{\Omega}} \dfrac{\int_0^T \theta(x,t)dt}{\int_0^T \gamma(x,t)dt}${\rm;}~~$\lim\limits_{d_I\to +\infty} R_0(d_{I}, N, m)
		= \dfrac{\int_{\Omega}\int_0^T \theta(x,t)dtdx}{\int_{\Omega}\int_0^T \gamma(x,t)dtdx}${\rm;}
		\item[{\rm (iii)}]
		If $m$ is strictly positive, then
		$\lim\limits_{N\to 0} R_0(d_{I}, N, m)
		= 0${\rm,}~~$\lim\limits_{N\to +\infty} R_0(d_{I}, N, m)
		= \hat{R}_0$,~where $\hat{R}_0=r(\hat{\mathcal{L}})$,~ $\hat{\mathcal{L}}$ is defined by replacing $\theta(x,t)$ by $\beta(x,t)$ in the definition of $\mathcal{L}${\rm;}
		\item[{\rm (iv)}]
		$\lim\limits_{m_{max}\to 0} R_0(d_{I}, N, m)
		= \hat{R}_0${\rm,}~~$\lim\limits_{m_{min}\to +\infty} R_0(d_{I}, N, m)
		= 0$, where~ $m_{max}=\max_{(x,t)\in\bar{\Omega}\times[0,T]}m(x,t)$, $m_{min}=\min_{(x,t)\in\bar{\Omega}\times[0,T]}m(x,t)${\rm;} 
		\item[{\rm (v)}]
		Define the sequence $P_{n}(x,t):=\frac{m_{n}(x,t)}{N_{n}}$. Then
		$\lim\limits_{P_{n}\to 0} R_0(d_{I}, N, m)
		= \hat{R}_0${\rm,}~~$\lim\limits_{P_{n}\to +\infty} R_0(d_{I}, N, m)
		= 0$, ~~$\lim\limits_{P_{n}\to P(x,t)} R_0(d_{I}, N, m)=\tilde{R}_0$,~where $P(x,t)$ is a nontrivial positive $T$-periodic function, $\tilde{R}_0=r(\tilde{\mathcal{L}})$,~ $\tilde{\mathcal{L}}$ is defined by replacing $\theta(x,t)$ by $\frac{\beta(x,t)}{|\Omega|P(x,t)+1}$ in the definition of $\mathcal{L}$.
	\end{itemize}
\end{theorem}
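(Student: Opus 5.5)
The plan is to derive everything from Lemma \ref{lemma2.3}, together with a sub/super-solution characterization of the spectral bound $s(\mathcal{M}_\mu)$. For (i), denote the first expression by $R^*:=\inf_{\varphi\in\mathcal{C}}\sup(\cdots)$ and the second by $R_*:=\sup_\varphi\inf_{\Omega_\varphi}(\cdots)$.

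\textbf{Upper bound $R_0\le R^*$.} Take $\varphi\in\mathcal{C}$ and let $\mu>\sup_{\bar\Omega\times[0,T]}\theta\varphi/(-\mathcal{B}[\varphi])$. Then $\mathcal{M}_\mu[\varphi]=\mathcal{B}[\varphi]+\mu^{-1}\theta\varphi<0$ on $\bar\Omega\times[0,T]$, so $\varphi$ is a strict positive super-solution. Since $\mathcal{M}_\mu$ generates a positive evolution family, comparing the periodic problem $-\partial_t u+d_I\mathcal{J}[u]+(\mu^{-1}\theta-\gamma)u=0$ with $\varphi$ gives $s(\mathcal{M}_\mu)<0$. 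This uses the standard fact, as in \citep[Proposition 2.3]{Feng2025} and \cite{Lin2023}, that $s(\mathcal{M}_\mu)$ equals the generalized principal eigenvalue $\inf\{\lambda:\exists\varphi\in\operatorname{Int}C_T^+,\ \mathcal{M}_\mu[\varphi]\le\lambda\varphi\}$. Lemma \ref{lemma2.3}(i) then gives $R_0<\mu$, and letting $\mu$ decrease to the supremum and taking the infimum over $\varphi$ yields $R_0\le R^*$.

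\textbf{Lower bound $R_0\ge R_*$.} Take $\varphi$ Lipschitz and positive, and $\mu<\inf_{\Omega_\varphi}(\cdots)$. On $\Omega_\varphi$ we get $\mathcal{M}_\mu[\varphi]>0$. Off $\Omega_\varphi$ we have $\mathcal{B}[\varphi]\ge0$, hence $\mathcal{M}_\mu[\varphi]\ge\mu^{-1}\theta\varphi>0$. So $\varphi$ is a strict sub-solution, which gives $s(\mathcal{M}_\mu)>0$ by the sup-characterization of the spectral bound, and therefore $R_0>\mu$.

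\textbf{Equality.} To close both inequalities, for $\epsilon>0$ I will approximate a principal eigenfunction. The nonlocal operator may lack a principal eigenvalue, so I will perturb $\gamma$ (or add a small local term) so that $\mathcal{M}_{R_0\pm\epsilon}$ admits a positive eigenfunction with eigenvalue of the correct sign. Mollifying in $x$ makes the eigenfunction Lipschitz, and it then serves as the test function $\varphi$. This approximation step, which requires continuity of $s(\cdot)$ under small perturbations of the coefficients, is the main obstacle, and I expect to borrow it from \cite{arlin2026,arlin2026p}.

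For (ii), $\theta$ is a fixed positive continuous $T$-periodic function independent of $d_I$, so the limits follow verbatim from the corresponding results for the standard-incidence model \cite{Lin2023,Zhang2021}, with $\beta$ replaced by $\theta$. Alternatively they can be obtained from (i). For $d_I\to\infty$, use test functions $\varphi=\exp\big(\int_0^t(\cdot)\big)$ depending only on $t$, with the averaged ratio. For $d_I\to0$, use $\varphi(x,t)=\exp\big(\int_0^t(\theta/\mu-\gamma)\,ds-t\,\overline{(\theta/\mu-\gamma)}(x)\big)$, which is pointwise in $x$, together with the continuity of the resulting constant in $x$.

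For (iii)–(v), the key observation is that by (i) $R_0$ is monotone and positively homogeneous in $\theta$: $\theta_1\le\theta_2$ implies $R_0(\theta_1)\le R_0(\theta_2)$, and $R_0(c\theta)=cR_0(\theta)$. Consequently $\theta_1\le(1+\epsilon)\theta_2$ implies $R_0(\theta_1)\le(1+\epsilon)R_0(\theta_2)$. Writing $\theta=\beta/(1+|\Omega|P)$ with $P=m/N$, each limit reduces to uniform convergence of $\theta$.
\begin{itemize}
\item When $N\to\infty$ or $m_{\max}\to0$ (so $P\to0$ uniformly), $\theta\to\beta$ uniformly, and $\beta(1-\epsilon)\le\theta\le\beta$ gives $R_0\to\hat R_0$.
\item When $N\to0$ with $m>0$, or $m_{\min}\to\infty$ (so $P\to\infty$), we have $\theta\le\epsilon\beta$, hence $R_0\le\epsilon\hat R_0\to0$.
\item When $P_n\to P$ uniformly, $\theta_n/\tilde\theta\to1$ uniformly, where $\tilde\theta=\beta/(|\Omega|P+1)$, and therefore $R_0\to\tilde R_0$.
\end{itemize}
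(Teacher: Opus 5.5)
Your proposal is correct, and apart from one step it follows the paper.

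\begin{itemize}
\item Your upper bound $R_0\le R^*$ is the paper's contradiction argument for $R_0\le\mu_1$, and your lower bound is its analogue for $\mu_2$.
\item For (ii), you obtain the limits by citing the standard-incidence results with $\beta$ replaced by $\theta$, as the paper does.
\item For (iii)--(v), your multiplicative sandwich ($\theta_1\le(1+\epsilon)\theta_2$ implies $R_0(\theta_1)\le(1+\epsilon)R_0(\theta_2)$) is the same mechanism as the paper's continuity-in-$\theta$ argument, including the bound $R_0(\theta_n)\le k_nR_0(1)$ when $\theta_n\to0$.
\end{itemize}

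The step you single out as the main obstacle is where you diverge, and it is not actually an obstacle. The paper uses \citep[Proposition 2.2]{Lin2024} in the form $s(\mathcal{M}_\mu)=\lambda_p(\mathcal{M}_\mu)=\lambda_p'(\mathcal{M}_\mu)$. There both generalized eigenvalues are defined with test functions already in $\operatorname{Int}C_T^+\cap C^{0,1}(\bar\Omega\times\mathbb{R})$.
\begin{itemize}
\item For $\mu>R_0$ you have $\lambda_p'(\mathcal{M}_\mu)<0$. The definition of this infimum then directly gives a Lipschitz $\psi$ and some $\lambda<0$ with $\mathcal{M}_\mu[\psi]\le\lambda\psi$. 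Such a $\psi$ lies in $\mathcal{C}$ and its ratio is at most $\mu$, so $R^*\le R_0$. This is exactly the paper's one-line step ``for any $\mu>R_0$ there exists $\psi$ with $\mathcal{M}_\mu[\psi]\le0$''.
\item Symmetrically, for $\mu<R_0$ the sup-characterization $\lambda_p$ gives $R_*\ge R_0$.
\end{itemize}
Your plan of perturbing the zero-order term to get a principal eigenfunction and then mollifying in $x$ would also work. The spectral bound is continuous in the zero-order coefficient, and the strict sign absorbs the mollification error. But this only re-derives what the Lipschitz-class characterization already contains. You would need it only if the characterization were available just for merely continuous test functions.

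Your optional alternative route to the $d_I\to+\infty$ limit fails as stated. A test function depending only on $t$ satisfies $\mathcal{J}[\varphi]\equiv0$. It therefore sees $\theta/\mu-\gamma$ pointwise in $x$ and yields only $d_I$-independent bounds governed by maxima or minima in $x$, not by the spatial average. Recovering the average needs a spatially varying corrector of order $d_I^{-1}$. Citing the known result, as you primarily propose, is the right route.
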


\begin{proof}
	(i) By a similar argument as in \cite{arlin2026,arlin2026p}, we can prove (i). Indeed, set
	\[
	\mu_1: = \inf_{\varphi \in\mathcal{C}}\sup_{(x,t)\in\bar{\Omega}\times[0,T]} \frac{\theta(x,t)\varphi(x,t)}{\partial_t \varphi(x,t) - d_I\mathcal{J}[\varphi](x,t) + \gamma(x,t)\varphi(x,t)},
	\]
	and
	\[
	\mu_2: = \sup_{\varphi \in \operatorname{Int} C_T^+ \cap C^{0,1}\big(\bar{\Omega} \times \mathbb{R}\big)} \inf_{(x,t)\in\Omega_{\varphi}} \frac{\theta(x,t)\varphi(x,t)}{\partial_t \varphi(x,t) - d_I\mathcal{J}[\varphi](x,t) + \gamma(x,t)\varphi(x,t)}.
	\]
	It is easy to see that \(\mu_1\) and \(\mu_2\) can be equivalently written as
	\[
	\mu_1 = \inf \{\mu > 0 : \exists \varphi \in \operatorname{Int} C_T^+ \cap C^{0,1}\big(\bar{\Omega} \times \mathbb{R}\big) \text{ s.t. } \mathcal{M}_{\mu} [\varphi] \leq 0\},
	\]
	and
	\[
	\mu_2 = \sup \{\mu > 0 : \exists \varphi \in \operatorname{Int} C_T^+ \cap C^{0,1}\big(\bar{\Omega} \times \mathbb{R}\big) \text{ s.t. } \mathcal{M}_{\mu} [\varphi] \geq 0\}.
	\]
	Define
	\begin{equation*}
		\lambda_p(\mathcal{M}_{\mu}): = \sup \left\{ \lambda \in \mathbb{R} : \exists \varphi \in \operatorname{Int} C_T^+ \cap C^{0,1}\big(\bar{\Omega} \times \mathbb{R}\big), \text{ s.t. } \mathcal{M}_{\mu}[\varphi]  \ge \lambda\varphi \right\},
	\end{equation*}
	and
	\begin{equation*}
		\lambda_p'(\mathcal{M}_{\mu}): = \inf \left\{ \lambda \in \mathbb{R} : \exists \varphi \in \operatorname{Int} C_T^+ \cap C^{0,1}\big(\bar{\Omega} \times \mathbb{R}\big), \text{ s.t. } \mathcal{M}_{\mu}[\varphi] \le\lambda\varphi \right\} .
	\end{equation*}
	In view of \citep[Proposition 2.2]{Lin2024}, we have $s(\mathcal{M}_{\mu})=\lambda_p(\mathcal{M}_{\mu})=\lambda_p'(\mathcal{M}_{\mu})$. Obviously, $s(\mathcal{M}_{\mu})$ is non-increasing with respect to $\mu>0$. Furthermore, it is direct from the definition of $R_{0}$ that $R_{0}>0$. Then according to Lemma \ref{lemma2.3} (ii), we obtain that $s(\mathcal{M}_{\mu})=0$ admits the unique positive solution $R_{0}$. Therefore, $s(\mathcal{M}_{\mu})<0$ for all $\mu > R_{0}$ and $s(\mathcal{M}_{\mu})>0$ for all $0<\mu < R_{0}$. By the above arguments, for any \(\mu > R_{0}\), there exists $\psi\in\operatorname{Int} C_T^+ \cap C^{0,1}\big(\bar{\Omega} \times \mathbb{R}\big)$ such that $\mathcal{M}_{\mu} [\psi] \leq 0$, which indicates $R_{0}\geq \mu_1$. If $R_{0}> \mu_1$, it follows from the definition of $\mu_1$ that there exist some \(\mu' \in (\mu_1, R_{0})\) and \(\xi \in \operatorname{Int} C_T^+ \cap C^{0,1}\big(\bar{\Omega} \times \mathbb{R}\big)\) satisfying \(\mathcal{M}_{\mu'} [\xi] \leq 0\).  This, together with the definition of $\lambda_p'(\mathcal{M}_{\mu})$ and  $s(\mathcal{M}_{\mu})=\lambda_p'(\mathcal{M}_{\mu})$, demonstrates \(s(\mathcal{M}_{\mu'}) \leq 0\), a contradiction. Consequently, \(R_{0} = \mu_1\). Analogously, one can prove that \(R_{0} = \mu_2\).
	
The conclusion of (ii)  is directly obtained from \citep[Theorem 2.4]{Lin2023} or \citep[Theorem 2.6 (iii)]{Feng2025}.
To establish  (iii), (iv) and (v), we first show that  the continuity of $R_0$ on $\theta \in \operatorname{Int} C_T^+$, i.e.,
\begin{equation}\label{eq1}
\lim_{\|\theta-\theta_0\|\to 0} R_0(\theta)=R_0(\theta_0),
\end{equation}
where we write $R_0(\theta)$ to emphasize its dependence on $\theta$.
Let $\{\theta_n\}$ be any sequence converging to $\theta_0$.
For any $\varepsilon>0$, there exist $\phi_{1}, \phi_{2} \in \operatorname{Int} C_T^+ \cap C^{0,1}\big(\bar{\Omega} \times \mathbb{R}\big)$ such that
\[
\partial_t \phi_{1}(x,t) - d_I\mathcal{J}[\phi_{1}](x,t) + \gamma(x,t)\phi_{1}(x,t)\ge \frac{1}{R_{0}(\theta_0)+\varepsilon}\theta_{0}(x,t)\phi_{1}(x,t),
\]
and
\[
\partial_t \phi_{2}(x,t) - d_I\mathcal{J}[\phi_{2}](x,t) + \gamma(x,t)\phi_{2}(x,t)\le \frac{1}{R_{0}(\theta_0)-\varepsilon}\theta_{0}(x,t)\phi_{2}(x,t).
\]
Since $\left\| \theta_{n}-\theta_0\right\| \to 0$, there exists $n_{1}>0$
such that for all $n\ge n_{1}$,
\[
\partial_t \phi_{1}(x,t) - d_I\mathcal{J}[\phi_{1}](x,t) + \gamma(x,t)\phi_{1}(x,t)\ge \frac{1}{R_{0}(\theta_0)+2\varepsilon}\theta_{n}(x,t)\phi_{1}(x,t),
\]
and
\[
\partial_t \phi_{2}(x,t) - d_I\mathcal{J}[\phi_{2}](x,t) + \gamma(x,t)\phi_{2}(x,t)\le \frac{1}{R_{0}(\theta_0)-2\varepsilon}\theta_{n}(x,t)\phi_{2}(x,t).
\]
It follows that
\[
R_0(\theta_0)-2\varepsilon \le R_0(\theta_n) \le R_0(\theta_0)+2\varepsilon.
\]
By taking $\varepsilon\to0^+$, we obtain $\lim_{n\to\infty} R_0(\theta_n)=R_0(\theta_0)$. As $\{\theta_n\}$ is an arbitrary sequence, equality \eqref{eq1} holds. To finish the proof of assertions (iii), (iv) and (v), it remains to address the special case $\theta_n\to 0$ . In this situation, there exist constants $k_{n}$ such that $\left\| \theta_n \right\|\le k_{n} $ and $k_{n}\to 0$. It is easy to see that $R_{0}(\theta_n)\le R_{0}(k_{n})= k_{n}R_{0}(1)\to 0$.
This completes the proof.
\end{proof}

\begin{remark}
	Although Theorem {\rm\ref{theorem2.2} (i)} gives an expression for $R_0$, the monotonicity of $R_0$ in $d_{I}$ is not evident from this expression, and it turns out challenging to explore the monotonicity of $R_0$ with respect to $d_{I}$. For autonomous systems, Allen et al. {\rm\cite{Allen2008}} and Feng et al. {\rm\cite{Feng2026}} demonstrated that $R_0$ is a non-increasing function of $d_{I}$. However, for nonautonomous systems, Peng and Zhao {\rm\cite{Peng2012}} and Liu and Lou {\rm\cite{Liu2022}} pointed out that such monotonicity generally fails to hold. Recently, for time-periodic systems, Feng et al. {\rm\cite{Feng2025}} established the monotonicity of $R_0$ with respect to large dispersal rate $d_{I}$ by utilizing the expansions of the principal eigenvalue and the associated principal eigenfunction. Also, from Theorem {\rm\ref{theorem2.2} (iv)}, we see that as the saturation parameter tends to zero, the basic reproduction number reduces to that of the standard-incidence model.
\end{remark}

\begin{corollary}
	\label{corollary2.1}
	The following assertions hold.
	\begin{itemize}
		
		\item[{\rm (i)}]
		$\dfrac{\int_{\Omega}\int_0^T \theta(x,t)dtdx}{\int_{\Omega}\int_0^T \gamma(x,t)dtdx}\leq R_0(d_{I}, N, m)\leq\max\limits_{x(\cdot) \in \mathcal{S}} \dfrac{\int_0^T \theta(x(t), t) \, dt}{\int_0^T \gamma(x(t), t) \, dt}$,~~where \[\mathcal{S} := \{x(\cdot) \in C(\mathbb{R}, \bar{\Omega}) : x(t) = x(t + T)\}{\rm;}
		\]
		
		\item[{\rm (ii)}] If $\int_{\Omega}\int_{0}^{T} \left( \theta(x,t)-\gamma(x,t)\right) \,dt\,dx> 0$, then $R_0(d_{I}, N, m)>1$ for all $d_{I}>0$, $N>0$, and $m\in C_{T}^{+}${\rm;}
		
		\item[{\rm (iii)}] If  $\int_{0}^{T}\max\limits_{\substack{x\in\bar{\Omega}}} \left( \theta(x,t)-\gamma(x,t)\right)\,dt<0$, then $R_0(d_{I}, N, m)<1$ for all $d_{I}>0$, $N>0$, and $m\in C_{T}^{+}${\rm;}

		\item[{\rm (iv)}] For fixed $d_I$ and $m(x,t)>0$, $R_0(d_{I}, N, m)$ is a strictly increasing function of $N$;
		
		\item[{\rm (v)}] For fixed $d_I$ and $N$, $R_0(d_{I}, N, m)$ is strictly decreasing with respect to the pointwise ordering of $m$, that is to say, if
		\(m_1(x,t) <m_2(x,t)\) for all $(x,t)\in \bar{\Omega} \times \mathbb{R}$, then \(R_0(d_{I}, N, m_1) > R_0(d_{I}, N, m_2)\).
	\end{itemize}
\end{corollary}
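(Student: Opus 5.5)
The plan is to read every assertion off the variational characterization in Theorem \ref{theorem2.2}(i), the sign criterion in Lemma \ref{lemma2.3}, and the limits in Theorem \ref{theorem2.2}(ii), using the following comparison principle. Write $R_0(\theta)$ for the spectral radius of $\mathcal{L}$ with weight $\theta$. If $\theta_1\le\theta_2$ pointwise, then $R_0(\theta_1)\le R_0(\theta_2)$, because $\mathcal{L}$ is a positive operator that is monotone in $\theta$. Alternatively, any admissible $\varphi\in\mathcal{C}$ gives a quotient in (i) that is larger for $\theta_2$.

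\textbf{Strict monotonicity.} I claim that if $\theta_2\ge(1+\delta)\theta_1$ for some $\delta>0$, then $R_0(\theta_2)\ge(1+\delta)R_0(\theta_1)>R_0(\theta_1)$. This holds because $R_0$ is positively homogeneous in $\theta$: from (i), $R_0(c\theta)=cR_0(\theta)$.
\begin{itemize}
\item For (iv), set $h(N)=N/|\Omega|$. Then $\theta=\beta h/(m+h)$, and $h/(m+h)$ is strictly increasing in $h$ when $m>0$. For $N_1<N_2$, the ratio $\theta_{N_2}/\theta_{N_1}=\frac{h_2(m+h_1)}{h_1(m+h_2)}$ is a continuous function that is $>1$ on the compact set $\bar\Omega\times[0,T]$, so it is $\ge 1+\delta$ there. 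This gives strict increase.
\item For (v), with $m_1<m_2$ (a strict inequality of continuous periodic functions), the same compactness argument gives $\theta_{m_1}\ge(1+\delta)\theta_{m_2}$, hence $R_0(d_I,N,m_1)>R_0(d_I,N,m_2)$.
\end{itemize}

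\textbf{Part (i), lower bound.} I will aim to show that $R_0(d_I)$ is at least the $d_I\to\infty$ limit for every $d_I$. The route is to test with $\mu=\bar R:=\iint\theta/\iint\gamma$ and show $s(\mathcal{M}_{\bar R})\ge0$. Integrate the eigen-relation for a positive periodic eigenfunction: take $\varphi$ with $\mathcal{M}_\mu\varphi=\lambda\varphi$, divide by $\varphi$, and integrate over $\Omega\times[0,T]$.
\begin{itemize}
\item The term $\partial_t\log\varphi$ integrates to zero by periodicity.
\item By the symmetry of $J$, $\int\int J(x-y,t)\bigl(\varphi(y)/\varphi(x)-1\bigr)\,dy\,dx\ge0$. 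This follows from $a/b+b/a\ge2$.
\end{itemize}
Since a principal eigenfunction may not exist, I will instead use the characterization $s=\lambda_p'$ from \citep[Proposition 2.2]{Lin2024}. For each $\lambda>s(\mathcal{M}_\mu)$ there is a $\varphi$ with $\mathcal{M}_\mu\varphi\le\lambda\varphi$. Dividing and integrating gives $\lambda|\Omega|T\ge\iint(\theta/\mu-\gamma)$, so $s(\mathcal{M}_\mu)\ge\frac{1}{|\Omega|T}\iint(\theta/\mu-\gamma)$. At $\mu=\bar R$ the right side is $0$, so $s(\mathcal{M}_{\bar R})\ge0$ and Lemma \ref{lemma2.3} yields $R_0\ge\bar R$.

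\textbf{Part (i), upper bound.} This is the main obstacle. My first idea was to use the $d_I\to0$ limit together with monotonicity in $d_I$, but that monotonicity is unavailable. Instead I will construct a supersolution. For $\mu>\max_{\mathcal{S}}\int\theta/\int\gamma$, I need $\varphi$ with $\mathcal{M}_\mu\varphi\le 0$ up to an arbitrarily small error. The plan is to invoke the known estimate $s(\mathcal{B}+\theta/\mu)\le\max_{x(\cdot)\in\mathcal{S}}\frac1T\int_0^T(\theta/\mu-\gamma)(x(t),t)\,dt$ for nonlocal time-periodic operators. The nonlocal term $d_I\mathcal{J}$ contributes at most $0$ in the maximum-principle sense: at a maximum point of the solution, $\mathcal{J}\le0$. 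This makes the growth of the evolution family bounded by that of the ODE along the worst periodic path. I would prove this bound by comparing the evolution family with $\exp\int(\theta/\mu-\gamma)$ using sup-norm estimates. The difficulty is that the maximizing location moves with $t$; that is exactly why the periodic-path maximum, rather than $\max_x$, appears. If this bound turns out to be too delicate, I would cite it from \cite{Zhang2021} or \cite{Feng2025}.

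\textbf{Parts (ii) and (iii).} These follow from (i) together with Lemma \ref{lemma2.3}(i) with $\mu=1$, or directly by the integration argument above with $\mu=1$.
\begin{itemize}
\item For (ii), the hypothesis means $\bar R>1$, so $R_0>1$.
\item For (iii), any path $x(\cdot)$ satisfies $\int_0^T(\theta-\gamma)(x(t),t)\,dt\le\int_0^T\max_x(\theta-\gamma)\,dt<0$, so the maximal ratio is $<1$ and hence $R_0<1$.
\end{itemize}
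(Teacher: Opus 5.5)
Your argument is correct. For (iv)--(v) it is exactly the paper's: a uniform factor $1+\delta$ obtained by compactness, plus homogeneity and monotonicity of $R_0$ in $\theta$ read off Theorem~\ref{theorem2.2}(i).

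The difference is in (i), which the paper does not prove itself; it cites \citep[Proposition 2.5]{Feng2025} with $\beta$ replaced by $\theta$. Your lower bound is sound and makes that half self-contained. You divide $\mathcal{M}_\mu[\varphi]\le\lambda\varphi$ by $\varphi$ and integrate over $\Omega\times[0,T]$. The term $\partial_t\log\varphi$ vanishes by periodicity and the nonlocal term is nonnegative by symmetrization. Then $s=\lambda_p'$ and Lemma~\ref{lemma2.3}(i) at $\mu=\bar R$ finish the argument.

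For the upper bound you fall back on the same citation, but the ``moving maximizer'' difficulty is not real, and the same machinery closes it. Set
\[
a=\theta/\mu-\gamma,\qquad \hat a(t)=\max_{x}a(x,t),\qquad \bar a=\tfrac1T\int_0^T\hat a\,dt .
\]
Take the $x$-independent test function $\varphi(t)=\exp\bigl(\int_0^t\hat a\,ds-\bar a t\bigr)\in\operatorname{Int}C_T^+\cap C^{0,1}$. It satisfies $\mathcal{J}[\varphi]=0$ because the dispersal is of Neumann type. Hence $\mathcal{M}_\mu[\varphi]=(a-\hat a+\bar a)\varphi\le\bar a\varphi$, which gives $s(\mathcal{M}_\mu)\le\frac1T\int_0^T\max_x a\,dt$. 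This is also all your sup-norm comparison yields, since you only need to bound pointwise in $t$ by $\max_x$; you never have to follow the maximizer along a continuous path.

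Periodic paths enter only through the elementary identity $\int_0^T\max_x f\,dt=\sup_{x(\cdot)\in\mathcal{S}}\int_0^T f(x(t),t)\,dt$. To prove it, join piecewise-constant near-maximizers by short transition arcs in the connected domain. With this identity, $\mu>\rho:=\sup_{\mathcal{S}}\int\theta/\int\gamma$ gives $\int_0^T\max_x a\,dt\le(\rho/\mu-1)T\min\gamma<0$, i.e.\ $R_0<\mu$.

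The same test function at $\mu=1$ gives (iii) at once. Your ``integration argument with $\mu=1$'' only bounds $s$ from below, so it covers (ii) but not (iii).
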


\begin{proof} Statement (i) is a direct consequence of \citep[Proposition 2.5]{Feng2025}.  And statements (ii) and (iii) follow directly from  (i).
	
	(iv) It is easy to see that,  for any $N_{1}>N_{2}$, there exists $\varepsilon=\varepsilon(N_{1},N_{2})$ such that
	\[
	\dfrac{\frac{N_{1}}{\left| \Omega \right| } \beta(x,t)}{m(x,t)+\frac{N_{1}}{\left| \Omega \right| }} > (1+\varepsilon)\dfrac{\frac{N_{2}}{\left| \Omega \right| } \beta(x,t)}{m(x,t)+\frac{N_{2}}{\left| \Omega \right| }}.
	\]
	It follows from Theorem \ref{theorem2.2} (i) that $R_{0}(d_{I},N_{1},m)\ge(1+\varepsilon)R_{0}(d_{I},N_{2},m) $.
	
	Statement (v) can be proved analogously to (iv).
\end{proof}

Define
\begin{align*}
	H^+ &= \left\{ x \in \bar{\Omega} : \frac{1}{T}\int_0^T\beta(x,t)dt > \frac{1}{T}\int_0^T\gamma(x,t)dt \right\}, \\
	H^- &= \left\{ x \in \bar{\Omega} : \frac{1}{T}\int_0^T\beta(x,t)dt < \frac{1}{T}\int_0^T\gamma(x,t)dt \right\}.
\end{align*}
In line with Corollary \ref{corollary2.1} (iv) and Theorem \ref{theorem2.2} (iii), we derive the following corollary.

\begin{corollary}
	\label{corollary2.2}
	The following statements hold.
	\begin{itemize}
		\item[{\rm (i)}] If $H^-=\Omega$, then there exists $d_{1}^*>0$ such that $R_0(d_{I}, N, m)<1$ for all $d_{I}\in(0,d_{1}^*)$, all $N>0$, and all $m\in C_{T}^+${\rm;}
		\item[{\rm (ii)}]
		If $H^{+}$ is nonempty, then there exists $d_{2}^{*}>0$ such that for any fixed $d_{I}\in(0,d_{2}^{*})$ and $m\in \operatorname{Int} C_{T}^{+}$, there exists a threshold $N^{*}(d_{I},m)>0$ satisfying
\begin{equation}\label{eq:threshold}
	\begin{cases}
		R_0(d_I,N,m)<1, & 0<N<N^*(d_I,m),\\
		R_0(d_I,N,m)=1, & N=N^*(d_I,m),\\
		R_0(d_I,N,m)>1, & N>N^*(d_I,m).
	\end{cases}
\end{equation}
In particular, if $H^+=\Omega$, then for any fixed
$d_I>0$ and $m\in \operatorname{Int} C_{T}^{+}$, the threshold property of
$R_0(d_I,N,m)$ with respect to $N$ remains valid.
	\end{itemize}
\end{corollary}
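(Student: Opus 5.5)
Both parts rest on the small-dispersal limit in Theorem \ref{theorem2.2} (ii), made uniform in $N$ and $m$ through the pointwise bound $0<\theta\le\beta$. Part (ii) then adds the monotonicity of Corollary \ref{corollary2.1} (iv) and the $N$-limits of Theorem \ref{theorem2.2} (iii). Throughout, write $R_0^\theta$ for the quantity defined with a general weight $\theta$; it is monotone in $\theta$ by the variational formula of Theorem \ref{theorem2.2} (i). Note also that $\theta=\beta\frac{N/|\Omega|}{m+N/|\Omega|}\le\beta$ for every $N>0$ and $m\in C_T^+$, with strict inequality when $m>0$.

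For (i), apply Theorem \ref{theorem2.2} (ii) with $\theta$ replaced by $\beta$, i.e. to $\hat R_0$, which does not depend on $N$ or $m$. This gives $\lim_{d_I\to0}\hat R_0(d_I)=\max_{x\in\bar\Omega}\int_0^T\beta\,dt/\int_0^T\gamma\,dt$. Since $H^-=\Omega$ means the averaged $\beta$ is strictly below the averaged $\gamma$ at every point, and this holds on $\bar\Omega$ as well (which the hypothesis should be read as, by continuity and compactness), the maximum is strictly less than $1$. Hence there is $d_1^*>0$ with $\hat R_0(d_I)<1$ for $d_I<d_1^*$. Then $\theta\le\beta$ and the monotonicity of $R_0$ in the weight give $R_0(d_I,N,m)\le \hat R_0(d_I)<1$ for all $N$ and $m$.

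For (ii), fix $x_0\in H^+$. Again by Theorem \ref{theorem2.2} (ii) applied to $\hat R_0$, $\lim_{d_I\to0}\hat R_0=\max_x\int\beta/\int\gamma>1$, so there is $d_2^*$ with $\hat R_0(d_I)>1$ for $d_I<d_2^*$. Now fix such a $d_I$ and $m\in\operatorname{Int}C_T^+$. By Corollary \ref{corollary2.1} (iv), $N\mapsto R_0(d_I,N,m)$ is strictly increasing. By Theorem \ref{theorem2.2} (iii) it tends to $0$ as $N\to0$ and to $\hat R_0(d_I)>1$ as $N\to\infty$. Continuity in $N$ follows from the continuity \eqref{eq1} of $R_0$ in $\theta$, since $N\mapsto\theta$ is continuous into $C_T$ when $m>0$. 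The intermediate value theorem and strict monotonicity then give a unique $N^*(d_I,m)$ with $R_0=1$, and \eqref{eq:threshold} follows. When $H^+=\Omega$, Corollary \ref{corollary2.1} (i) with $\beta$ in place of $\theta$ gives $\hat R_0\ge \int_\Omega\!\int_0^T\beta/\int_\Omega\!\int_0^T\gamma>1$ for every $d_I>0$. Indeed, $H^+=\Omega$ forces the integrated difference $\int_\Omega\int_0^T(\beta-\gamma)$ to be positive. So the same argument works with no smallness restriction on $d_I$.

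The main obstacle is justifying the uniformity and limit facts for $\hat R_0$, i.e. that Theorem \ref{theorem2.2} (ii)–(iii) and Corollary \ref{corollary2.1} (i) are valid with $\theta$ replaced by $\beta$. Their proofs use only that the weight is a positive $T$-periodic continuous function, so they apply verbatim. The remaining steps are routine, the delicate point being the strict inequality in the max at the $H^-$ boundary.
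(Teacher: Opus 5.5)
Your proposal is correct and follows essentially the same route as the paper. For (i) you bound $R_0\le\hat R_0$ via $\theta\le\beta$ and use the small-$d_I$ limit of $\hat R_0$. For (ii) you combine the strict monotonicity in $N$ (Corollary \ref{corollary2.1} (iv)) with the $N$-limits of Theorem \ref{theorem2.2} (iii), and use the averaged lower bound on $\hat R_0$ when $H^+=\Omega$.

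You also fill in two points the paper leaves implicit. First, you invoke the continuity \eqref{eq1} to get continuity of $R_0$ in $N$, which the intermediate value argument needs. Second, you note that (i) needs the hypothesis read as $H^-=\bar\Omega$ so that the maximum over $\bar\Omega$ is strictly below $1$.
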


\begin{proof}
(i) If $H^-=\Omega$, it follows from \citep[Theorem 2.6 (iii)]{Feng2025} that
\[
\lim\limits_{d_I\to 0} \hat{R}_0
= \max\limits_{x\in\bar{\Omega}} \dfrac{\int_0^T \beta(x,t)dt}{\int_0^T \gamma(x,t)dt}<1.
\]
Hence, there exists $d_{1}^*>0$ such that $\hat{R}_0<1$ for all $d_{I}\in(0,d_{1}^*)$. Since $\theta\le \beta$, by Theorem \ref{theorem2.2} (i), we  obtain that $R_0(d_{I}, N, m)\le\hat{R}_0 $ for all $d_{I}\in(0,d_{1}^*)$, $N>0$, and $m\in  C_{T}^{+}$. Statement (i) is thus proved. 

(ii) If $H^{+}$ is nonempty, we have  $\max\limits_{x\in\bar{\Omega}}\frac{1}{T}\int_0^T(\beta(x,t)-\gamma(x,t))dt>0$. In view of \citep[Theorem 2.6 (iii)]{Feng2025}, it follows that
\[
\lim\limits_{d_I\to 0} \hat{R}_0
= \max\limits_{x\in\bar{\Omega}} \dfrac{\int_0^T \beta(x,t)dt}{\int_0^T \gamma(x,t)dt}>1,
\]
which indicates that there exists $d_{2}^*>0$ such that $\hat{R}_0>1$ for all $d_{I}\in(0,d_{2}^*)$. Combining Corollary \ref{corollary2.1} (iv) and Theorem \ref{theorem2.2} (iii), for any
fixed $d_{I}\in(0,d_{2}^{*})$ and $m\in \operatorname{Int} C_{T}^{+}$, there exists a threshold $N^{*}(d_{I},m)>0$ satisfying (\ref{eq:threshold}). In particular, if $H^+=\Omega$, it is direct from \citep[Proposition 2.5]{Feng2025} that
\[
\hat{R}_{0}\geq\dfrac{\int_{\Omega}\int_0^T \beta(x,t)dtdx}{\int_{\Omega}\int_0^T \gamma(x,t)dtdx}>1.
\]
Then the desired result follows immediately from Corollary \ref{corollary2.1} (iv) and Theorem \ref{theorem2.2} (iii).
\end{proof}

\section{The existence, uniqueness and global attractivity of the equilibria}
	\label{section 3}
	
In this section, we explore the long-time behavior of system (\ref{model1.2}). It is evident that $\frac{SI}{m(x,t)+S+I}$ is a Lipschitz continuous function of $S$ and $I$ in the open first quadrant. We extend its definition to the entire first quadrant by setting it to be zero when $SI=0$. Applying the standard semigroup theory developed in \cite{Pazy1983} yields that, for any $(S_0, I_0) \in X_+ \times X_+$, system (\ref{model1.2}) admits a unique nonnegative solution $(S(x,t), I(x,t))$ for $(x,t) \in \Omega \times [0, T_{\max})$, where $T_{\max}$ denotes the maximal existence time.  Moreover, this solution is continuous in both $x$ and $t$. By replicating the argument of \cite{Lin2023}, one can show that $T_{\max}=+\infty$, and under assumption (A3), we also have $S(x, t)>0$ and $I(x, t)>0$. Therefore, we obtain the following proposition.

\begin{proposition}
	\label{proposition3.1}
	For any $(S_0, I_0) \in X_+ \times X_+$,  system {\rm(\ref{model1.2})} admits a unique  solution $(S(x, t), I(x, t))$ for $(x, t) \in \bar{\Omega} \times [0, +\infty)$, with $S(x, t)>0$ and $I(x, t)>0$.
\end{proposition}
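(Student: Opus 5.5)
We prove Proposition \ref{proposition3.1} in three steps: local existence, global existence via an a priori bound from conservation of mass, and strict positivity.

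\textbf{Step 1: local existence and nonnegativity.} Write system (\ref{model1.2}) as an abstract ODE $U'=\mathcal{A}(t)U+F(x,t,U)$ on $X\times X$. Here $\mathcal{A}(t)U=(d_S\mathcal{J}[S],d_I\mathcal{J}[I])$ is a bounded linear operator on $X\times X$, since $J$ is continuous and $\Omega$ is bounded. The reaction term is $F=(-g+\gamma I,\, g-\gamma I)$ with $g=\beta SI/(m+S+I)$, extended by zero on the axes. Note that $|g|\le \beta\min\{S,I\}$, so $F$ is locally Lipschitz on the closed quadrant. The Picard iteration (or \cite{Pazy1983}) then gives a unique continuous mild solution on $[0,T_{\max})$, which is classical in $t$ because all operators are bounded.

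For nonnegativity, write the $S$-equation as
\[
S_t=d_S\!\int_\Omega J(x-y,t)S(y,t)\,dy-\Big(d_S\!\int_\Omega J(x-y,t)\,dy+\tfrac{\beta I}{m+S+I}\Big)S+\gamma I,
\]
and similarly $I_t=d_I\int_\Omega J I\,dy-a(x,t)I$ with $a$ bounded. The quadrant is positively invariant by the quasi-positivity of $F$ (a standard comparison or variation-of-constants argument with a large shift constant $K$). Hence $S,I\ge0$.

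\textbf{Step 2: global existence.} Mass conservation gives $\int_\Omega(S+I)\,dx=N$, but this is not an $L^\infty$ bound. To obtain one, set $W=S+I\ge0$. Then
\[
W_t\le \max\{d_S,d_I\}\,\|J\|_\infty (S+I)\text{-integral terms}\le C N,
\]
because $\int_\Omega J(x-y,t)S(y,t)\,dy\le \|J\|_\infty N$ and the loss terms are nonpositive. Hence $\|W(\cdot,t)\|_\infty\le \|S_0+I_0\|_\infty+CNt$ on $[0,T_{\max})$. This bound is finite on bounded intervals, so the solution cannot blow up and $T_{\max}=+\infty$. (A uniform bound would follow from $W_t\le C N-cW$, but it is not needed here.)

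\textbf{Step 3: strict positivity.} This is the main obstacle, since $I_0$ may vanish on a large set and positivity must be propagated by the nonlocal operator. From $I_t\ge d_I\int_\Omega J I\,dy-KI$ we get
\[
I(x,t)\ge e^{-Kt}I_0(x)+\int_0^t e^{-K(t-s)}d_I\!\int_\Omega J(x-y,s)I(y,s)\,dy\,ds.
\]
Since $J(0,t)>0$, $J$ is continuous and periodic, there is $\delta>0$ with $J(z,t)\ge c>0$ for $|z|\le\delta$. Because $\int_\Omega I_0>0$, $I$ is positive on an open set for small times. Iterating the inequality spreads positivity to every $\delta$-neighborhood, and connectedness and boundedness of $\Omega$ make finitely many steps cover $\bar\Omega$. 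This gives $I(x,t)>0$ for all $t>0$; a careful version of the iteration yields positivity at every $t>0$, not just large $t$, because each step costs arbitrarily small time. Then $S_t\ge d_S\int_\Omega JS\,dy-KS+\gamma I$ with $\gamma I>0$ gives $S(x,t)\ge\int_0^t e^{-K(t-s)}\gamma I\,ds>0$ for $t>0$. Positivity at $t=0$ itself cannot hold in general, so the statement is read for $t>0$, as in \cite{Lin2023}.
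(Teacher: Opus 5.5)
Your proof is correct and follows essentially the same route as the paper. The paper obtains local existence from standard semigroup theory after extending the incidence term by zero on the axes, and defers global existence and strict positivity to the argument of \cite{Lin2023}; that positivity holds under (A3), and, as you rightly note, only for $t>0$. Your Steps 2 and 3 make that deferred argument explicit.

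One small correction: the Lipschitz property of $g$ comes from $0\le \partial_S g,\ \partial_I g\le \beta$ on the open quadrant, which holds even where $m=0$. It does not follow from $|g|\le\beta\min\{S,I\}$, which only shows that the zero extension is continuous.
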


In the following subsections, we consider the time-periodic problem of system (\ref{model1.2}) as follows
\begin{equation}
	\label{eq3.3}
	\begin{cases}
		\dfrac{\partial S}{\partial t} = d_S \mathcal{J} [S] - \dfrac{\beta(x,t)S I}{m(x,t)+S + I} + \gamma(x,t)I, & x \in \Omega, t \in \mathbb{R}, \\
		\dfrac{\partial I}{\partial t} = d_I \mathcal{J} [I] + \dfrac{\beta(x,t)S I}{m(x,t)+S + I} - \gamma(x,t)I, & x \in \Omega, t \in \mathbb{R}, \\
		S(x,t) = S(x,t + T), \quad I(x,t) = I(x,t + T), & x \in \Omega, t \in \mathbb{R}, \\
		\int_{\Omega} \bigl(S_0(x) + I_0(x)\bigr)\,dx = N.
	\end{cases}
\end{equation}

\begin{definition}
A solution $(S^*, I^*)$ of system {\rm(\ref{eq3.3})} is said to be globally attractive provided that for any initial data $(S_0, I_0) \in X_+ \times X_+$, it satisfies
\[
\lim_{t \to +\infty} \left\| (S(\cdot, t; S_0), I(\cdot, t; I_0)) - (S^*(\cdot, t), I^*(\cdot, t)) \right\|_{X \times X} = 0,
\]
where $(S(x, t; S_0), I(x, t; I_0))$ is the solution of system {\rm(\ref{model1.2})}.
\end{definition}

\subsection{The disease-free equilibrium}
In this subsection, we will give the existence for the solution $(S^{*}(x,t),I^{*}(x,t))$ of system (\ref{eq3.3}) with $S^{*}(x,t) > 0$ and  $I^{*}(x,t) = 0$, which is named as the disease-free equilibrium of system (\ref{model1.2}). Moreover, we establish the global attractivity of this equilibrium under two scenarios. Consider the following equation
\begin{equation}
	\label{eq3.4}
	\dfrac{\partial u}{\partial t} = d_I \mathcal{J} [u] + e_{1}(x, t) u - \gamma(x, t) u - e_{2}(x, t) u^2, ~~x \in \Omega, \; t > 0,
\end{equation}
where \( e_{1}(x, t) \) and \( e_{2}(x, t) \) are all continuous and time \( T \)-periodic functions with \( e_{2}(x, t) > 0 \) on \( \bar{\Omega} \times \mathbb{R} \). In line with \cite[Theorem 3.9]{Lin2026}, \citep[Theorem E]{Rawal2012} or \citep[Theorem B]{Shen2019}, we have the following conclusion.

\begin{lemma}
		\label{lemma3.2}
	The following statements hold.
	\begin{itemize}
		\item[{\rm (i)}]
		If $s(\mathcal{B}+\tilde{\mathcal{F}})>0$, then system {\rm(\ref{eq3.4})} admits a unique time periodic solution \( u^* \in \operatorname{Int} C_T^+ \). Furthermore, \( u^* \) is globally asymptotically stable, that is to say, for any \( u_0 \in X_+ \setminus \{0\} \),
		\begin{equation*}
			\|u(\cdot, t; u_0) - u^*(\cdot, t)\|_X \to 0 \quad \text{as } t \to +\infty,
		\end{equation*}
	where $\tilde{\mathcal{F}}$ is defined by $\tilde{\mathcal{F}}[u](x,t) := e_{1}(x,t)u(x,t)${\rm;}
		\item[{\rm (ii)}]
		If $s(\mathcal{B}+\tilde{\mathcal{F}})\le0$, then system {\rm(\ref{eq3.4})} admits no solution in $\mathcal{X}_{+}$, and there holds
		\begin{equation*}
			\|u(\cdot, t; u_0)\|_X \to 0 \quad \text{as } t \to +\infty,
		\end{equation*}
		where $\mathcal{X}_{+}$ denotes the set of all bounded and measurable functions  $u:   \Omega\times \mathbb{R}\to \mathbb{R}$ that satisfy $u(x,t)=u(x,t+T)$ and $u(x,t)\ge 0$ for all $(x,t)$.
	\end{itemize}
\end{lemma}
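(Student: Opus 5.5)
The plan is to treat \eqref{eq3.4} as a $T$-periodic monostable logistic equation with nonlocal dispersal and apply the monotone dynamical systems framework, in the spirit of \cite[Theorem 3.9]{Lin2026}, \citep[Theorem E]{Rawal2012} and \citep[Theorem B]{Shen2019}.

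\textbf{Basic properties of the period map.} We first record the properties needed for that framework. Since $d_I\mathcal{J}$ is a bounded operator on $X$ and the nonlinearity is locally Lipschitz, \eqref{eq3.4} generates a local semiflow on $X_+$. The comparison principle for nonlocal parabolic equations holds, proved by the usual exponential-weight and minimum argument, and it makes this semiflow order preserving. Because $e_2>0$ is bounded below, every large constant $M$ is a supersolution, so solutions are global and ultimately bounded. The quadratic term also makes the semiflow strictly subhomogeneous: $u(\cdot,t;\kappa u_0)\ge \kappa u(\cdot,t;u_0)$ for $\kappa\in(0,1)$, with strict inequality whenever $u_0\ne0$. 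Finally, since $J(0,t)>0$, a solution starting from $u_0\in X_+\setminus\{0\}$ becomes strictly positive in finite time. Indeed, the positive kernel spreads positivity across $\bar\Omega$ in finitely many steps, each step covering a ball whose radius is fixed by $J$.

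\textbf{Part (i).} Let $P=u(\cdot,T;\cdot)$ be the Poincar\'e map. By \citep[Proposition 2.2]{Lin2024} applied to $\mathcal{B}+\tilde{\mathcal F}$, we have $s(\mathcal{B}+\tilde{\mathcal F})=\lambda_p=\lambda_p'$. If $s>0$, the definition of $\lambda_p$ gives $\varphi\in\operatorname{Int}C_T^+$ and $\lambda>0$ with $-\partial_t\varphi+d_I\mathcal J[\varphi]+(e_1-\gamma)\varphi\ge\lambda\varphi$. Then $\varepsilon\varphi$ is a strict $T$-periodic subsolution of \eqref{eq3.4} for small $\varepsilon$, because $\lambda\varepsilon\varphi\ge e_2\varepsilon^2\varphi^2$. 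A large constant $M$ is a supersolution. Monotone iteration of $P$ starting from $\varepsilon\varphi(\cdot,0)$ and from $M$ produces a minimal and a maximal periodic solution lying between these bounds.

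Uniqueness in $\operatorname{Int}C_T^+$ follows from subhomogeneity. Given two positive periodic solutions $u_1,u_2$, let $\kappa^*=\sup\{\kappa:\ u_1\ge\kappa u_2\}$. If $\kappa^*<1$, strict subhomogeneity together with the strong positivity step gives $u_1\ge(\kappa^*+\delta)u_2$, contradicting the choice of $\kappa^*$. Hence $u_1\ge u_2$, and by symmetry $u_1=u_2$. The point needing care is continuity: the periodic solutions come as limits of monotone sequences, and we need them to be continuous. This holds because $P$ is continuous on $X$ and the limit is a fixed point, so Dini-type arguments apply; alternatively, the two limits coincide by the subhomogeneity argument.

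For global attractivity, take $u_0\ne0$. By the positivity step we may assume $u_0\in\operatorname{Int}X_+$, so $\varepsilon\varphi(\cdot,0)\le u_0\le M$. Since the iterates of $P$ from both ends converge monotonically to $u^*(\cdot,0)$, comparison gives $P^n u_0\to u^*(\cdot,0)$, and continuous dependence on initial data extends this to all $t$. The genuine obstacle is that $P$ is not compact, so monotone iterates need not converge uniformly a priori. I would handle this by showing that the monotone limits are fixed points in $C(\bar\Omega)$ and then applying Dini's theorem, since monotone pointwise convergence of continuous functions to a continuous limit on a compact set is uniform. Alternatively, I would invoke the theorem of \citep{Rawal2012} directly.

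\textbf{Part (ii).} Suppose $s\le0$ and let $u\in\mathcal X_+$, $u\not\equiv0$, be a periodic solution. Comparing $u$ with the principal quantity $\lambda_p'$ through the sup-over-ratio argument gives $\lambda_p\ge\inf e_2u>0$ in a generalized sense, which contradicts $s\le0$. For decay, a solution with a large initial value is dominated by $M\varphi$, where $\varphi$ is a near-eigenfunction with $\lambda\le\varepsilon$. The solution is then nonincreasing along the periodic flow from the supersolution $M$. Its limit would be a nonnegative bounded periodic solution, hence zero by the nonexistence just established, and the convergence is uniform by Dini's theorem again.
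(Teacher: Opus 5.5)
Your outline is sound, and it does more than the paper, which gives no argument for this lemma and simply imports it from \cite[Theorem 3.9]{Lin2026}, \citep[Theorem E]{Rawal2012} and \citep[Theorem B]{Shen2019}. Your sketch is the scheme underlying those references: a strict periodic subsolution $\varepsilon\varphi$ from $\lambda_p=s>0$, a constant supersolution $M$, monotone iteration of the period map, uniqueness by subhomogeneity, and decay of the iterates of $M$ when $s\le0$. It makes the proof self-contained, but you then have to settle yourself the two technical points the references handle, and in both places the write-up leans on a step that does not work as stated.

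\textbf{Continuity of the monotone limits.} Your first justification is circular. Dini's theorem presupposes a continuous limit, and continuity of $P$ on $X$ says nothing about a limit not yet known to lie in $X$; since $P$ is not compact, this is exactly the difficulty. Your ``alternative'' is the right one, but it must be carried out in the bounded measurable class.
\begin{itemize}
\item Extend $P$ to $L^\infty(\Omega)$. It is continuous under bounded pointwise convergence, by a Gronwall bound on $\int_\Omega|u_n-u|\,dx$ followed by a pointwise Gronwall bound.
\item The increasing limit $\underline u$ from $\varepsilon\varphi(\cdot,0)$ and the decreasing limit $\bar u$ from $M$ are then periodic solutions in $\mathcal X_+$. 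Each time slice of $\underline u$ is lower semicontinuous, each slice of $\bar u$ is upper semicontinuous, and both are $\ge\varepsilon\min\varphi>0$.
\item For such functions, pointwise strict subhomogeneity gives no uniform margin, so run your $\kappa^*$ argument quantitatively. If $\kappa^*<1$, then $\kappa^*u_2$ is a subsolution with defect $e_2(1/\kappa^*-1)(\kappa^*u_2)^2\ge c>0$. Hence $u(\cdot,T;\kappa^*u_2(\cdot,0))\ge\kappa^*u_2(\cdot,T)+\eta$ for a constant $\eta>0$, which gives $u_1\ge(\kappa^*+\eta/\|u_2\|_\infty)u_2$.
\end{itemize}
Thus $\underline u=\bar u$ is both lower and upper semicontinuous, hence continuous (jointly, by the uniform Lipschitz bound in $t$). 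Only at this point does Dini give uniform convergence, and your squeezing argument for global attractivity then goes through.

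\textbf{Nonexistence in (ii).} A function in $\mathcal X_+$ is not an admissible test function for $\lambda_p$, which requires $\operatorname{Int}C_T^+\cap C^{0,1}$. So ``$\lambda_p\ge\inf e_2u$ in a generalized sense'' is unjustified, and you never show $\inf u>0$ for a merely measurable solution. A concrete repair:
\begin{itemize}
\item Redo the positivity-spreading step for $L^\infty$ data, using continuity of $x\mapsto\int_\Omega J(x-y,t)u(y,t)\,dy$ and periodicity, to get $\operatorname{ess\,inf}u=\delta>0$.
\item Since $\lambda_p'=s\le0$, choose $\varphi\in\operatorname{Int}C_T^+\cap C^{0,1}$ with $(\mathcal B+\tilde{\mathcal F})[\varphi]\le\epsilon\varphi$, where $\epsilon<\delta\min e_2$.
\item Set $\tau^*=\inf\{\tau>0:\tau\varphi\ge u\}$ and $w=\tau^*\varphi-u\ge0$. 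Then $\partial_t w\ge d_I\mathcal J[w]+(e_1-\gamma-\epsilon)w+u(e_2u-\epsilon)$, and the last term is bounded below by a positive constant.
\item Integrating over one period gives $w\ge c'>0$, so $(\tau^*-c'/\|\varphi\|_\infty)\varphi\ge u$, contradicting the definition of $\tau^*$.
\end{itemize}
Nonexistence must hold in $\mathcal X_+$ precisely because the limit of the iterates from $M$ is only upper semicontinuous. Once that is established, your decay argument is correct: the limit is $0$, and Dini applies. The domination by $M\varphi$ is superfluous.
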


Following the same argument as in \cite[Proposition 3.2]{Lin2023}, we obtain the following conclusion.

\begin{proposition}
	\label{proposition3.2}
	System {\rm(\ref{model1.2})} admits a unique disease-free equilibrium \(\left(\frac{N}{|\Omega|}, 0\right)\).
\end{proposition}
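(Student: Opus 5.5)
We have to show that a disease-free periodic solution $(S^*,0)$ of \eqref{eq3.3} exists and is unique, and that it is $\left(\frac{N}{|\Omega|},0\right)$. The plan is to set $I\equiv 0$ in \eqref{eq3.3} and reduce to a scalar periodic nonlocal problem for $S$. With $I\equiv 0$, the incidence term vanishes (using the convention $SI=0\Rightarrow$ incidence $=0$), so $S^*$ must be a positive $T$-periodic solution of
\[
\partial_t S = d_S\mathcal{J}[S], \qquad x\in\Omega,\ t\in\mathbb{R},
\]
subject to $\int_\Omega S(x,t)\,dx = N$. By the conservation computation at the start of Section \ref{section 2}, this integral is constant in $t$.

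\textbf{Existence.} Any constant $c$ satisfies $\mathcal{J}[c]=0$. Therefore $S^*\equiv N/|\Omega|$, $I^*\equiv 0$ solves \eqref{eq3.3} with total mass $N$.

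\textbf{Uniqueness.} This is the main step. Let $S$ be any $T$-periodic solution. I would multiply the equation by $S$ and integrate over $\Omega$. Using the symmetry $J(x-y,t)=J(y-x,t)$ from (A1), we get
\[
\frac12\frac{d}{dt}\int_\Omega S^2\,dx = -\frac{d_S}{2}\int_\Omega\int_\Omega J(x-y,t)\bigl(S(y,t)-S(x,t)\bigr)^2\,dy\,dx \le 0 .
\]
Integrating over one period and using periodicity, the left side integrates to zero. Hence the double integral vanishes for all $t$. So for every $t$ we have $S(y,t)=S(x,t)$ whenever $J(x-y,t)>0$.

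Since $J(0,t)>0$ and $J$ is continuous, there is $\delta>0$, uniform in $t$ by periodicity and compactness, such that $J(z,t)>0$ for $|z|<\delta$. Thus $S(\cdot,t)$ is locally constant on $\Omega$. Because $\Omega$ is connected (a domain), $S(\cdot,t)$ is constant on $\Omega$, and by continuity on $\bar\Omega$.

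So $S(x,t)=c(t)$. The equation then gives $c'(t)=0$, and the mass constraint forces $c\equiv N/|\Omega|$.

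The only delicate point is the chain argument: passing from ``$S(x)=S(y)$ whenever $|x-y|<\delta$'' to global constancy on a connected open set. This is standard, since the set where $S(\cdot,t)=S(x_0,t)$ is both open and closed in $\Omega$. Alternatively, I could follow \cite[Proposition 3.2]{Lin2023} verbatim.
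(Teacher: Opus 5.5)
Your proof is correct and complete. It follows a somewhat different route from the paper. The paper gives no argument of its own, only the citation \cite[Proposition 3.2]{Lin2023}. The tool it uses for conclusions of this kind elsewhere (Lemma \ref{lemma3.4}, and \cite[Lemma 3.2]{Lin2023} in Theorem \ref{theorem3.2}) is quantitative. Set $\hat S=S-\frac{1}{|\Omega|}\int_\Omega S\,dx$. The nonlocal Poincar\'e constant $\alpha_0>0$ of Lemma \ref{lemma2.1} turns the dissipation identity into $\frac{d}{dt}\int_\Omega\hat S^2\,dx\le-2d_S\alpha_0\int_\Omega\hat S^2\,dx$; this is the $w(t)$ computation in the proof of Theorem \ref{theorem3.1} with $h\equiv0$. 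So the mean-zero part of every solution of $\partial_t S=d_S\mathcal{J}[S]$ decays exponentially in $L^2(\Omega)$, and a $T$-periodic solution must already equal its spatial mean.

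You start from the same dissipation identity but close it qualitatively. Integrating over one period forces the dissipation to vanish pointwise, and then $J(0,t)>0$ together with the connectedness of $\Omega$ gives spatial constancy.

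Your route is more elementary and self-contained. It needs only positivity of $J$ near the origin, not the coercivity estimate of Lemma \ref{lemma2.1}; connectedness of $\Omega$ is needed implicitly there as well, for $\alpha_0>0$. The cited route buys more: decay of arbitrary, non-periodic solutions of the linear nonlocal problem to their mean. That is exactly what the paper reuses in Theorems \ref{theorem3.2} and \ref{theorem3.4}.
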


Before proving the main results of this subsection, we recall a result that will play a key role in our proof.

\begin{lemma}{\rm (\citep[Proposition 2.4]{Lin2023})}.
	\label{lemma2.1}
	$\alpha(t)$ is continuous with respect to $t$, and there exists $\alpha_0 > 0$ such that
	\[
	\alpha_0 \leq \alpha(t) \leq \min_{x\in\bar{\Omega}} \int_{\Omega} J(x-y,t)\,dy,\quad t\in\mathbb{R}.
	\]
	Here
	\[
	\alpha(t) := \inf_{\substack{u\in L^2(\Omega),\,\int_{\Omega}u(x)dx=0,\,u\not\equiv 0}}
	\frac{\dfrac12 \displaystyle\int_{\Omega}\int_{\Omega} J(x-y,t)\big[u(y)-u(x)\big]^2 dxdy}
	{\displaystyle\int_{\Omega} u^2(x)dx},\quad t\in\mathbb{R}.
	\]
\end{lemma}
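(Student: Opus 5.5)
The plan is to rewrite the quadratic form in operator language and handle the three claims (upper bound, continuity, positive lower bound) separately. Set $a(x,t):=\int_{\Omega}J(x-y,t)\,dy$ and $K_t[u](x):=\int_{\Omega}J(x-y,t)u(y)\,dy$. By the symmetry $J(z,t)=J(-z,t)$ from (A1), expanding the square gives
\[
\frac12\int_{\Omega}\int_{\Omega}J(x-y,t)\big[u(y)-u(x)\big]^2dxdy=\int_{\Omega}a(x,t)u^2(x)\,dx-\int_{\Omega}K_t[u](x)\,u(x)\,dx .
\]
So $\alpha(t)$ is the bottom of the spectrum of the self-adjoint operator $L_t:=a(\cdot,t)-K_t$, restricted to the closed subspace $L^2_0(\Omega)=\{u\in L^2(\Omega):\int_\Omega u=0\}$. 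Since $J$ is continuous and $\Omega$ is bounded, $K_t$ is Hilbert--Schmidt, hence compact.

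\emph{Upper bound.} Fix $t$ and choose $x_0\in\bar\Omega$ with $a(x_0,t)=\min_{\bar\Omega}a(\cdot,t)$. Test with $u_\varepsilon=\chi_{B_\varepsilon(x_0)\cap\Omega}-|B_\varepsilon(x_0)\cap\Omega|/|\Omega|$, which has mean zero.
\begin{itemize}
\item The normalized functions $u_\varepsilon/\|u_\varepsilon\|$ converge weakly to $0$.
\item By compactness of $K_t$, this gives $\langle K_tu_\varepsilon,u_\varepsilon\rangle/\|u_\varepsilon\|^2\to0$.
\item By continuity of $a$, $\langle a u_\varepsilon,u_\varepsilon\rangle/\|u_\varepsilon\|^2\to a(x_0,t)$.
\end{itemize}
Hence $\alpha(t)\le\min_{x}a(x,t)$.

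\emph{Continuity.} For $u\in L^2_0(\Omega)$ we have $\int\!\!\int(u(y)-u(x))^2dxdy=2|\Omega|\|u\|^2-2(\int u)^2=2|\Omega|\|u\|^2$. Therefore the Rayleigh quotients at times $t$ and $s$ differ by at most $|\Omega|\sup_{z\in\Omega-\Omega}|J(z,t)-J(z,s)|$, uniformly in $u$. Taking infima gives
\[
|\alpha(t)-\alpha(s)|\le|\Omega|\sup_{z\in\Omega-\Omega}|J(z,t)-J(z,s)|\to0 \quad\text{as } s\to t,
\]
by uniform continuity of $J$ on compacts. Since $J$ is $T$-periodic, $\alpha$ is also $T$-periodic.

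\emph{Lower bound.} It then suffices to show $\alpha(t)>0$ for each fixed $t$, and to set $\alpha_0:=\min_{[0,T]}\alpha>0$. I expect this to be the main step. The first bullet below is the crux: it requires a connectedness argument and the positivity $J(0,t)>0$.
\begin{itemize}
\item \emph{Kernel of $L_t$.} $L_t\ge0$, and its kernel consists of the constants. If the quadratic form vanishes, then $u(x)=u(y)$ a.e.\ whenever $|x-y|<\delta$, where $J(z,t)>0$ for $|z|<\delta$ by continuity and $J(0,t)>0$. Connectedness of the domain $\Omega$ then forces $u$ to be constant.
\item \emph{Essential spectrum.} Since $K_t$ is compact, Weyl's theorem gives $\sigma_{ess}(L_t)=\sigma_{ess}(a(\cdot,t))=a(\bar\Omega,t)\subset[\min a,\infty)$, and $\min a>0$ because $J(0,t)>0$.
\item \emph{Spectral gap on $L^2_0$.} $L_t$ commutes with the projection onto constants, since $L_t1=0$ and $L_t$ is self-adjoint. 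Hence $L^2_0$ is invariant, and the bottom of $\sigma(L_t|_{L^2_0})$ is either $\ge\min a>0$ or an isolated eigenvalue of finite multiplicity. Such an eigenvalue cannot be $0$, because its eigenfunction would be a nonzero constant of mean zero. So $\alpha(t)>0$.
\end{itemize}
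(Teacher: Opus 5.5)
Your proof is correct. The quadratic-form identity, the concentrating mean-zero test functions for the upper bound, and the uniform estimate $|\alpha(t)-\alpha(s)|\le|\Omega|\sup_{z\in\Omega-\Omega}|J(z,t)-J(z,s)|$ all hold. That estimate comes from $\int_\Omega\int_\Omega(u(y)-u(x))^2\,dx\,dy=2|\Omega|\,\|u\|^2$ on mean-zero functions. The fixed-$t$ positivity also holds; its one unstated step, $\sigma_{ess}(L_t|_{L^2_0})\subset\sigma_{ess}(L_t)$, is immediate from $L_t=L_t|_{L^2_0}\oplus 0$.

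The paper gives no argument of its own and simply imports the statement from Proposition 2.4 of Lin and Wang. The usual route for this kind of result is to invoke the autonomous Neumann nonlocal-diffusion result at each fixed $t$, then obtain $\alpha_0$ from continuity and $T$-periodicity. That matches your overall structure.

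The difference lies in the positivity step. The standard argument is a contradiction with a normalized minimizing sequence $u_n\rightharpoonup u$ in $L^2_0$:
\begin{itemize}
\item compactness of $K_t$ and weak lower semicontinuity of the form show that $u$ is a mean-zero constant, hence $u=0$;
\item this contradicts $\int_\Omega a u_n^2\ge\min a>0$.
\end{itemize}
Your spectral version uses exactly the same two ingredients, compactness of $K_t$ and $\min a>0$, packaged as $\sigma_{ess}(L_t)\subset[\min a,\infty)$ via Weyl's theorem. It is shorter and also shows that $\alpha(t)$ is an attained eigenvalue whenever $\alpha(t)<\min_x a(x,t)$. The sequential version, by contrast, avoids spectral theory entirely.

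Your continuity estimate is also quantitative: $\alpha$ inherits the modulus of continuity of $J$ in $t$.
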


Now, we present the main results of this subsection.

\begin{theorem}
	\label{theorem3.1}
If $\beta(x,t)\leq\gamma(x,t)$ for all $(x, t) \in \bar{\Omega} \times [0, T]$, then the disease-free equilibrium \(\left(\frac{N}{|\Omega|}, 0\right)\) is globally attractive.
\end{theorem}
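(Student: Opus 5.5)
Under the hypothesis $\beta\le\gamma$, the $I$-equation admits a simple energy-type argument, and $S$ is then controlled through the conservation law. The plan has three steps.

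\textbf{Step 1: $I\to 0$.} Since $S/(m+S+I)<1$ and $\beta\le\gamma$,
\[
\partial_t I \le d_I\mathcal{J}[I] + \Bigl(\frac{\beta S}{m+S+I}-\gamma\Bigr)I \le d_I\mathcal{J}[I] - \frac{\beta I^2}{m+S+I}.
\]
The natural comparison is with the logistic-type equation \eqref{eq3.4} with $e_1=\gamma$ and $e_2=\beta/(m+M)$, where $M$ bounds $S+I$. Then $\mathcal{B}+\tilde{\mathcal{F}}=-\partial_t+d_I\mathcal{J}$ has spectral bound $0$, since constants are eigenfunctions. Lemma \ref{lemma3.2}(ii) and comparison would then give $I\to0$.

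A uniform bound $M$ on $S+I$ is not immediate in the nonlocal setting. So I would instead argue directly with $\int_\Omega I\,dx$. Integrating the $I$-equation and using the symmetry of $J$,
\[
\frac{d}{dt}\int_\Omega I\,dx = \int_\Omega\Bigl(\frac{\beta S}{m+S+I}-\gamma\Bigr)I\,dx \le -\int_\Omega \frac{\beta I(m+I)}{m+S+I}\,dx \le -\int_\Omega\frac{\beta I^2}{m+S+I}\,dx .
\]
Hence $\int_\Omega I\,dx$ is nonincreasing and converges. To conclude that the limit is zero, I need an upper bound on $S$.

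\textbf{Step 2: bounding $S$.} $S$ satisfies $\partial_t S\le d_S\mathcal{J}[S]+\gamma I$, and $\int_\Omega(S+I)\,dx=N$. The variation-of-constants formula for the nonlocal operator (a bounded perturbation of $-a(x,t)$ with $a\ge\alpha_0$ by Lemma \ref{lemma2.1}) would give uniform-in-time $L^\infty$ bounds on $S$ and $I$, by the argument in \cite{Lin2023}. With $S+I\le M$, Barbalat's lemma shows $\int_\Omega I^2\,dx\to0$. Because $\mathcal{J}$ is a bounded operator, $I$ is bounded in $C$ with a bounded time derivative. The pointwise equation for $I$, written as $I(t)=$ decaying part plus a convolution of $\int_\Omega JI$, then yields $\|I(\cdot,t)\|_X\to0$.

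\textbf{Step 3: $S\to N/|\Omega|$.} Let $w=S-N/|\Omega|$. Then $\int_\Omega w\,dx=-\int_\Omega I\,dx\to0$ and
\[
\partial_t w=d_S\mathcal{J}[w]+g,\qquad \|g(\cdot,t)\|_X\to0 .
\]
\begin{itemize}
\item \emph{$L^2$ convergence.} Decompose $w=\bar w+\tilde w$ with $\tilde w$ of mean zero. By Lemma \ref{lemma2.1},
\[
\frac{d}{dt}\tfrac12\|\tilde w\|_2^2\le -d_S\alpha_0\|\tilde w\|_2^2+o(1)\|\tilde w\|_2 ,
\]
so $\|w\|_{L^2}\to0$.
\item \emph{Upgrade to sup norm.} Write $\mathcal{J}[w]=\int_\Omega Jw\,dy-a(x,t)w$ with $a\ge\alpha_0$. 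The term $\int_\Omega J w\,dy$ is bounded by $\|J\|_\infty|\Omega|^{1/2}\|w\|_2\to0$. The resulting ODE in $t$ for each $x$, with damping $d_Sa\ge d_S\alpha_0$, then forces $\|w(\cdot,t)\|_X\to0$.
\end{itemize}

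\textbf{Main obstacle.} I expect the hardest step to be the uniform $L^\infty$ bound and the passage from integral to sup-norm decay. Nonlocal dispersal gives no smoothing, so each sup-norm conclusion must come from the pointwise ODE structure combined with $L^2$ decay of the convolution term, exactly as in the final part of Step 3. The same device is what I would use to upgrade the decay of $I$ in Step 2.
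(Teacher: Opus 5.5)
Your proposal is correct. Step 3 is essentially the paper's argument: $L^2$ decay of the mean-zero part of $S$ via Lemma~\ref{lemma2.1}, then the upgrade to the sup norm through the pointwise variation-of-constants formula with damping at least $d_S\alpha_0$. The difference lies in how you obtain $I\to 0$.

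First, the uniform bound, which you flag as the main obstacle, is immediate. You already wrote the inequality it comes from: dropping the quadratic term gives $\partial_t I\le d_I\mathcal{J}[I]$. So the constant $\max_{\bar\Omega}I_0$ is a super-solution and $I\le\max I_0$, which is the paper's first step. The bound on $S$ then follows from variation of constants. The nonlocal term is controlled by mass conservation, $\int_\Omega J(x-y,t)S(y,t)\,dy\le\|J\|_\infty N$, and the damping $d_S\int_\Omega J(x-y,t)\,dy$ is at least $d_S\alpha_0$. You need these bounds anyway, both for a positive lower bound on your dissipation coefficient and for Barbalat, so the detour through $\int_\Omega I$ does not avoid them.

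With $S+I\le 2C_0$ in hand, the paper compares $I$ with the spatially constant solution of $\bar I'=-\beta_1\bar I^2/(\|m\|_{C_T}+2C_0)$, $\bar I(0)=\max I_0$. Since $\mathcal{J}$ annihilates constants, this gives the explicit uniform decay $\|I(\cdot,t)\|_X=O(1/t)$ in one line, with no $L^2$-to-$L^\infty$ upgrade. Your route is valid but longer and yields no rate: monotonicity of $\int_\Omega I$, Barbalat to get $\|I\|_{L^2}\to0$, then the pointwise upgrade from $\partial_t I\le d_I\int_\Omega JI\,dy-d_I\bigl(\int_\Omega J\,dy\bigr)I$.

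If you keep your route, you can drop Barbalat. Cauchy--Schwarz turns your dissipation inequality into $\frac{d}{dt}\int_\Omega I\le -c|\Omega|^{-1}\bigl(\int_\Omega I\bigr)^2$, which gives $L^1$ decay at rate $1/t$. $L^1$ decay already suffices for the upgrade, since $I\ge0$ gives $\int_\Omega JI\,dy\le\|J\|_\infty\int_\Omega I$.

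Your first idea, comparison with \eqref{eq3.4} with $e_1=\gamma$ in the critical case $s(\mathcal{B}+\tilde{\mathcal{F}})=0$ of Lemma~\ref{lemma3.2}(ii), also works once the bound is available. The explicit ODE super-solution simply makes the spectral lemma unnecessary.
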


\begin{proof}
Let \((S(x,t), I(x,t))\) be the solution of system (\ref{model1.2}). Since $\beta(x,t)\leq\gamma(x,t)$, we have
\begin{equation*}
	\dfrac{\partial I}{\partial t} \leq d_I \mathcal{J} [I] + (\beta(x,t)-\gamma(x,t))I\leq d_I \mathcal{J} [I].
\end{equation*}
Consider the following auxiliary problem
\begin{equation}
	\label{eq3.5}
\begin{cases}
	\dfrac{\partial u}{\partial t} = d_I \mathcal{J} [u], & x \in \Omega, \; t > 0, \\
	u(x, 0) = I_0(x), & x \in \Omega.
\end{cases}
\end{equation}
It is commonly recognized that (\ref{eq3.5}) admits a unique solution $u(x,t)$. Moreover, it is obvious that $I$ is a sub-solution of (\ref{eq3.5}) and the constant \( \max\limits_{x \in \bar{\Omega}} I_0(x) \) is a super-solution of (\ref{eq3.5}). By applying the comparison principle, we get
 \[
 I(x, t) \leq u(x, t) \leq \max_{x \in \bar{\Omega}} I_0(x), \quad \forall (x, t) \in \bar{\Omega} \times [0, +\infty).
 \]
For the first equation of system (\ref{model1.2}), it is direct from the constant-variation formula that
\begin{equation}
	\label{eq3.6}
S(x, t) = e^{-\vartheta(x, t, 0)} S(x, 0) + \int_0^t e^{-\vartheta(x, t, s)} g(x, s) \, ds,
\end{equation}
where
\[
\vartheta(x, t, s) = d_S \int_s^t \int_{\Omega} J(x - y, r) \, dy \, dr,
\]
and
\[
g(x, t) = d_S \int_{\Omega} J(x - y, t) S(y, t) \, dy -\frac{\beta(x, t) SI}{m(x,t)+S + I}+\gamma(x,t)I.
\]
Notice that
\begin{equation}
		\label{eq3.7}
\begin{aligned}
	|g(x, t)|
	&\leq d_S \|J\|_{C(\mathbb{R}^n \times \mathbb{R})} \int_{\Omega} S(x, t) \, dx + \beta(x, t) I(x, t) +\gamma(x, t) I(x, t)\\
	&\leq d_S N \|J\|_{C(\mathbb{R}^n \times \mathbb{R})} + \|\beta+\gamma\|_{C_T} \max_{x \in \bar{\Omega}} I_0(x), \quad \forall (x, t) \in \bar{\Omega} \times \mathbb{R}.
\end{aligned}
\end{equation}
According to Lemma \ref{lemma2.1}, it follows that
\begin{equation*}
	\label{a}
	\int_{\Omega} J(x - y, t) \, dy \geq \alpha_0, \quad \forall x \in \bar{\Omega}, \; t \in \mathbb{R}.
\end{equation*}
Then $\vartheta(x, t, s)\geq d_S\alpha_0(t-s)$. Combining (\ref{eq3.6}) and (\ref{eq3.7}), we have
\[
\begin{aligned}
	|S(x, t)|
	&\leq e^{-d_S \alpha_0 t} S(x, 0) + \int_{0}^{t} e^{-d_S \alpha_0 (t-s)} |g(x, s)| \, ds \\
	&\leq \max_{x \in \bar{\Omega}} S_0(x) + \frac{d_S N \|J\|_{C(\mathbb{R}^n \times \mathbb{R})} + \|\beta+\gamma\|_{C_T} \max_{x \in \bar{\Omega}} I_0(x)}{d_S \alpha_0},
	\quad \forall (x, t) \in \bar{\Omega} \times \mathbb{R}_+.
\end{aligned}
\]
By the above argument, we obtain that there exists some positive constant $C_{0}$ independent of $t$ such that
\begin{equation}
	\label{eq3.8}
	\|S(\cdot, t)\|_X \leq C_0 \text{ and } \|I(\cdot, t)\|_X \leq C_0, \quad \forall t \geq 0.
\end{equation}
Now, we claim that
\[
I(x, t) \to 0 \text{ uniformly on } \bar{\Omega} \text{ as } t \to +\infty .
\]
According to assumption (A2), there exists a constant \( \beta_{1} > 0 \) such that $\beta(x, t) \geq \beta_1, ~\forall x \in \bar{\Omega}, \; t \in [0,T]$. By the $T$-periodicity of $\beta(x, t)$ in $t$, the above lower bound can be extended to all $t \in \mathbb{R}$, that is,
\[
\beta(x, t) \geq \beta_1, \quad \forall (x, t) \in \bar{\Omega} \times \mathbb{R}.
\]
Since $\|S+I\|_X\leq 2C_{0}$ and $\beta(x,t)\leq\gamma(x,t)$, it follows from the second equation of system (\ref{model1.2}) that
 \begin{align*}
\frac{\partial I}{\partial t} &\le d_I \mathcal{J} [I] +(\beta(x,t)-\gamma(x,t))I- \dfrac{\beta(x,t)I^{2}}{m(x,t)+S + I}\\
&\leq d_I \mathcal{J} [I]- \dfrac{\beta(x,t)I^{2}}{m(x,t)+S + I}\\
&\leq d_I \mathcal{J} [I]-\dfrac{\beta_{1}I^{2}}{\|m\|_{C_T}+2C_{0}}.
 \end{align*}
It is easy to see that $I(x,t)$ is a sub-solution of
the following initial value problem
\begin{equation}
	\begin{cases}
		\dfrac{{\rm d}\bar{I}(t)}{{\rm dt}} = -\dfrac{\beta_1 \bar{I}^2(t)}{\|m\|_{C_T}+2C_{0}}, & x \in \Omega, t > 0, \\
		\bar{I}(0) = \max\limits_{x \in \bar{\Omega}} I_0(x),&x \in \Omega.
	\end{cases}
	\label{eq3.9}
\end{equation}
Direct calculation shows that (\ref{eq3.9}) admits a unique solution $\bar{I}$, which is given by
\begin{equation*}
\bar{I}(t)=\frac{(2C_{0}+\|m\|_{C_T})\bar{I}(0)}{2C_{0}+\|m\|_{C_T}+\beta_{1}\bar{I}(0)t},~~\forall t\geq0.
\end{equation*}
Obviously, $\bar{I}(t) \to 0 ~\text{ as } ~t \to +\infty.$
Additionly, by the comparison principle, we can derive
 \[
I(x, t) \leq \bar{I}(t), \quad \forall (x, t) \in \bar{\Omega} \times [0, +\infty).
\]
To sum up,
\begin{equation}
	\label{eq3.10}
	\lim_{t \to +\infty} \|I(\cdot, t)\|_X = 0.
\end{equation}
Hence, $\int_{\Omega} S(x, t) dx \to N \text{ as } t \to +\infty.$ Let
\[
\tilde{S}(t) = \frac{1}{|\Omega|} \int_{\Omega} S(x, t) dx, \quad \text{and} \quad \hat{S}(x, t) = S(x, t) - \tilde{S}(t).
\]
A careful calculation gives that $\int_{\Omega} \hat{S}(x, t) dx=0$ and
\begin{equation}
	\label{eq3.11}
\frac{\partial \hat{S}}{\partial t} = d_S \mathcal{J} [\hat{S}] + h(x, t), \quad x \in \bar{\Omega}, t > 0,
\end{equation}
where
\[
h(x, t) = -\frac{\beta(x, t)SI}{m(x,t)+S + I}+\gamma(x,t)I + \frac{1}{|\Omega|} \int_{\Omega} \left(\frac{\beta(x, t)SI}{m(x,t)+S + I}-\gamma(x,t)I\right) dx.
\]
This together with (\ref{eq3.10}) demonstrates $\lim\limits_{t \to +\infty} \|h(\cdot, t)\|_X = 0$.
Let
\[
f(t) = \int_{\Omega} \hat{S}(x, t)h(x, t)dx.
\]
Noth that $\left| \hat{S}(x, t) \right| \leq 2C_0,~ \forall (x, t) \in \overline{\Omega} \times \mathbb{R}_+$, then
\[
|f(t)| \to 0 \text{ as } t \to +\infty.
\]
Define  $w(t) = \int_{\Omega} \hat{S}^2(x, t)dx$, direct calculation yields that
\begin{align*}
	\frac{dw}{dt}
	&= 2 \int_{\Omega} \hat{S}(x, t) \frac{\partial \hat{S}(x, t)}{\partial t} \, dx \nonumber \\
	&= 2d_S \int_{\Omega} \hat{S}(x, t) \left[ \int_{\Omega} J(x - y, t) \left( \hat{S}(y, t) - \hat{S}(x, t) \right) dy \right] dx + 2f(t) \nonumber \\
	&= -d_S \int_{\Omega} \int_{\Omega} J(x - y, t) \left[ \hat{S}(y, t) - \hat{S}(x, t) \right]^2 dx \, dy + 2 f(t)\\
	&\leq -2d_S \alpha_0 w(t) + 2f(t),
\end{align*}
where $\alpha_0$ is given in Lemma \ref{lemma2.1}. By the constant-variation formula and the comparison principle, we have
\[
w(t) \leq e^{-2d_S\alpha_0 t} w(0) + 2e^{-2d_S\alpha_0 t} \int_0^t e^{2d_S\alpha_0 s} f(s) ds,
\]
which indicates $w(t) \to 0$ as $t \to +\infty$.

Meanwhile, from (\ref{eq3.11}), we have
\begin{align*}
\left| \hat{S}(x, t) \right|
&= \left|
e^{-\vartheta(x, t, 0)} \hat{S}(x, 0)
+ \int_0^t e^{-\vartheta(x, t, s)}
\left[ d_S \int_{\Omega} J(x - y, s) \hat{S}(y, s) dy + h(x, s) \right] ds
\right|\\
&\leq e^{-d_S \alpha_0 t} \left| \hat{S}(x, 0) \right|
+ \int_0^t e^{-d_S \alpha_0 (t - s)}
\left[ d_S \int_{\Omega} J(x - y, s) \left| \hat{S}(y, s) \right| dy + \left| h(x, s) \right| \right] ds\\
&\leq e^{-d_S \alpha_0 t} \left| \hat{S}(x, 0) \right|
+ \int_0^t e^{-d_S \alpha_0 (t - s)}
\left[ C_1 w^{\frac{1}{2}}(s) + \left| h(x, s) \right| \right] ds
\end{align*}
for some positive constant $C_{1}$. It is worth mentioning that Hölder inequality was applied in the above estimate. Note that
\[
\lim_{t \to +\infty} \int_0^t e^{-d_S \alpha_0(t-s)} |h(x,s)| \, ds
= \lim_{t \to +\infty} \frac{|h(x,t)|}{d_S \alpha_0}
= 0 \quad \text{uniformly for } x \in \bar{\Omega},
\]
and
\[
\lim_{t \to +\infty} \int_0^t e^{-d_S \alpha_0(t-s)} w^{\frac{1}{2}}(s) \, ds = \lim_{t \to +\infty} \frac{w^{\frac{1}{2}}(t)}{d_S \alpha_0}= 0.
\]
Consequently, we obtain that
\[
\lim_{t \to +\infty} \left\| \hat{S}(\cdot, t) \right\|_X = 0.
\]
That is to say,
\[
\lim_{t \to +\infty} \left\| S(\cdot, t) - \frac{N}{|\Omega|} \right\|_X = 0.
\]
This ends the proof.
\end{proof}

\begin{theorem}
	\label{theorem3.2}
	Assume that $d_S=d_I=d$ and $R_0(d, N, m)\leq1$, then the disease-free equilibrium \(\left(\frac{N}{|\Omega|}, 0\right)\) is globally attractive.
\end{theorem}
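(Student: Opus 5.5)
When $d_S=d_I=d$, adding the two equations of (\ref{model1.2}) shows that $W:=S+I$ solves the linear problem $\partial_t W=d\mathcal{J}[W]$ with $\int_\Omega W\,dx=N$. The $L^2$ argument in the proof of Theorem \ref{theorem3.1} then gives $W(\cdot,t)\to N/|\Omega|$ in $X$ as $t\to+\infty$. Concretely, set $\hat W=W-N/|\Omega|$, which has zero mean, and use Lemma \ref{lemma2.1} to get $\frac{d}{dt}\|\hat W\|_{L^2}^2\le -2d\alpha_0\|\hat W\|_{L^2}^2$. The constant-variation formula, combined with the bound $\int J|\hat W|\le C\|\hat W\|_{L^2}$, upgrades this to uniform convergence. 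It follows that $\|W\|_X$ is bounded, and that for every $\varepsilon>0$ there is $t_\varepsilon$ with $S\le W\le N/|\Omega|+\varepsilon$ on $\bar\Omega\times[t_\varepsilon,\infty)$. This reduces the problem to showing $I\to0$ uniformly; then $S=W-I\to N/|\Omega|$ follows.

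For $I$, I will use the substitution $S=W-I$ and the fact that $s\mapsto \frac{s}{m+s}$ is increasing in $s$. Since $S+I=W$, the incidence term is
\[
\frac{\beta SI}{m+W}=\frac{\beta W}{m+W}I-\frac{\beta I^2}{m+W}.
\]
For $t\ge t_\varepsilon$ this gives
\[
\partial_t I\le d\mathcal{J}[I]+\big(\theta_\varepsilon(x,t)-\gamma\big)I-e_2 I^2,
\qquad \theta_\varepsilon:=\frac{\beta(N/|\Omega|+\varepsilon)}{m+N/|\Omega|+\varepsilon},
\]
where $e_2:=\beta_1/(\|m\|_{C_T}+C)>0$ is obtained from the uniform bound on $W$. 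By the comparison principle, $I(\cdot,t)\le u_\varepsilon(\cdot,t)$, where $u_\varepsilon$ solves (\ref{eq3.4}) with $e_1=\theta_\varepsilon$ and the same $e_2$, started at time $t_\varepsilon$ from the datum $I(\cdot,t_\varepsilon)$. By periodicity we may shift $t_\varepsilon$ to a multiple of $T$.

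The next step is to apply Lemma \ref{lemma3.2}. If $R_0<1$, Lemma \ref{lemma2.3}(i) with $\mu=1$ gives $s(\mathcal{B}+\mathcal{F})<0$. By the continuity of $R_0$ in $\theta$ established in the proof of Theorem \ref{theorem2.2}, the perturbed quantity satisfies $R_0(\theta_\varepsilon)<1$ for small $\varepsilon$, hence $s(\mathcal{B}+\theta_\varepsilon)<0$. Lemma \ref{lemma3.2}(ii) then yields $u_\varepsilon\to0$, and therefore $I\to0$ uniformly.

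The main obstacle is the critical case $R_0=1$. Here $\theta_\varepsilon>\theta$, and Corollary \ref{corollary2.1}(iv) gives $R_0(\theta_\varepsilon)>1$, so the perturbation argument fails. I would try two ways around this. The first is to bypass the $\varepsilon$ and compare with $\theta$ directly, writing $\frac{\beta W}{m+W}-\theta=O(|W-N/|\Omega||)$, where the error decays exponentially from the $L^2$ estimate. The linear part is then $\theta-\gamma$ plus an integrable-in-time perturbation. Multiplying by a positive periodic eigenfunction-like test function $\varphi\in\operatorname{Int}C_T^+$ (one with $\mathcal{M}_1[\varphi]\le\lambda\varphi$, with $\lambda$ close to $0$, from the characterization $\lambda_p'$ in the proof of Theorem \ref{theorem2.2}) produces an upper solution of the form $\exp\big(\int_{t_0}^t \delta(s)ds\big)\,k\varphi$. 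This shows that $I$ stays bounded and the $I^2$ term stays effective. The second is to argue by contradiction through $\omega$-limit sets. If $\limsup\|I\|>0$, one extracts along $t_n=nT$ a limiting entire solution of the asymptotic equation $\partial_t I=d\mathcal{J}[I]+(\theta-\gamma)I-\frac{\beta I^2}{m+N/|\Omega|}$. For this equation Lemma \ref{lemma3.2}(ii) with $s(\mathcal{B}+\mathcal{F})=0$ forbids nontrivial bounded periodic or positive states. The strict quadratic damping, together with the sandwich $I\le u_\varepsilon$ in which $u_\varepsilon^*\to0$ as $\varepsilon\to0$ (by uniqueness in Lemma \ref{lemma3.2}), gives the contradiction. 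I would pursue the second route, using the continuity of the positive periodic solution $u^*_\varepsilon$ of (\ref{eq3.4}) with respect to $\varepsilon$ to conclude $\limsup_{t\to\infty}\|I(\cdot,t)\|_X\le\|u^*_\varepsilon\|\to0$.
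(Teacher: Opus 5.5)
Your plan follows the paper's proof almost step by step. You show $S+I\to N/|\Omega|$ uniformly, reproving it by the $L^2$ and constant-variation argument rather than quoting it. You dominate $I$ by the logistic problem (\ref{eq3.4}) with $e_1=\theta_\varepsilon$ and use Lemma \ref{lemma3.2}(ii) when $R_0<1$. In the critical case you use $\limsup_{t\to\infty}\|I(\cdot,t)\|_X\le\|u^*_\varepsilon\|_{C_T}$ and then let $\varepsilon\to0$. Two minor variations are harmless: you use a constant damping coefficient $e_2$ instead of $\beta/(m+N/|\Omega|+\varepsilon)$, and you use the continuity of $R_0$ in $\theta$ to obtain $s(\mathcal{B}+\mathcal{F}_{+\varepsilon})<0$.

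The gap is the claim $\|u^*_\varepsilon\|_{C_T}\to0$ as $\varepsilon\to0$, which is the entire content of the case $R_0=1$. You support it only by appealing to ``continuity of $u^*_\varepsilon$ with respect to $\varepsilon$'' and ``uniqueness in Lemma \ref{lemma3.2}''. No such continuity is available. At $\varepsilon=0$ the positive branch ceases to exist, and the nonlocal operator $\mathcal{J}$ has no smoothing effect. So you cannot extract a subsequence of $u^*_\varepsilon$ converging in $C_T$ and then identify its limit via Lemma \ref{lemma3.2}.

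The same lack of compactness blocks extracting a limiting entire solution from $I(\cdot,nT)$ in your $\omega$-limit argument. Your first route also fails: there $\mathcal{M}_1[\varphi]\le\lambda\varphi$ is in general only available for $\lambda>0$, so it produces supersolutions that do not decay.

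What is missing is monotonicity in $\varepsilon$ plus Dini's theorem:
\begin{enumerate}
\item Since $\theta_\varepsilon$ increases with $\varepsilon$, the comparison principle for periodic solutions shows that $u^*_\varepsilon$ is non-decreasing in $\varepsilon$. Hence $u^*_\varepsilon$ decreases pointwise to some $u_0$ as $\varepsilon\downarrow0$.
\item This $u_0$ is bounded, $T$-periodic and nonnegative. Passing to the limit in the time-integrated equation by dominated convergence shows that it solves $\partial_t u=d\mathcal{J}[u]+(\theta-\gamma)u-e_2u^2$.
\item Because $s(\mathcal{B}+\mathcal{F})=0$, Lemma \ref{lemma3.2}(ii) gives $u_0\equiv0$. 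That lemma is formulated in the class $\mathcal{X}_+$ of bounded measurable functions, so it applies to such a possibly discontinuous limit.
\item Dini's theorem on the compact set $\bar\Omega\times[0,T]$ then upgrades the monotone pointwise convergence to $\|u^*_\varepsilon\|_{C_T}\to0$.
\end{enumerate}
This is exactly how the paper closes the critical case. Once it is inserted your argument is complete, and the contradiction argument can be dropped.
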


\begin{proof}
As $d_S=d_I=d$, adding the two equations in system (\ref{model1.2}) gives
\begin{equation*}
	\begin{cases}
		\dfrac{\partial (S+I)}{\partial t} = d\mathcal{J} [S+I], & x \in \Omega, \; t > 0, \\
		(S+I)(x, 0) = S_0(x)+I_0(x), & x \in \Omega.
	\end{cases}
\end{equation*}
By \citep[Lemma 3.2]{Lin2023}, one can derive that
\begin{equation}
	S(x, t) + I(x, t) \to \frac{1}{|\Omega|}\int_{\Omega} (S_{0}(x)+I_{0}(x))\,dx=\frac{N}{|\Omega|} \text{ uniformly on } \bar{\Omega} \text{ as } t \to +\infty .
\end{equation}
Hence, for any small enough $\varepsilon > 0$, there is a large $t^{*}>0$ such that
\begin{equation}
	\label{eq3.13}
	\frac{N}{|\Omega|} - \varepsilon \leq S(x, t) + I(x, t) \leq \frac{N}{|\Omega|} + \varepsilon, \quad x \in \bar{\Omega}, \quad t \geq t^{*}.
\end{equation}
Notice that $I(x,t)$ solves
\begin{equation}
		\label{eq3.14}
	\begin{cases}
	\frac{\partial I}{\partial t} = d \mathcal{J} [I]+\left[\frac{\beta(x, t)(S+I)}{m(x,t)+S+I}-\gamma(x,t)-\frac{\beta(x,t)I}{m(x,t)+S+I}\right]I & x \in \Omega, \quad t > 0, \\
	I(x, 0) = I_{0}(x), & x \in \Omega.
\end{cases}
\end{equation}
In view of (\ref{eq3.13}) and (\ref{eq3.14}), we consider the following  auxiliary problem
\begin{equation}
	\label{eq3.15}
	\begin{cases}
		\dfrac{\partial \bar{I}_\varepsilon}{\partial t} = d \mathcal{J} [\bar{I}_\varepsilon] +\left[\frac{\beta(x, t)\left(\frac{N}{|\Omega|}+\varepsilon\right)}{m(x,t)+\frac{N}{|\Omega|}+\varepsilon}-\gamma(x,t)-\frac{\beta(x,t)\bar{I}_\varepsilon}{m(x,t)+\frac{N}{|\Omega|}+\varepsilon}\right]\bar{I}_\varepsilon, & x \in \Omega, \quad t > t^{*}, \\
		\bar{I}_\varepsilon(x, t^{*}) = I(x, t^{*}), & x \in \Omega.
	\end{cases}
\end{equation}
It is well known that system (\ref{eq3.15})  admits a unique solution, denoted by $\bar{I}_\varepsilon(x, t)$. Furthermore, the comparison principle implies that $\bar{I}_\varepsilon(x, t)$ is the super-solution of (\ref{eq3.14}) for $t\geq t^{*}$. As a result, we have
\begin{equation}
		\label{eq3.17}
	0 \leq I(x, t) \leq \bar{I}_\varepsilon(x, t), \quad x \in \bar{\Omega}, \quad t \geq t^{*}.
\end{equation}

We first discuss the case $R_0(d, N, m)<1$, which is equivalent to $s(\mathcal{B} + \mathcal{F})<0$. According to \cite[Proposition 2.4]{Lin2024}, we obtain $s(\mathcal{B}+\mathcal{F}_{+\varepsilon})>s(\mathcal{B} + \mathcal{F})$, where $\mathcal{F}_{+\varepsilon}$ is defined by
\[
\mathcal{F}_{+\varepsilon}[u](x,t) := \frac{\beta(x, t)\left(\frac{N}{|\Omega|}+\varepsilon\right)}{m(x,t)+\frac{N}{|\Omega|}+\varepsilon}u(x,t).
\]
Thanks to the continuous dependence of the spectral bound on $\varepsilon$, we may assume that $s(\mathcal{B}+\mathcal{F}_{+\varepsilon})<0$. By Lemma \ref{lemma3.2} (ii), one yields that
\[
I(x, t)\leq \bar{I}_\varepsilon(x, t)\to 0 \text{ uniformly on } \bar{\Omega} \text{ as } t \to +\infty .
\]

Now we discuss the case $R_0(d, N, m)=1$. In this case, $s(\mathcal{B} + \mathcal{F})=0$, then $s(\mathcal{B}+\mathcal{F}_{+\varepsilon})>0$ and $s(\mathcal{B}+\mathcal{F}_{+\varepsilon})\to 0$ as $\varepsilon\to 0$. It follows from Lemma \ref{lemma3.2} (i) that
\begin{equation}
	\label{eq3.18}
\lim_{t \to +\infty} \left\| \bar{I}_\varepsilon(\cdot, t) -\bar{I}^{*}_\varepsilon(\cdot, t) \right\|_X = 0,
\end{equation}
where $\bar{I}_\varepsilon^*\in \operatorname{Int}C_T^+$ is the unique positive periodic solution of
\[
\dfrac{\partial \bar{I}^{\ast}_\varepsilon}{\partial t} = d \mathcal{J} [\bar{I}^{\ast}_\varepsilon] +\left[\frac{\beta(x, t)\left(\frac{N}{|\Omega|}+\varepsilon\right)}{m(x,t)+\frac{N}{|\Omega|}+\varepsilon}-\gamma(x,t)-\frac{\beta(x,t)\bar{I}^{\ast}_\varepsilon}{m(x,t)+\frac{N}{|\Omega|}+\varepsilon}\right]\bar{I}^{\ast}_\varepsilon, \quad  x \in \Omega, \ t \in \mathbb{R}.
\]
It is direct from \citep[Proposition 3.3]{Lin2023} that $\bar{I}_\varepsilon^*$ is non-decreasing in $\varepsilon$. Therefore, there exists a sequence ${\varepsilon_{n}}$ with $\varepsilon_{n}\to 0$ as $n\to +\infty$ such that
\[
\bar{I}^*_{\varepsilon_{n}}(x,t)\to I_{1}(x,t) \text{ pointwise on } \bar{\Omega}\times \mathbb{R} \text{ as } n \to +\infty,
\]
where $I_{1}(x,t)$ satisfies
\begin{equation}
	\label{eq3.19}
	\begin{cases}
		\dfrac{\partial I_{1}(x,t)}{\partial t} = d \mathcal{J} [I_{1}] +\left[\frac{\beta(x, t)\frac{N}{|\Omega|}}{m(x,t)+\frac{N}{|\Omega|}}-\gamma(x,t)-\frac{\beta(x,t)I_{1}}{m(x,t)+\frac{N}{|\Omega|}}\right]I_{1}, & x \in \Omega, \quad t \in \mathbb{R}, \\
		I_{1}(x, t) = I_{1}(x, t+T), & x \in \Omega.
	\end{cases}
\end{equation}
According to \cite[Lemma 3.8]{Lin2026}, we have $I_{1}(x,t)\equiv0$.
By applying Dini's theorem, we get
\begin{equation}
	\label{eq3.21}
\bar{I}^*_{\varepsilon}(x,t)\to 0~ \text{in } C_{T} \text{ as } \varepsilon \to 0.
\end{equation}
Combining (\ref{eq3.17}), (\ref{eq3.18}) and (\ref{eq3.21}), we have
\[
I(x, t) \to 0 \text{ uniformly on } \bar{\Omega} \text{ as } t \to +\infty .
\]
Recall that
\[
S(x, t) + I(x, t) \to \frac{N}{|\Omega|} \text{ uniformly on } \bar{\Omega} \text{ as } t \to +\infty.
\]
Then
\[
\lim_{t \to +\infty} \left\| \left(S(\cdot, t),I(\cdot, t)\right) - \left(\frac{N}{|\Omega|}, 0\right) \right\|_{X\times X } = 0.
\]
This completes the proof.
\end{proof}

\subsection{The endemic equilibrium}
In this subsection, we will establish the existence and uniqueness of the positive solution of system (\ref{eq3.3}), which is called as the endemic equilibrium of system (\ref{model1.2}). Moreover, we will explore the long-time behavior of system (\ref{model1.2}) when $d_{S}=d_{I}$.

\begin{lemma}
	\label{lemma3.4}
	Suppose that \( d_I = d_S = d \). Then \( (S^*(x,t), I^*(x,t)) \) is a solution of {\rm(\ref{eq3.3})} if and only if it is a solution of
	\begin{equation}
		\label{eq3.22}
		\begin{cases}
			\frac{N}{|\Omega|} = S^*(x, t) + I^*(x, t), & x \in \Omega, t \in \mathbb{R}, \\
			\frac{\partial I^*}{\partial t} = d\mathcal{J}[I^*] +\left[\frac{\beta(x, t)\frac{N}{|\Omega|}}{m(x,t)+\frac{N}{|\Omega|}}-\gamma(x,t)-\frac{\beta(x,t)I^*}{m(x,t)+\frac{N}{|\Omega|}}\right]I^*, & x \in \Omega, t \in \mathbb{R}, \\
			I^*(x, t) = I^*(x, t + T), & x \in \Omega, t \in \mathbb{R}.
		\end{cases}
	\end{equation}
\end{lemma}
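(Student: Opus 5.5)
The proof has two directions, and the key fact is that when $d_S=d_I=d$ the sum $W:=S^*+I^*$ satisfies a closed linear nonlocal equation.

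\emph{($\Rightarrow$)} Let $(S^*,I^*)$ solve (\ref{eq3.3}) and set $W:=S^*+I^*\in C_T$. Adding the first two equations of (\ref{eq3.3}) gives
\[
\partial_t W = d\mathcal{J}[W], \qquad W(x,t)=W(x,t+T).
\]
The first step is to show that every $T$-periodic solution of this equation is spatially constant. Following the energy argument in the proof of Theorem \ref{theorem3.1}, write $W=\tilde W(t)+\hat W(x,t)$ with $\tilde W(t)=\frac{1}{|\Omega|}\int_\Omega W\,dx$. By the symmetry of $J$, integrating over $\Omega$ gives $\tilde W'(t)=0$, so $\tilde W$ is a constant. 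The fluctuation $\hat W$ has zero mean and solves the same equation. For $w(t)=\int_\Omega \hat W^2dx$, Lemma \ref{lemma2.1} yields
\[
w'(t)\le -2d\alpha_0 w(t), \qquad\text{hence}\qquad w(t+kT)\le e^{-2d\alpha_0 kT}w(t)\quad (k\in\mathbb{N}).
\]
Periodicity gives $w(t+kT)=w(t)$, so $w\equiv 0$. Continuity then gives $\hat W\equiv0$ pointwise. (Alternatively, one can invoke \citep[Lemma 3.2]{Lin2023}: the solution of the Cauchy problem converges uniformly to its mean, and a periodic solution can only converge if it is already equal to that mean.) Finally, the total-mass normalization in (\ref{eq3.3}) gives $\tilde W=N/|\Omega|$, because $\int_\Omega(S^*+I^*)dx$ is conserved and equals $N$. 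This is the first line of (\ref{eq3.22}). Substituting $S^*=\frac{N}{|\Omega|}-I^*$ into the $I$-equation, and using $m+S^*+I^*=m+\frac{N}{|\Omega|}$, the incidence term becomes
\[
\frac{\beta\left(\frac{N}{|\Omega|}-I^*\right)I^*}{m+\frac{N}{|\Omega|}},
\]
which is exactly the bracketed expression in the second line of (\ref{eq3.22}). Periodicity carries over directly.

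\emph{($\Leftarrow$)} Given a solution $I^*$ of (\ref{eq3.22}), define $S^*:=\frac{N}{|\Omega|}-I^*$. The $I$-equation of (\ref{eq3.3}) follows by reversing the substitution above. For the $S$-equation, note that constants satisfy $\mathcal{J}[c]=0$, so
\[
\partial_t S^*=-\partial_t I^* \quad\text{and}\quad d\mathcal{J}[S^*]=-d\mathcal{J}[I^*].
\]
The reaction terms change sign in the same way, so $S^*$ satisfies the first equation of (\ref{eq3.3}). Periodicity is inherited from $I^*$, and $\int_\Omega(S^*+I^*)\,dx=N$.

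The only nontrivial step is proving that a periodic solution of $\partial_t W=d\mathcal{J}[W]$ must be constant. The $L^2$ energy estimate based on Lemma \ref{lemma2.1} is the planned route, and it needs only the upgrade from the $L^2$ conclusion $w\equiv 0$ to a pointwise statement, which continuity of $W$ supplies.
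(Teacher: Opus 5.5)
Your proof is correct and follows the same route as the paper. You add the equations to get $\partial_t(S^*+I^*)=d\mathcal{J}[S^*+I^*]$, conclude $S^*+I^*\equiv N/|\Omega|$, substitute into the $I$-equation, and reverse the computation for the converse. The only difference is that you prove the key fact directly: a $T$-periodic solution of $\partial_t W=d\mathcal{J}[W]$ must be spatially constant, which you obtain from the $L^2$ energy estimate of Lemma \ref{lemma2.1} combined with periodicity. The paper instead defers this step to arguments similar to \citep[Proposition 3.2]{Lin2023}.
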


\begin{proof}
	Note that \( d_I = d_S = d \). If \( (S^*, I^*) \) is the solution of (\ref{eq3.3}), then we have
	\begin{equation*}
		\frac{\partial}{\partial t} (S^* + I^*) = d\mathcal{J}[S^* + I^*].
	\end{equation*}
By arguments similar to those in \cite[Proposition 3.2]{Lin2023}, we obtain that
	\begin{equation*}
		S^*(x, t) + I^*(x, t) = \frac{N}{|\Omega|} \text{ on } \bar{\Omega} \times \mathbb{R}.
	\end{equation*}
Substituting $S^*=\frac{N}{|\Omega|}-I^*$ into the second equation of (\ref{eq3.3}), we derive the equation about \( I^* \) in (\ref{eq3.22}).
	
On the other hand, if \( (S^*, I^*) \) is a solution of system (\ref{eq3.22}), it follows that
\begin{equation}
	\begin{aligned}
		\frac{\partial S^*}{\partial t}
		&= -d\mathcal{J} [I^*] - \left[\frac{\beta(x, t)\frac{N}{|\Omega|}}{m(x,t)+\frac{N}{|\Omega|}}-\gamma(x,t)-\frac{\beta(x,t)I^*}{m(x,t)+\frac{N}{|\Omega|}}\right]I^* \\
		&= d\mathcal{J} [S^*] - \frac{\beta(x, t)S^* I^*}{m(x,t)+S^* + I^*} + \gamma(x, t)I^*, \quad x \in \Omega, t \in \mathbb{R},
	\end{aligned}
\end{equation}
which suggests that \((S^*, I^*)\) satisfies (\ref{eq3.3}). The proof is complete.
\end{proof}

\begin{theorem}
	\label{theorem3.3}
	Suppose \( d_I = d_S = d \) and \( R_0(d, N, m) > 1 \). Then {\rm(\ref{eq3.3})} admits a unique positive solution \( (S_{e}^*, I_{e}^*) \in \operatorname{Int} C^+_T \times \operatorname{Int} C^+_T\).
\end{theorem}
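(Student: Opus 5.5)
By Lemma \ref{lemma3.4}, it suffices to solve the scalar periodic problem for $I^*$ in \eqref{eq3.22} and then set $S^*=\frac{N}{|\Omega|}-I^*$. The key point to verify is that $S^*$ is strictly positive. I would first cast the $I^*$ equation in the form \eqref{eq3.4}. Take
\[
e_1(x,t)=\theta(x,t)=\frac{\beta(x,t)\frac{N}{|\Omega|}}{m(x,t)+\frac{N}{|\Omega|}},\qquad e_2(x,t)=\frac{\beta(x,t)}{m(x,t)+\frac{N}{|\Omega|}} .
\]
Both are continuous and $T$-periodic, and $e_2>0$ on $\bar\Omega\times\mathbb{R}$ by (A2). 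Then $\tilde{\mathcal F}=\mathcal F$. Since $R_0(d,N,m)>1$, Lemma \ref{lemma2.3}(i) with $\mu=1$ gives $s(\mathcal B+\mathcal F)>0$. Lemma \ref{lemma3.2}(i) then yields a unique $I^*_e\in\operatorname{Int}C_T^+$ solving the second and third lines of \eqref{eq3.22}.

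The main step is the strict upper bound $I^*_e<\frac{N}{|\Omega|}$ on $\bar\Omega\times\mathbb{R}$, which gives $S^*_e\in\operatorname{Int}C_T^+$. I would argue by comparison with the constant $K:=\frac{N}{|\Omega|}$. We have $\mathcal J[K]=0$, and the reaction term at $u=K$ equals
\[
\Big(\frac{\beta K}{m+K}-\gamma-\frac{\beta K}{m+K}\Big)K=-\gamma K<0.
\]
So $K$ is a strict periodic supersolution. Now let $v=K-I^*_e$, which is continuous and $T$-periodic, and consider the point where $v$ attains its minimum over $\bar\Omega\times[0,T]$. Suppose, for contradiction, that this minimum is $\le 0$, attained at $(x_0,t_0)$. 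There $\partial_t v=0$ (the minimum is interior in $t$ by periodicity) and $\mathcal J[v](x_0,t_0)\ge0$. Writing the equation for $v$, $\partial_t v-d\mathcal J[v]$ equals $f(K)-f(I^*_e)$, where $f(u)=\big(e_1-\gamma-e_2u\big)u$. Since the $e_1$ terms cancel, we get $f(K)=-\gamma K$. If $v(x_0,t_0)=0$, then $I^*_e=K$ there and the right side is $0$, while the left side is $\le0$. To obtain a strict contradiction I would instead compute directly at that point:
\[
\partial_t I^*_e = d\mathcal J[I^*_e]+f(I^*_e)\le 0+(-\gamma K)<0 .
\]
This holds because $I^*_e$ attains its maximum there, so $\mathcal J[I^*_e]\le 0$, contradicting $\partial_t I^*_e=0$ at an interior time-maximum. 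The same computation handles the case where the maximum of $I^*_e$ exceeds $K$: there $f(I^*_e)=(e_1-\gamma-e_2I^*_e)I^*_e<(e_1-\gamma-e_2K)I^*_e=-\gamma I^*_e<0$. Hence $\max I^*_e<K$, so $S^*_e=K-I^*_e\in\operatorname{Int}C_T^+$. By Lemma \ref{lemma3.4}, $(S^*_e,I^*_e)$ solves \eqref{eq3.3}, including the mass constraint $\int_\Omega(S^*_e+I^*_e)\,dx=N$.

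For uniqueness, let $(S^*,I^*)$ be any positive solution of \eqref{eq3.3}. Lemma \ref{lemma3.4} forces $S^*+I^*\equiv\frac{N}{|\Omega|}$, so $I^*$ is a positive periodic solution of \eqref{eq3.4} with the same $e_1,e_2$. The uniqueness part of Lemma \ref{lemma3.2}(i) then gives $I^*=I^*_e$, and therefore $S^*=S^*_e$.

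I expect the only delicate point to be the strict bound $I^*_e<K$. Existence and uniqueness themselves are inherited from Lemma \ref{lemma3.2}, whereas the maximum-point argument has to use the periodicity in $t$ and the sign of the nonlocal operator at a spatial maximum, because no regularity beyond continuity in $x$ is available.
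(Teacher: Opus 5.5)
Your proposal is correct and follows the paper's proof. You reduce via Lemma \ref{lemma3.4}, get $I_e^*$ from Lemma \ref{lemma3.2}(i) using $s(\mathcal{B}+\mathcal{F})>0$, and use the strict supersolution $\frac{N}{|\Omega|}$ to obtain $I_e^*<\frac{N}{|\Omega|}$. The only differences are that you prove the strict bound by evaluating the equation directly at a maximum point of $I_e^*$, where the paper applies the comparison principle and then rules out equality, and that you spell out uniqueness explicitly. The detour through $v=K-I_e^*$ can be dropped: the identity you wrote there is off, since $\partial_t v-d\mathcal{J}[v]=-f(I_e^*)=f(K)-f(I_e^*)+\gamma K$, but your final argument does not use it.
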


\begin{proof}
Since \( R_0(d, N, m) > 1 \), which is equivalent to $s(\mathcal{B} + \mathcal{F})>0$ by Lemma \ref{lemma2.3} (i). By virtue of Lemma \ref{lemma3.2} (i), one can obtain that the second equation of (\ref{eq3.22}) admits a unique positive periodic solution, denoted by $I_{e}^* \in \mathrm{Int}C^+_T$. Note that \( d_I = d_S = d \). Then it follows from Lemma \ref{lemma3.4} that \( \left( \frac{N}{|\Omega|}-I_{e}^*, I_{e}^* \right) \) is the solution of (\ref{eq3.3}). In addition, it is obvious that
\[
\frac{\partial}{\partial t} \left( \frac{N}{|\Omega|} \right) > d\mathcal{J} \left[\frac{N}{|\Omega|}\right] +
\left[\frac{\beta(x, t)\frac{N}{|\Omega|}}{m(x,t)+\frac{N}{|\Omega|}}-\gamma(x,t)-\frac{\beta(x,t)\frac{N}{|\Omega|}}{m(x,t)+\frac{N}{|\Omega|}}\right]\frac{N}{|\Omega|}.
\]
Then it is direct from \citep[Proposition 3.3]{Lin2023} that $I_{e}^*(x, t) \leq \frac{N}{|\Omega|}$. However, when $I_{e}^*(x, t)=\frac{N}{|\Omega|}$, $I_{e}^*$ does not satisfy the second equation of (\ref{eq3.22}), yielding a contradiction. Therefore, $I_{e}^*(x, t)<\frac{N}{|\Omega|}$, which implies that $\frac{N}{|\Omega|}-I_{e}^*\in \mathrm{Int} C^+_T$. In conclusion, (\ref{eq3.3}) admits a unique positive solution \( (S_{e}^*, I_{e}^*) \in \operatorname{Int} C^+_T \times \operatorname{Int} C^+_T\), where $S_{e}^*=\frac{N}{|\Omega|}-I_{e}^*$. This completes the proof.
\end{proof}

\begin{theorem}
		\label{theorem3.4}
	Assume that \( d_I = d_S = d \) and $R_0(d,N,m) > 1$. Then the endemic equilibrium $(S_{e}^*, I_{e}^*)$ is globally attractive.
\end{theorem}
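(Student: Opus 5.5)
We follow the same squeezing strategy as in the proof of Theorem~\ref{theorem3.2}, now bounding $I$ from both sides by solutions of logistic-type comparison problems. Since $d_S=d_I=d$, the sum $S+I$ solves $\partial_t(S+I)=d\mathcal{J}[S+I]$. By \citep[Lemma 3.2]{Lin2023}, $S+I\to N/|\Omega|$ uniformly on $\bar\Omega$. Hence for every small $\varepsilon>0$ there is $t^*$ such that (\ref{eq3.13}) holds for $t\ge t^*$. Once we show $I(\cdot,t)-I_e^*(\cdot,t)\to0$ in $X$, we obtain $S-S_e^*=(S+I-N/|\Omega|)-(I-I_e^*)\to0$ by Theorem~\ref{theorem3.3} and Lemma~\ref{lemma3.4}. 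So the problem reduces to the $I$-equation (\ref{eq3.14}).

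\textbf{Upper and lower comparison problems.} The map $s\mapsto \beta s/(m+s)$ is nondecreasing in $s=S+I$, and $\beta I/(m+s)$ is nonincreasing in $s$. Therefore, for $t\ge t^*$, the reaction coefficient in (\ref{eq3.14}) is bounded above by
\[
\frac{\beta(N/|\Omega|+\varepsilon)}{m+N/|\Omega|+\varepsilon}-\gamma-\frac{\beta I}{m+N/|\Omega|+\varepsilon},
\]
which is the coefficient of (\ref{eq3.15}). It is bounded below by the analogous expression with $\varepsilon$ replaced by $-\varepsilon$:
\[
\frac{\beta(N/|\Omega|-\varepsilon)}{m+N/|\Omega|-\varepsilon}-\gamma-\frac{\beta I}{m+N/|\Omega|-\varepsilon}.
\]
Let $\underline I_\varepsilon$ be the solution of this lower problem with initial value $I(\cdot,t^*)$. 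By the comparison principle, $\underline I_\varepsilon\le I\le \bar I_\varepsilon$ for $t\ge t^*$. Here $I(\cdot,t^*)>0$ by Proposition~\ref{proposition3.1}.

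\textbf{Applying Lemma~\ref{lemma3.2}.} Since $R_0>1$, Lemma~\ref{lemma2.3}(i) gives $s(\mathcal{B}+\mathcal{F})>0$. Using \cite[Proposition 2.4]{Lin2024} (monotonicity) together with continuity of the spectral bound in the coefficient, $s(\mathcal{B}+\mathcal{F}_{\pm\varepsilon})>0$ for all small $\varepsilon$, where $\mathcal{F}_{-\varepsilon}$ is defined like $\mathcal{F}_{+\varepsilon}$. Lemma~\ref{lemma3.2}(i) then provides unique positive periodic solutions $\bar I^*_\varepsilon$ and $\underline I^*_\varepsilon$ attracting $\bar I_\varepsilon$ and $\underline I_\varepsilon$, respectively. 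Consequently
\[
\underline I^*_\varepsilon(x,t)-\delta\le I(x,t)\le \bar I^*_\varepsilon(x,t)+\delta
\]
for all $t$ large, for any $\delta>0$.

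\textbf{Main obstacle: letting $\varepsilon\to0$.} By \citep[Proposition 3.3]{Lin2023}, the periodic solutions are ordered: $\underline I^*_\varepsilon\le I_e^*\le\bar I^*_\varepsilon$, $\bar I^*_\varepsilon$ is nonincreasing as $\varepsilon\downarrow0$, and $\underline I^*_\varepsilon$ is nondecreasing. So both families have pointwise monotone limits $I_+\ge I_e^*\ge I_-$. The difficulty is that nonlocal dispersal provides no compactness or regularity, so we cannot simply pass to the limit in a classical sense. I would handle it as follows.
\begin{itemize}
\item The limits $I_\pm$ are bounded, measurable, $T$-periodic and nonnegative.
\item Passing to the limit in the integrated (variation-of-constants) form of the equations, using dominated convergence for the nonlocal term, shows that $I_\pm$ are periodic solutions of the second equation of (\ref{eq3.22}) in the class $\mathcal{X}_+$.
\item Since $I_-\ge \underline I^*_{\varepsilon_0}\gg0$, both limits are positive.
\item Uniqueness of positive periodic solutions in $\mathcal{X}_+$, from \cite[Theorem 3.9]{Lin2026} or the argument behind Lemma~\ref{lemma3.2}(i), gives $I_+=I_-=I_e^*$.
\end{itemize}
Because $I_e^*$ is continuous and the convergence is monotone on the compact set $\bar\Omega\times[0,T]$, Dini's theorem upgrades it to uniform convergence. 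Taking first $\delta$ and then $\varepsilon$ small yields $\|I(\cdot,t)-I_e^*(\cdot,t)\|_X\to0$, and hence the global attractivity of $(S_e^*,I_e^*)$.
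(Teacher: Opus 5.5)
Your proposal is correct and follows essentially the same route as the paper. Both arguments squeeze $I$ between the solutions of the $\pm\varepsilon$ logistic comparison problems (\ref{eq3.15})--(\ref{eq3.16}), apply Lemma~\ref{lemma3.2}(i) to reach their periodic attractors, use the monotonicity in $\varepsilon$ from \citep[Proposition 3.3]{Lin2023}, identify both monotone limits with the unique positive periodic solution, and conclude with Dini's theorem. Your explicit sandwich $\underline I^*_\varepsilon\le I_e^*\le\bar I^*_\varepsilon$, and your remark that uniqueness is needed in the bounded measurable class $\mathcal{X}_+$ (where the limits a priori live), make precise a step the paper handles by simply citing Lemma~\ref{lemma3.2}(i).
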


\begin{proof}
Since \( d_I = d_S = d \), as shown in the proof of Theorem \ref{theorem3.2}, (\ref{eq3.13}) and (\ref{eq3.14}) still hold. We consider the auxiliary problem (\ref{eq3.15}) and the following one
\begin{equation}
	\label{eq3.16}
	\begin{cases}
		\dfrac{\partial \underline{I}_\varepsilon}{\partial t} = d \mathcal{J} [\underline{I}_\varepsilon] +\left[\frac{\beta(x, t)\left(\frac{N}{|\Omega|}-\varepsilon\right)}{m(x,t)+\frac{N}{|\Omega|}-\varepsilon}-\gamma(x,t)-\frac{\beta(x,t)\underline{I}_\varepsilon}{m(x,t)+\frac{N}{|\Omega|}-\varepsilon}\right]\underline{I}_\varepsilon, & x \in \Omega, \quad t > t^{*}, \\
		\underline{I}_\varepsilon(x, t^{*}) = I(x, t^{*}), & x \in \Omega.
	\end{cases}
\end{equation}
Let $\underline{I}_\varepsilon(x,t)$ be the solution of \eqref{eq3.16}. By the comparison principle, we have
\begin{equation}
	\label{sub-super}
	\underline{I}_\varepsilon(x, t) \leq I(x, t) \leq \bar{I}_\varepsilon(x, t), \quad x \in \bar{\Omega}, \quad t \geq t^{*}.
\end{equation}	
If $R_0(d, N, m) > 1$, it follows from Lemma \ref{lemma2.3} (i) that $s(\mathcal{B} + \mathcal{F})>0$. In view of \cite[Proposition 2.4]{Lin2024}, we have $s(\mathcal{B}+\mathcal{F}_{-\varepsilon})<s(\mathcal{B} + \mathcal{F})<s(\mathcal{B}+\mathcal{F}_{+\varepsilon})$, where $\mathcal{F}_{+\varepsilon}$ is defined as in Theorem \ref{theorem3.2}, and $\mathcal{F}_{-\varepsilon}$ is defined by
\[
\mathcal{F}_{-\varepsilon}[u](x,t) := \frac{\beta(x, t)\left(\frac{N}{|\Omega|}-\varepsilon\right)}{m(x,t)+\frac{N}{|\Omega|}-\varepsilon}u(x,t).
\]
We may assume that $s(\mathcal{B}+\mathcal{F}_{\pm\varepsilon})>0$ for all small $\varepsilon$ due to the continuous dependence of the spectral bound on $\varepsilon$. By virtue of Lemma \ref{lemma3.2} (i), (\ref{eq3.18}) remains true and it also holds that
\begin{equation}
	\label{eq3.24}
	\lim_{t \to +\infty} \left\| \underline{I}_\varepsilon(\cdot, t) -\underline{I}^{*}_\varepsilon(\cdot, t) \right\|_X = 0,
\end{equation}	
where $\underline{I}^{*}_\varepsilon\in \operatorname{Int}C_T^+$ is the unique positive periodic solution of (\ref{eq3.16}). By \citep[Proposition 3.3]{Lin2023}, we have $\bar{I}_\varepsilon^*$ is non-decreasing in $\varepsilon$, and $\underline{I}^{*}_\varepsilon$ is non-increasing in $\varepsilon$, and $\bar{I}^{*}_{\varepsilon_{1}}\geq\underline{I}^{*}_{\varepsilon_{2}}$ for any $\varepsilon_{1}>0$ and $\varepsilon_{2}>0$.
Thus, we get
\begin{equation}
	\label{eq3.25}
\bar{I}^*_{\varepsilon}(x,t)\to I^{*}_{1}(x,t)~ \text{ pointwise on } \bar{\Omega}\times \mathbb{R} \text{ as } \varepsilon \to 0,
\end{equation}	
and
\begin{equation}
	\label{eq3.26}
	\underline{I}^*_{\varepsilon}(x,t)\to I^{*}_{2}(x,t)~ \text{ pointwise on } \bar{\Omega}\times \mathbb{R} \text{ as } \varepsilon \to 0,
\end{equation}	
where $I^{*}_{1}\in L^{\infty}$ and $I^{*}_{2}\in L^{\infty}$ satisfy (\ref{eq3.19}). Since $s(\mathcal{B} + \mathcal{F})>0$, according to Lemma \ref{lemma3.2} (i), we obtain that (\ref{eq3.19}) has a unique positive periodic solution. Consequently, $I^{*}_{1}=I^{*}_{2}\in \text{Int} C^{+}_{T}$.

Using  Dini's theorem and combining (\ref{eq3.18}) and (\ref{sub-super})-(\ref{eq3.26}), we have
\[
I(x, t) \to I^{*}_{1}(x,t) \text{ uniformly on } \bar{\Omega} \text{ as } t \to +\infty .
\]
Since
\[
S(x, t) + I(x, t) \to \frac{N}{|\Omega|}\text{ uniformly on } \bar{\Omega} \text{ as } t \to +\infty,
\]
it follows that
\[
S(x, t) \to \frac{N}{|\Omega|}-I^{*}_{1}(x,t) \text{ uniformly on } \bar{\Omega} \text{ as } t \to +\infty.
\]
By comparing (\ref{eq3.19}) with (\ref{eq3.22}), it follows from Lemma \ref{lemma3.4} that $I_{e}^*=I^{*}_{1}=I^{*}_{2}$ and $S_{e}^*=\frac{N}{|\Omega|}-I_{e}^*$. As a result,
\[
\lim_{t \to +\infty} \left\| \left(S(\cdot, t),I(\cdot, t)\right) - \left(S_{e}^*(\cdot, t), I_{e}^*(\cdot, t)\right) \right\|_{X\times X} = 0.
\]
This completes the proof.
\end{proof}

\section{Asymptotic profiles of the endemic equilibrium}
	\label{section 4}
In this section, we focus on the investigation of the asymptotic profiles of the endemic equilibrium of system (\ref{model1.2}) for small saturation parameters, and for both small and large diffusion rates. Throughout this section, we always suppose that \( d_I = d_S = d \). Under this assumption, if $R_0(d, N, m) > 1$, then it follows from Theorem \ref{theorem3.3} that system (\ref{model1.2}) admits the unique endemic equilibrium $(S_{e}^*, I_{e}^*)$. To emphasize the dependence of the endemic equilibrium  on $d$, we rewrite $(S_{e}^*, I_{e}^*)$ as $(S_{d}^*, I_{d}^*)$. According to Lemma \ref{lemma3.4}, one can obtain that $S_{d}^*(x, t) + I_{d}^*(x, t)=\frac{N}{|\Omega|}$, where $I_{d}^*$ is the unique positive $T$-periodic solution of the following equation
\begin{equation}
	\label{eq4.1}
\frac{\partial I}{\partial t} = d\mathcal{J}[I] +\left[\theta(x,t)-\gamma(x,t)-b(x,t)I\right]I, ~ x \in \Omega, t \in \mathbb{R},
\end{equation}
where $\theta(x,t)=\frac{\beta(x, t)\frac{N}{|\Omega|}}{m(x,t)+\frac{N}{|\Omega|}}$ and $b(x,t) = \frac{\beta(x, t)}{m(x,t)+\frac{N}{|\Omega|}}$.

Denote
\begin{align*}
	\Omega^+ &= \left\{ x \in \bar{\Omega} : \frac{1}{T}\int_0^T\theta(x,t)dt > \frac{1}{T}\int_0^T\gamma(x,t)dt \right\}, \\
	\Omega^- &= \left\{ x \in \bar{\Omega} : \frac{1}{T}\int_0^T\theta(x,t)dt \leq \frac{1}{T}\int_0^T\gamma(x,t)dt \right\}.
\end{align*}
For convenience, define
\[
H(x, t, \zeta) := \theta(x,t)\zeta - \gamma(x, t)\zeta - b(x,t)\zeta^2, \quad (x, t, \zeta) \in \bar{\Omega} \times \mathbb{R} \times \mathbb{R},
\]
and
\[
\tilde{H}(t, \zeta) :=\tilde{\theta}(t)\zeta - \tilde{\gamma}(t)\zeta - \tilde{b}(t)\zeta^2, \quad (t, \zeta) \in \mathbb{R} \times \mathbb{R},
\]
where
\[
\tilde{\theta}(t) = \frac{1}{|\Omega|} \int_{\Omega} \theta(x, t) dx, \quad \tilde{\gamma}(t) = \frac{1}{|\Omega|} \int_{\Omega} \gamma(x, t) dx,\quad \text{and} \quad \tilde{b}(t) = \frac{1}{|\Omega|} \int_{\Omega} b(x, t) dx.
\]

\begin{proposition}
	\label{proposition4.1}
	For each $x\in\bar{\Omega}$, the existence of the nonnegative periodic solutions for the equation
\begin{equation}
	\label{eq4.2}
	\frac{\partial \zeta}{\partial t} = H(x, t, \zeta)
\end{equation}
is given as follows.
\begin{enumerate}
	\item[{\rm (i)}]
	{\rm(\ref{eq4.2})} always has a nonnegative periodic solution 0{\rm ;}
	\item[{\rm (ii)}]
	If
	$\frac{1}{T}\int_0^T(\theta(x,t)-\gamma(x,t))dt>0$, then {\rm(\ref{eq4.2})} admits a unique positive \( T \)-periodic solution, denoted by \( \zeta^*(x, t)\){\rm ;}
	\item[{\rm (iii)}]
	If
	$\frac{1}{T}\int_0^T(\theta(x,t)-\gamma(x,t))dt\leq0$, then {\rm(\ref{eq4.2})} admits a unique nonnegative periodic solution 0.
\end{enumerate}
\end{proposition}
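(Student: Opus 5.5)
For fixed $x\in\bar\Omega$, equation (\ref{eq4.2}) is the scalar periodic logistic (Bernoulli) ODE
\[
\zeta'=a(t)\zeta-b(x,t)\zeta^2,\qquad a(t):=\theta(x,t)-\gamma(x,t),
\]
with $b(\cdot)=b(x,\cdot)$ continuous, $T$-periodic and strictly positive. Strict positivity of $b$ holds because $\beta>0$ and $m+\frac{N}{|\Omega|}$ is positive and bounded. The plan is to solve this equation explicitly and read off the periodic solutions.

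Statement (i) is trivial, since $H(x,t,0)=0$. Next we reduce to positive solutions. By uniqueness for the Lipschitz ODE, a nonnegative solution that vanishes at one time vanishes identically. So any nontrivial nonnegative periodic solution is strictly positive for all $t$. For such $\zeta$, set $w=1/\zeta$. Then $w$ solves the linear equation
\[
w'=-a(t)w+b(t),
\]
and the variation-of-constants formula gives
\[
w(t)=e^{-A(t)}\Big(w(0)+\int_0^t e^{A(s)}b(s)\,ds\Big),\qquad A(t)=\int_0^t a(r)\,dr.
\]
Conversely, any positive $w$ solving this linear equation gives the positive solution $\zeta=1/w$ of (\ref{eq4.2}).

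Periodicity $w(T)=w(0)$ is equivalent to
\[
w(0)\big(e^{A(T)}-1\big)=\int_0^T e^{A(s)}b(s)\,ds>0.
\]
Here $A(T)=\int_0^T(\theta-\gamma)\,dt$.
\begin{itemize}
\item If $A(T)>0$, this linear equation has exactly one solution $w(0)>0$. The resulting $w$ is positive for all $t$ by the formula, and $T$-periodic because the equation for $w$ is $T$-periodic and $w(T)=w(0)$. This gives existence and uniqueness of the positive $T$-periodic solution $\zeta^*(x,t)=1/w(t)$, proving (ii).
\item If $A(T)\le 0$, the left side is nonpositive for every $w(0)>0$ while the right side is positive, so no positive periodic solution exists. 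Combined with the reduction above, $0$ is the only nonnegative periodic solution, proving (iii).
\end{itemize}

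The only step requiring care is the reduction at the start, namely excluding nonnegative periodic solutions that touch zero. Uniqueness of solutions to the ODE handles it, so I do not expect a genuine obstacle.

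As an alternative to the explicit formula for (ii), one could use the Poincaré map $P$. It is increasing and strictly concave in $\zeta_0$, with $P'(0)=e^{A(T)}>1$, and it satisfies $P(K)<K$ for large $K$. These properties give exactly one positive fixed point. The explicit formula is simpler, so I would use it.
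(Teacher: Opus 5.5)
Your proof is correct and follows essentially the same route as the paper, which also solves the Bernoulli equation explicitly (the substitution $w=1/\zeta$ is exactly what produces its closed-form solution) and reads off existence and uniqueness of the positive periodic solution from the sign of $\int_0^T(\theta-\gamma)\,dt$. Your write-up is slightly more careful in two places: you explicitly exclude nonnegative periodic solutions that touch zero, and your periodicity condition $w(0)\bigl(e^{A(T)}-1\bigr)=\int_0^T e^{A(s)}b(s)\,ds$ handles the borderline case $A(T)=0$ cleanly, whereas the paper's assertion $\zeta_0(x)\neq 0$ fails exactly in that case.
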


\begin{proof}
Statement (i) is evidently true.

For each $x\in\bar{\Omega}$, (\ref{eq4.2}) can be rewritten as
\begin{equation*}
	\frac{\partial \zeta}{\partial t} = a(x,t)\zeta-b(x,t)\zeta^{2},
\end{equation*}
where $a(x,t) = \theta(x,t)-\gamma(x,t)$. It is clear that the above equation is of Bernoulli type. Consequently, solving it yields that (\ref{eq4.2}) has a nontrivial periodic solution $\zeta(x,t)$, which is given by
\[
\zeta(x,t) = \frac{e^{\int_0^t a(x,r) dr}}{\frac{1}{\zeta_{0}(x)} + \int_0^t b(x,s)e^{\int_0^s a(x,r) dr}ds},
\]
where $\zeta_{0}(x)=\frac{e^{\int_0^Ta(x,r)dr}-1}{\int_0^T b(x,s)e^{\int_0^s a(x,r) dr}ds}\neq0$. It is easy to obtain that (\ref{eq4.2}) admits a unique positive periodic solution $\zeta^{*}(x,t)$ if and only if $\frac{1}{T}\int_0^T(\theta(x,t)-\gamma(x,t))dt>0$, where $\zeta^{*}(x,t)=\zeta(x,t)$ with $\zeta_{0}(x)>0$. Hence, if $\frac{1}{T}\int_0^T(\theta(x,t)-\gamma(x,t))dt\leq0$, (\ref{eq4.2}) admits no positive periodic solution.

The proof of (ii) and (iii) is complete.
\end{proof}

\begin{theorem}
	\label{theorem4.1}
$I_{d}^*$ is non-decreasing in $N$, that is, $I_{d,N_{1}}^*\geq I_{d,N_{2}}^*$ provided $N_{1}\geq N_{2}>0$.
\end{theorem}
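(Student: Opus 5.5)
The plan is to compare the two periodic problems directly by a sub/super-solution argument, using the fact that the reaction term in (\ref{eq4.1}) is monotone in $N$. Write $K=\frac{N}{|\Omega|}$. Since $\theta=bK$, the nonlinearity in (\ref{eq4.1}) can be written as
\[
f_K(x,t,\zeta):=\theta\zeta-\gamma\zeta-b\zeta^2=\frac{\beta(x,t)\,\zeta\,(K-\zeta)}{m(x,t)+K}-\gamma(x,t)\zeta .
\]
A direct differentiation gives
\[
\partial_K f_K(x,t,\zeta)=\frac{\beta(x,t)\,\zeta\,\bigl(m(x,t)+\zeta\bigr)}{\bigl(m(x,t)+K\bigr)^2}\ \ge 0 \quad\text{for } \zeta\ge 0 .
\]
Hence $f_{K_1}(x,t,\zeta)\ge f_{K_2}(x,t,\zeta)$ for all $\zeta\ge0$ whenever $K_1\ge K_2$. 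This is the only structural input needed.

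First we fix conventions and dispose of the degenerate cases. We set $I^*_{d,N}\equiv0$ when $R_0(d,N,m)\le1$; in that case Lemma \ref{lemma3.2}(ii) shows that (\ref{eq4.1}) has no positive periodic solution. Let $N_1\ge N_2$. If $I^*_{d,N_2}\equiv0$, the claim is trivial, since $I^*_{d,N_1}\ge0$. Otherwise $R_0(d,N_2,m)>1$. By the monotonicity of $R_0$ in $N$ (Corollary \ref{corollary2.1}(iv), or directly from Theorem \ref{theorem2.2}(i) since $\theta$ is non-decreasing in $N$), we get $R_0(d,N_1,m)>1$. Theorem \ref{theorem3.3} and Lemma \ref{lemma3.2}(i) then give a unique positive periodic solution $I^*_{d,N_1}\in\operatorname{Int}C_T^+$.

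Next we build an ordered pair of periodic sub- and super-solutions for the $N_1$-problem. Set $K_i=N_i/|\Omega|$. By the monotonicity above, $\underline u:=I^*_{d,N_2}$ satisfies
\[
\partial_t\underline u-d\mathcal J[\underline u]=f_{K_2}(x,t,\underline u)\le f_{K_1}(x,t,\underline u),
\]
so it is a $T$-periodic sub-solution of (\ref{eq4.1}) with $N=N_1$. For the super-solution we take the constant $\bar u:=\max\{K_1,\|I^*_{d,N_2}\|_{C_T}\}$. Since Theorem \ref{theorem3.3} gives $I^*_{d,N_2}<K_2\le K_1$, this is simply $\bar u=K_1$. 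It satisfies $\mathcal J[\bar u]=0$ and $f_{K_1}(x,t,\bar u)\le-\gamma\bar u<0$, so it is a strict super-solution, and $\underline u\le\bar u$.

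Finally we conclude by comparison. The step requiring care is that the comparison must single out the unique positive periodic solution. I would argue by monotone iteration in $C_T$: starting from $\bar u$, the periodic sub/super-solution method yields a periodic solution $u$ of the $N_1$-problem with $\underline u\le u\le\bar u$. This is the same mechanism as \citep[Proposition 3.3]{Lin2023}, already used in Theorems \ref{theorem3.2}--\ref{theorem3.4}. The resulting $u$ is positive because $u\ge\underline u\in\operatorname{Int}C_T^+$, so by uniqueness in Lemma \ref{lemma3.2}(i), $u=I^*_{d,N_1}$. Hence $I^*_{d,N_1}\ge I^*_{d,N_2}$.

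An alternative for this last step is to use the global attractivity in Lemma \ref{lemma3.2}(i). Solve the $N_1$-equation with initial datum $I^*_{d,N_2}(\cdot,0)$. The parabolic-type comparison principle for the nonlocal operator keeps the solution above $I^*_{d,N_2}(\cdot,t)$ for all $t\ge0$. Since this solution converges to $I^*_{d,N_1}$, letting $t\to\infty$ along multiples of $T$ gives the same inequality.
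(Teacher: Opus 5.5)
Your proposal is correct and is essentially the paper's argument. Both rest on the same monotonicity of $[\theta_N-\gamma-b_N\zeta]\zeta$ in $N$ for $\zeta\ge0$; the paper views $I^*_{d,N_1}$ as a super-solution of the $N_2$-problem and invokes \citep[Proposition 3.3]{Lin2023}, whereas you view $I^*_{d,N_2}$ as a sub-solution of the $N_1$-problem. You also spell out two things the paper leaves implicit: existence of $I^*_{d,N_1}$ via monotonicity of $R_0$ in $N$, and the case $R_0(d,N_2,m)\le1$.

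Of your two closing arguments, the global-attractivity one is the airtight one. A monotone-iteration limit for a nonlocal problem need not be continuous, so identifying it with the solution that is unique in $\operatorname{Int}C_T^+$ would need an extra step.
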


\begin{proof}
Recall that $I_{d}^*(x,t)$ satisfies (\ref{eq4.1}). To highlight the dependence of $I_{d}^*(x,t)$, $\theta(x,t)$, and $b(x,t)$ on $N$, we denote them by $I_{d,N}^*$, $\theta_{N}$, and $b_{N}$. If $N_{1}\geq N_{2}>0$, then we have
\begin{equation*}
		\begin{aligned}
	\frac{\partial I_{d,N_{1}}^*}{\partial t} &= d\mathcal{J}[I_{d,N_{1}}^*] +\left[\theta_{N_{1}}-\gamma(x,t)-b_{N_{1}}I_{d,N_{1}}^*\right]I_{d,N_{1}}^*\\
	&\geq d\mathcal{J}[I_{d,N_{1}}^*] +\left[\theta_{N_{2}}-\gamma(x,t)-b_{N_{2}}I_{d,N_{1}}^*\right]I_{d,N_{1}}^*.
\end{aligned}
\end{equation*}
Going further, we obtain
\begin{equation*}
	\begin{aligned}
		&\frac{\partial I_{d,N_{1}}^*}{\partial t}-d\mathcal{J}[I_{d,N_{1}}^*] -\left[\theta_{N_{2}}-\gamma(x,t)-b_{N_{2}}I_{d,N_{1}}^*\right]I_{d,N_{1}}^*\\
		\geq&0=	\frac{\partial  I_{d,N_{2}}^*}{\partial t}-d\mathcal{J}[ I_{d,N_{2}}^*] -\left[\theta_{N_{2}}-\gamma(x,t)-b_{N_{2}} I_{d,N_{2}}^*\right] I_{d,N_{2}}^*.
	\end{aligned}
\end{equation*}
By \citep[Proposition 3.3]{Lin2023}, $I_{d,N_{1}}^*\geq I_{d,N_{2}}^*$. This completes the proof.
\end{proof}

\begin{theorem}
	\label{theorem4.1-1}
	Suppose that $\int_{\Omega}\int_0^T \beta(x,t)dtdx>\int_{\Omega}\int_0^T \gamma(x,t)dtdx$. Then the unique endemic equilibrium of system
	{\rm(\ref{model1.2})} satisfies
	\[
	\lim_{\|m\|_{C_T} \to 0} \left\| (S_d^*,I_d^*) - \left(\frac{N}{|\Omega|}-\zeta_{0},\zeta_{0}\right) \right\|_{C_T\times C_T} = 0,
	\]	
	where $\zeta_{0}(x,t)$ is the unique positive $T$-periodic solution of the following equation
	\begin{equation}
		\label{eq4.1-1}
		\frac{\partial \zeta}{\partial t} = d\mathcal{J}[\zeta] +\left[\beta(x,t)-\gamma(x,t)-\frac{|\Omega|\beta(x,t)}{N}\zeta\right]\zeta, ~ x \in \Omega, t \in \mathbb{R}.
	\end{equation}
\end{theorem}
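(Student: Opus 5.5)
Since $d_S=d_I=d$, Lemma \ref{lemma3.4} gives $S_d^*=\frac{N}{|\Omega|}-I_d^*$. It therefore suffices to prove $I_d^*\to\zeta_0$ in $C_T$ as $\|m\|_{C_T}\to0$. Recall that $I_d^*$ solves (\ref{eq4.1}) with coefficients $\theta_m=\frac{\beta N/|\Omega|}{m+N/|\Omega|}$ and $b_m=\frac{\beta}{m+N/|\Omega|}$. As $\|m\|_{C_T}\to0$ these converge uniformly to $\beta$ and $\frac{|\Omega|\beta}{N}$, which are the coefficients of (\ref{eq4.1-1}).

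The first step is to check that everything is well defined. By \citep[Proposition 2.5]{Feng2025} (equivalently Corollary \ref{corollary2.1} (i) with $\theta=\beta$), the hypothesis gives $\hat R_0>1$, so $s(\mathcal{B}+\hat{\mathcal{F}})>0$ with $\hat{\mathcal F}[u]=\beta u$. Lemma \ref{lemma3.2} (i) then yields a unique $\zeta_0\in\operatorname{Int}C_T^+$ solving (\ref{eq4.1-1}). Theorem \ref{theorem2.2} (iv) gives $R_0(d,N,m)\to\hat R_0>1$, so for $\|m\|_{C_T}$ small, Theorem \ref{theorem3.3} supplies the endemic equilibrium.

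The main step is a sandwich by sub- and super-solutions, as in the proof of Theorem \ref{theorem3.4}. Fix a small $\delta>0$. For all $m$ with $\|m\|_{C_T}$ small enough we have
\[
\beta-\delta\le\theta_m\le\beta,\qquad \frac{|\Omega|\beta}{N}-\delta\le b_m\le\frac{|\Omega|\beta}{N}.
\]
Let $\underline\zeta_\delta$ and $\bar\zeta_\delta$ be the unique positive periodic solutions of the logistic problems
\[
\partial_t\zeta=d\mathcal J[\zeta]+\Big[\beta-\delta-\gamma-\frac{|\Omega|\beta}{N}\zeta\Big]\zeta
\quad\text{and}\quad
\partial_t\zeta=d\mathcal J[\zeta]+\Big[\beta-\gamma-\Big(\frac{|\Omega|\beta}{N}-\delta\Big)\zeta\Big]\zeta .
\]
Both exist by Lemma \ref{lemma3.2} (i): for the second, $s(\mathcal B+\hat{\mathcal F})>0$ directly, and for the first, $s(\mathcal B+\hat{\mathcal F}-\delta)=s(\mathcal B+\hat{\mathcal F})-\delta>0$ for $\delta$ small. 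Since $I_d^*$ is a supersolution of the first problem and a subsolution of the second, the comparison result \citep[Proposition 3.3]{Lin2023} gives $\underline\zeta_\delta\le I_d^*\le\bar\zeta_\delta$.

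It remains to show $\underline\zeta_\delta,\bar\zeta_\delta\to\zeta_0$ in $C_T$ as $\delta\to0$; this is the main obstacle. By \citep[Proposition 3.3]{Lin2023}, $\underline\zeta_\delta$ is nondecreasing and $\bar\zeta_\delta$ is nonincreasing as $\delta\downarrow0$, and both are bounded between $\underline\zeta_{\delta_0}>0$ and $\bar\zeta_{\delta_0}$. So they converge pointwise to bounded positive measurable periodic functions. Passing to the limit in the integrated (mild, variation-of-constants) form of the equations by dominated convergence shows that both limits are positive periodic solutions of (\ref{eq4.1-1}). Uniqueness from Lemma \ref{lemma3.2} (i) then identifies both with $\zeta_0$. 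The delicate point is that uniqueness must hold among bounded measurable positive periodic solutions, not only continuous ones. The limits are bounded below by $\underline\zeta_{\delta_0}>0$, and I expect to obtain their continuity from the mild formulation: the $x$-dependence enters only through continuous coefficients and the convolution term. Exactly this type of argument was used in the proof of Theorem \ref{theorem3.4}. Once the limits are continuous and equal to $\zeta_0$, Dini's theorem upgrades pointwise monotone convergence to uniform convergence on $\bar\Omega\times[0,T]$.

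Hence $\|I_d^*-\zeta_0\|_{C_T}\le\max\{\|\bar\zeta_\delta-\zeta_0\|_{C_T},\|\underline\zeta_\delta-\zeta_0\|_{C_T}\}$, which is arbitrarily small once $\|m\|_{C_T}$ is small. Finally, $S_d^*=\frac{N}{|\Omega|}-I_d^*\to\frac{N}{|\Omega|}-\zeta_0$ in $C_T$, which proves the claim.
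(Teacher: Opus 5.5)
Your proof is correct and follows essentially the same route as the paper: you sandwich $I_d^*$ between the positive periodic solutions of two logistic problems via \citep[Proposition 3.3]{Lin2023}, take monotone pointwise limits, identify both limits with $\zeta_0$ by uniqueness, and upgrade to uniform convergence with Dini's theorem. The only difference is in the comparison problems: the paper uses the $m$-dependent pairs $(\beta, b)$ and $(\theta, |\Omega|\beta/N)$ together with monotonicity in the partially ordered parameter $m$, whereas your $\delta$-perturbed, $m$-independent problems form a totally ordered family, which makes the monotone-limit and Dini step cleaner; you also correctly flag the continuity and uniqueness issue for the pointwise limits, which the paper passes over.
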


\begin{proof} Note that $\int_{\Omega}\int_0^T \beta(x,t)dtdx>\int_{\Omega}\int_0^T \gamma(x,t)dtdx$. By virtue of \citep[Proposition 2.5]{Feng2025}, one can obtain that $\hat{R}_{0}>1$. Following the argument in \citep[Section 4]{Lin2023}, (\ref{eq4.1-1}) admits the unique positive $T$-periodic solution, denoted by $\zeta_{0}(x,t)$. Moreover,  according to Theorem \ref{theorem2.2} (iv), for sufficiently small $m\in C_T^+$, it follows that $R_{0}(d,N,m)>1$. Then, the monotonicity of $R_{0}(d,N,m)$ with respect to $m$ (see Corollary~\ref{corollary2.1}(v)) implies that for all sufficiently small $m\in C_T^+$, there exists a unique endemic equilibrium $(S_{d}^*, I_{d}^*)$ by Theorem \ref{theorem3.3}.
	
We now introduce two auxiliary problems as follows.
\begin{equation}
	\label{eq4.1-2}
	\frac{\partial \bar{\zeta}}{\partial t} = d\mathcal{J}[\bar{\zeta}] + (\beta(x,t)-\gamma(x,t)) \bar{\zeta} - b(x,t) \bar{\zeta}^2, \quad x \in \Omega, \, t > 0,
\end{equation}
and
\begin{equation}
	\label{eq4.1-3}
	\frac{\partial \underline{\zeta}}{{\rm d}t} = d\mathcal{J}[\underline{\zeta}] + (\theta(x,t)-\gamma(x,t))\underline{\zeta}  - \frac{|\Omega|\beta(x,t)}{N} \underline{\zeta}^2, \quad x \in \Omega, \, t > 0.
\end{equation}	
Recall that $\hat{R}_{0}>1$ and $R_{0}(d,N,m)>1$. From \citep[Proposition 2.4(i)]{Feng2025}, Lemma \ref{lemma2.3} and Lemma \ref{lemma3.2}, equations (\ref{eq4.1-2}) and (\ref{eq4.1-3}) admit unique positive periodic solutions, denoted by $\bar{\zeta}^*$ and $\underline{\zeta}^*$, respectively. It is easy to verify that $\underline{\zeta}^*$ and $\bar{\zeta}^*$ are a pair of sub-super solutions of (\ref{eq4.1}). According to \citep[Proposition 3.3]{Lin2023}, it follows that
\begin{equation}
	\label{th4.2-1}
	\underline{\zeta}^*(x,t) \leq I^*_{d}(x,t) \leq \bar{\zeta}^*(x,t), \quad x \in \bar{\Omega}, \, t \in [0, T].
\end{equation}
By \citep[Proposition 3.3]{Lin2023} again, we have $\bar{\zeta}^*$ is non-decreasing with respect to $m$, $\underline{\zeta}^*$ is non-increasing with respect to $m$, and $\bar{\zeta}_{m_1}^* \geq \underline{\zeta}_{m_2}^*$ for any $m_1\in  C^+_T$ and $m_2\in  C^+_T$. Therefore, for each point $(x, t) \in \bar{\Omega} \times [0, T]$,
\[
\bar{\zeta}^*(x, t) \to \zeta_{1}(x, t), \quad \text{and} \quad \underline{\zeta}^*(x, t) \to \zeta_{2}(x, t) \quad \text{as } \|m\|_{C_T} \to 0,
\]
where $\zeta_{1}(x, t)$ and $\zeta_{2}(x, t)$ satisfy (\ref{eq4.1-1}), and $\zeta_{1}(x, t)\ge \zeta_{2}(x, t)>0$. By the uniqueness of the positive $T$-periodic solution to (\ref{eq4.1-1}), we conclude that $\zeta_{0} = \zeta_{1} = \zeta_{2}$. By Dini's theorem, it follows that
\begin{equation}
	\label{th4.2-2}
	\lim_{\|m\|_{C_T} \to 0} \left\|\underline{\zeta}^* - \zeta_{0} \right\|_{C_T} = \lim_{\|m\|_{C_T} \to 0} \left\| \bar{\zeta}^* - \zeta_{0} \right\|_{C_T} = 0.
\end{equation}
From (\ref{th4.2-1}), (\ref{th4.2-2}), and $S_{d}^* + I_{d}^*=\frac{N}{|\Omega|}$, we have
	\[
\lim_{\|m\|_{C_T} \to 0} \left\| (S_d^*,I_d^*) - \left(\frac{N}{|\Omega|}-\zeta_{0},\zeta_{0}\right) \right\|_{C_T\times C_T} = 0.
\]		
\end{proof}

If $\Omega^+$ is nonempty, according to Proposition \ref{proposition4.1}, we can define the following function
\begin{equation}
	\label{eq4.3}
	\zeta_{T}(x, t) =
	\begin{cases}
		\zeta^{*}, & x\in\Omega^+, \; t \in [0, T], \\[10pt]
		0, & x\in\Omega^-, \; t \in [0, T].
	\end{cases}
\end{equation}
From the expression of $\zeta^{*}$, it is easy to verify that $\zeta_{T}\in C_{T}^{+}$ and satisfies (\ref{eq4.2}). Furthermore, it is direct from \citep[Theorem 6.2]{Shen2019} that $\zeta_{T}$ is globally asymptotically stable with respect to all initial functions which are strictly positive everywhere.

\begin{theorem}
	\label{theorem4.2}
	Assume that $\Omega^+$ is nonempty.
	The unique endemic equilibrium of system
	{\rm(\ref{model1.2})} satisfies
\[
\lim_{d \to 0} \left\| (S_d^*,I_d^*) - \left(\frac{N}{|\Omega|}-\zeta_{T},\zeta_{T}\right) \right\|_{C_T\times C_T} = 0,
\]	
where $\zeta_{T}\geq,\not\equiv0$ is given in {\rm(\ref{eq4.3})}.

\end{theorem}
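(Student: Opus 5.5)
We need the limit as $d\to 0$ of $I_d^*$, the positive periodic solution of (\ref{eq4.1}). Since $S_d^*=\frac{N}{|\Omega|}-I_d^*$, it suffices to prove $\|I_d^*-\zeta_T\|_{C_T}\to 0$.

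\textbf{Existence for small $d$.} Because $\Omega^+\neq\emptyset$, Theorem \ref{theorem2.2} (ii) gives
\[
\lim_{d\to0}R_0=\max_{x}\frac{\int_0^T\theta\,dt}{\int_0^T\gamma\,dt}>1 .
\]
Hence $R_0(d,N,m)>1$ for all small $d$, and Theorem \ref{theorem3.3} provides $I_d^*$. The comparison principle \citep[Proposition 3.3]{Lin2023} applied to (\ref{eq4.1}) yields the uniform bound $0<I_d^*\le M:=\max\frac{(\theta-\gamma)^+}{b}+1$ (a constant supersolution). This bound is independent of $d$.

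\textbf{Sub-super solutions.} Following \cite{Lin2023,Lin2024}, we build a pair of $d$-independent comparison functions for each $\varepsilon>0$. Write $\mathcal{J}[u]=\int_\Omega J(x-y,t)u(y)dy-a(x,t)u$ with $a(x,t)=\int_\Omega J(x-y,t)dy$.

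For the super-solution, let $\bar\zeta_\varepsilon(x,t)$ be the positive periodic solution of the ODE $\zeta_t=H(x,t,\zeta)+\varepsilon\zeta+\varepsilon$. This ODE always admits a positive periodic solution, continuous in $x$, and $\bar\zeta_\varepsilon\to\zeta_T$ uniformly as $\varepsilon\to0$. The uniformity needs care near $\partial\Omega^+$, where $\int_0^T(\theta-\gamma)dt=0$. There the explicit Bernoulli/Riccati formula of Proposition \ref{proposition4.1} shows that $\zeta^*$ tends to $0$ continuously, which is exactly why $\zeta_T\in C_T$. Since $d\mathcal{J}[\bar\zeta_\varepsilon]\ge -d\|a\|_\infty\|\bar\zeta_\varepsilon\|_\infty$, we get $\partial_t\bar\zeta_\varepsilon-d\mathcal{J}[\bar\zeta_\varepsilon]-H\ge \varepsilon\bar\zeta_\varepsilon+\varepsilon-dC_\varepsilon\ge0$ for $d$ small. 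Thus $\bar\zeta_\varepsilon$ is a supersolution, and $I_d^*\le\bar\zeta_\varepsilon$ for $d\le d_\varepsilon$.

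For the sub-solution, take $\underline\zeta_\varepsilon$ as the nonnegative periodic solution of $\zeta_t=H(x,t,\zeta)-\varepsilon\zeta$, i.e.\ $\zeta^*$ with $\theta-\gamma$ replaced by $\theta-\gamma-\varepsilon$. This function is positive on $\Omega^+_\varepsilon=\{\frac1T\int_0^T(\theta-\gamma)>\varepsilon\}$ and zero elsewhere; it is continuous and converges to $\zeta_T$ uniformly as $\varepsilon\to0$. Using $\int_\Omega J(x-y,t)\underline\zeta_\varepsilon(y)dy\ge0$, we have $d\mathcal{J}[\underline\zeta_\varepsilon]\ge -d\|a\|_\infty\underline\zeta_\varepsilon$. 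Hence $\partial_t\underline\zeta_\varepsilon-d\mathcal{J}[\underline\zeta_\varepsilon]-H(\cdot,\underline\zeta_\varepsilon)\le(-\varepsilon+d\|a\|_\infty)\underline\zeta_\varepsilon\le0$ once $d<\varepsilon/\|a\|_\infty$. So $\underline\zeta_\varepsilon$ is a subsolution of (\ref{eq4.1}).

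\textbf{Comparison.} We want $\underline\zeta_\varepsilon\le I_d^*$. The subsolution vanishes on part of $\Omega$, so we cannot simply quote uniqueness. Instead, run the time-periodic problem (\ref{eq4.1}) from the initial datum $\underline\zeta_\varepsilon(\cdot,0)$. The resulting solution is monotone increasing along periods. By Lemma \ref{lemma3.2} (i) it converges to $I_d^*$, since the initial datum is nonzero on the open set $\Omega^+_\varepsilon$ (nonempty for small $\varepsilon$). This gives $\underline\zeta_\varepsilon\le I_d^*$. Combining with the super-solution bound,
\[
\|I_d^*-\zeta_T\|_{C_T}\le\max\{\|\bar\zeta_\varepsilon-\zeta_T\|,\|\underline\zeta_\varepsilon-\zeta_T\|\}
\]
for $d$ small. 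Letting $d\to0$ and then $\varepsilon\to0$ proves the theorem.

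\textbf{Main obstacle.} The hard part will be the uniform convergence $\bar\zeta_\varepsilon,\underline\zeta_\varepsilon\to\zeta_T$ in $C_T$, especially near the degenerate set where $\int_0^T(\theta-\gamma)=0$. I would handle it through the explicit formula: the factor $e^{\int_0^T a}-1\to0$ continuously, while the denominators stay bounded away from zero. This gives equicontinuity, and then Dini's theorem applies using the monotonicity in $\varepsilon$.
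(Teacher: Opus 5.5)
Your proof is correct, apart from one inequality written in the wrong direction. For the super-solution you need an \emph{upper} bound on $d\mathcal{J}[\bar\zeta_\varepsilon]$, namely $d\mathcal{J}[\bar\zeta_\varepsilon]\le d\int_\Omega J(x-y,t)\bar\zeta_\varepsilon(y,t)\,dy\le d\|\bar\zeta_\varepsilon\|_\infty$, which holds since $\int_\Omega J(x-y,t)\,dy\le 1$. The lower bound you wrote does not yield $\partial_t\bar\zeta_\varepsilon-d\mathcal{J}[\bar\zeta_\varepsilon]-H\ge0$. With the correct bound you get $\partial_t\bar\zeta_\varepsilon-d\mathcal{J}[\bar\zeta_\varepsilon]-H\ge\varepsilon-d\|\bar\zeta_\varepsilon\|_\infty\ge0$ for small $d$, as you intended.

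Your route differs from the paper's in how the sandwich for $I_d^*$ is built. The paper uses two $d$-dependent auxiliary problems:
\begin{itemize}
\item the ODE \eqref{eq4.5}, where $d\mathcal{J}$ is replaced by $-d\,\underline I_d$. This is your $\underline\zeta_\varepsilon$ with $\varepsilon=d$, and it is a sub-solution for every $d$ because $\int_\Omega J(x-y,t)\,dy\le1$, so no extra parameter is needed on that side;
\item the nonlocal problem \eqref{eq4.4}, obtained by discarding the loss term of $\mathcal{J}$.
\end{itemize}
It gets $\underline I_d^*\le I_d^*\le\bar I_d^*$ by comparing the three Cauchy problems from a common initial datum and letting $t\to+\infty$. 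This requires global attractivity for all three problems: Lemma \ref{lemma3.2} for \eqref{eq4.1}, a separate existence and attractivity argument for \eqref{eq4.4}, and \citep[Theorem 6.2]{Shen2019} for the degenerate ODE \eqref{eq4.5}. It then uses monotonicity of $\bar I_d^*$ and $\underline I_d^*$ in $d$, identifies both pointwise limits with $\zeta_T$ via the limiting ODE \eqref{eq4.8}, and concludes by Dini's theorem.

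You instead take an explicit ODE super-solution with the small source $\varepsilon\zeta+\varepsilon$. You obtain the comparison by starting \eqref{eq4.1} from the sub- or super-solution itself. As a result, the only attractivity result you need is Lemma \ref{lemma3.2}(i) for \eqref{eq4.1}, and every limit identification happens at the ODE level. The cost is a second parameter and a double limit ($d\to0$, then $\varepsilon\to0$). The paper has a single monotone parameter but relies on more external machinery for the auxiliary problems.

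The ``main obstacle'' you flag is resolved exactly as you suggest: use monotonicity in $\varepsilon$ of $\bar\zeta_\varepsilon$ and $\underline\zeta_\varepsilon$, the continuity of $\zeta_T$, and Dini's theorem. No explicit equicontinuity estimate near $\partial\Omega^+$ is needed.
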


\begin{proof}
Since $\Omega^+$ is nonempty, then $\max\limits_{x\in\bar{\Omega}}\frac{1}{T}\int_0^T(\theta(x,t)-\gamma(x,t))dt>0$. In view of Theorem \ref{theorem2.2}, one can obtain that
\[
\lim\limits_{d\to 0} R_0(d, N, m)
= \max\limits_{x\in\bar{\Omega}} \dfrac{\int_0^T \theta(x,t)dt}{\int_0^T \gamma(x,t)dt}>1,
\]
which indicates that there exists $\delta_{1} > 0$ such that $R_0(d, N, m)>1$ when $0 < d < \delta_{1}$. Then by Theorem \ref{theorem3.3}, there exists the unique endemic equilibrium $(S_{d}^*, I_{d}^*)$ for $0 < d < \delta_{1}$.

Consider the following two auxiliary problems
\begin{equation}
	\label{eq4.4}
	\frac{\partial \bar{I}_d}{\partial t} = d \int_{\Omega} J(x - y, t) \bar{I}_d(y, t) dy + a(x, t) \bar{I}_d - b(x,t) \bar{I}_d^2, \quad x \in \Omega, \, t > 0,
\end{equation}
and
\begin{equation}
		\label{eq4.5}
	\frac{{\rm d} \underline{I}_d}{{\rm d}t} = -d \underline{I}_d + a(x, t) \underline{I}_d - b(x,t) \underline{I}_d^2, \quad x \in \Omega, \, t > 0.
\end{equation}
Since $\Omega^+$ is nonempty, a similar analysis to that used for the existence and uniqueness of $(S_{d}^*, I_{d}^*)$ yields that there exists $\delta_{2} > 0$ such that the equation (\ref{eq4.4}) admits a unique positive T-periodic solution $\bar{I}_d^*$ when $0 < d < \delta_{2}$. Moreover, it follows from the proof of Proposition \ref{proposition4.1} that (\ref{eq4.5}) admits a nonnegative periodic solution $\underline{I}_d^*\geq,\not\equiv0$, which is defined by the same way as $\zeta_{T}$.

Let $\bar{I}_d(x,t;I_{0})$ be the solution of  (\ref{eq4.4}) with initial data $I(x,0)=I_{0}(x)$, $\underline{I}_d(x,t;I_{0})$ the solution of  (\ref{eq4.5}) with initial data $I_{0}(x)$, and $I_{d}(x,t;I_{0})$ the solution of  (\ref{eq4.1}) with initial data $I_{0}(x)$. By applying the comparison principle, we have
\begin{equation}
	\label{eq4.6}
	\underline{I}_d(x,t;I_{0}) \leq I_{d}(x,t;I_{0}) \leq \bar{I}_d(x,t;I_{0}), \quad x \in \bar{\Omega}, \, t\geq0.
\end{equation}
Recall that $R_0(d, N, m)>1$ when $0 < d < \delta_{1}$. Then we get
\begin{equation*}
	\|I_{d}(\cdot,t;I_{0}) - I_{d}^*(\cdot, t)\|_X \to 0 \quad \text{as } t \to +\infty,~\forall d \in (0, \delta_{1}).
\end{equation*}
Similarly, one can derive that
\begin{equation*}
	\|\bar{I}_d(\cdot,t;I_{0}) - \bar{I}_{d}^*(\cdot, t)\|_X \to 0 \quad \text{as } t \to +\infty,~\forall d \in (0, \delta_{2}).
\end{equation*}
Additionally, in view of \citep[Theorem 6.2]{Shen2019}, it follows that
\begin{equation*}
	\|\underline{I}_d(\cdot,t;I_{0}) - \underline{I}_{d}^*(\cdot, t)\|_X \to 0 \quad \text{as } t \to +\infty.
\end{equation*}
Let $\delta_{min} = \min\{\delta_1, \delta_2\}$. For $d \in (0, \delta_{min})$, letting $t\to +\infty$ in (\ref{eq4.6}), we obtain
\begin{equation}
	\label{eq4.7}
	\underline{I}^*_d(x,t) \leq I^*_{d}(x,t) \leq \bar{I}^*_d(x,t), \quad x \in \bar{\Omega}, \, t \in [0, T].
\end{equation}
By the comparison principle, a similar discussion as above gives that $\bar{I}_d^*$ is non-decreasing in $d$, $\underline{I}_d^*$ is non-increasing in $d$, and $\bar{I}_{d_1}^* \geq \underline{I}_{d_2}^*$ for any $d_1 > 0$ and $d_2 > 0$. Hence, for each point $(x, t) \in \bar{\Omega} \times [0, T]$,
\[
\bar{I}_d^*(x, t) \to \bar{\zeta}_{T}(x, t), \quad \text{and} \quad \underline{I}_d^*(x, t) \to \underline{\zeta}_{T}(x, t) \quad \text{as } d \to 0,
\]
where $\underline{\zeta}_{T}(x, t) \leq \bar{\zeta}_{T}(x, t)$ for each $(x, t) \in \bar{\Omega} \times [0, T]$. Clearly, $\underline{\zeta}_{T}(x, t)$ and $\bar{\zeta}_{T}(x, t)$ satisfy
\begin{equation}
	\label{eq4.8}
	\begin{cases}
	\dfrac{\partial \zeta}{\partial t} = H(x,t,\zeta)=a(x, t) \zeta - b(x,t) \zeta^2, & x \in \Omega, \; t \in \mathbb{R}, \\[6pt]
	\zeta(x, t) = \zeta(x, t + T), & x \in \Omega, \; t \in \mathbb{R}.
\end{cases}
\end{equation}
Combining Proposition \ref{proposition4.1} and the definition of $\zeta_{T}$, it is easy to verify that $\underline{\zeta}_{T} = \zeta_{T}$.  Next, we claim that $\bar{\zeta}_{T} = \zeta_{T}$. Otherwise, there exists $(x_0, t_0)$ such that $\bar{\zeta}_{T}(x_0, t_0) > \zeta_{T}(x_0, t_0) \geq 0$ since $\bar{\zeta}_{T}(x, t)\geq\underline{\zeta}_{T}(x, t)=\zeta_{T}$, which demonstrates $\frac{1}{T}\int_0^Ta(x_0,t)dt>0$. It is direct from the uniqueness
of the positive periodic solution of system (\ref{eq4.8}) and the definition of $\zeta_{T}$ that $\bar{\zeta}_{T}(x_0, t_0) = \zeta_{T}(x_0, t_0)$, a contradiction. It follows from Dini's theorem that
\begin{equation}
		\label{eq4.9}
	\lim_{d \to 0} \left\|\underline{I}_d^* - \zeta_{T} \right\|_{C_T} = \lim_{d \to 0} \left\| \bar{I}_d^* - \zeta_{T} \right\|_{C_T} = 0.
\end{equation}
By (\ref{eq4.7}) and (\ref{eq4.9}), we obtain that
\begin{equation*}
	\lim_{d \to 0} \left\| I_d^* - \zeta_{T} \right\|_{C_T} = 0.
\end{equation*}
Then, together with $S_{d}^* + I_{d}^*=\frac{N}{|\Omega|}$, we get
\[
\lim_{d \to 0} \left\| (S_d^*,I_d^*) - \left(\frac{N}{|\Omega|}-\zeta_{T},\zeta_{T}\right) \right\|_{C_T\times C_T} = 0.
\]	
\end{proof}

\begin{remark}
From Theorem {\rm\ref{theorem4.1-1}}, we see that as the saturation parameter tends to zero, the unique endemic equilibrium reduces to that of the standard-incidence model.
Based on the expression for $\zeta^*$, we find that $\zeta^*$ increases with $N$ and decreases with $m$. This implies that, in the diffusion-limited regime, increasing the total population size exacerbates disease prevalence and makes disease eradication more difficult, whereas enhancing the saturation effect suppresses disease transmission. Therefore, from the perspective of control strategies, regulating population density and strengthening the saturation effect via a series of public health interventions, such as contact restrictions and isolation policies, can both serve as effective measures to reduce the prevalence at the endemic equilibrium.
\end{remark}

\begin{theorem}
	\label{theorem4.3}
	Suppose that $\int_{\Omega}\int_0^T \theta(x,t)dtdx>\int_{\Omega}\int_0^T \gamma(x,t)dtdx$. Then the endemic equilibrium of system {\rm(\ref{model1.2})} satisfies
	\[
	\lim_{d \to +\infty} \left\| (S_d^*,I_d^*) - \left(\frac{N}{|\Omega|}-\zeta_{3},\zeta_{3}\right) \right\|_{C_T\times C_T} = 0,
	\]
	where \( \zeta_{3}(t) \) is the unique positive $T$-periodic solution of $\frac{{\rm d}\zeta}{{\rm d}t}$=$\tilde{H}(t, \zeta)$.
\end{theorem}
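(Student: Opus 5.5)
The plan is to decompose the endemic profile $I_d^*$ into its spatial mean plus a mean-zero fluctuation. I would show that the fluctuation vanishes uniformly as $d\to+\infty$, and that the mean converges to the unique positive $T$-periodic solution $\zeta_3$ of $\zeta'=\tilde H(t,\zeta)$. The relation $S_d^*=\frac{N}{|\Omega|}-I_d^*$ (Lemma \ref{lemma3.4}) then gives the statement for $S_d^*$.

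\textbf{Existence for large $d$.} By Theorem \ref{theorem2.2}(ii) and the hypothesis,
\[
\lim_{d\to+\infty}R_0(d,N,m)=\frac{\int_\Omega\int_0^T\theta\,dt\,dx}{\int_\Omega\int_0^T\gamma\,dt\,dx}>1 .
\]
Hence $R_0>1$ for all $d$ large, and Theorem \ref{theorem3.3} provides $I_d^*\in\operatorname{Int}C_T^+$ solving (\ref{eq4.1}). From the proof of Theorem \ref{theorem3.3} we also have the uniform bound $0<I_d^*<\frac{N}{|\Omega|}$.

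\textbf{Step 1: the fluctuation vanishes.} Write
\[
w_d(t)=\frac1{|\Omega|}\int_\Omega I_d^*(x,t)\,dx,\qquad v_d=I_d^*-w_d .
\]
Then $v_d$ is $T$-periodic, has zero spatial mean, and satisfies
\[
\partial_t v_d=d\mathcal J[v_d]+h_d,\qquad h_d=H(x,t,I_d^*)-\frac1{|\Omega|}\int_\Omega H(y,t,I_d^*)\,dy .
\]
Here $|h_d|\le C\max_{x\in\bar\Omega} I_d^*(x,t)$ with $C$ independent of $d$, since $H(x,t,\zeta)/\zeta$ is bounded on $[0,N/|\Omega|]$.
\begin{itemize}
\item $L^2$ estimate. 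Exactly as for $w(t)$ in the proof of Theorem \ref{theorem3.1}, Lemma \ref{lemma2.1} gives $\frac{d}{dt}\|v_d\|_{L^2}^2\le-2d\alpha_0\|v_d\|_{L^2}^2+2\|v_d\|_{L^2}\|h_d\|_{L^2}$. Evaluating at the maximum point over a period yields $\|v_d\|_{L^2}\le \frac{C}{d\alpha_0}\max_{x}I_d^*$.
\item Sup estimate. Use the variation-of-constants representation as in the $\hat S$ estimate of Theorem \ref{theorem3.1}, with decay factor $e^{-d\alpha_0(t-s)}$. The nonlocal term contributes $d\,C\|v_d\|_{L^2}$ and the forcing contributes $\|h_d\|$, so after integration $\|v_d(\cdot,t)\|_X\le C\|v_d\|_{L^2}+\frac{C}{d}\|h_d\|$.
\end{itemize}
Combining the two, periodicity gives the key relative bound
\[
\|v_d\|_{C_T}\le \frac{C}{d}\,\|I_d^*\|_{C_T}\le \frac{C}{d}\bigl(\|w_d\|_{C_T}+\|v_d\|_{C_T}\bigr),
\]
hence $\|v_d\|_{C_T}\le \frac{C'}{d}\|w_d\|_{C_T}$ for $d$ large.

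\textbf{Step 2: the mean ODE and the lower bound (main obstacle).} Integrating (\ref{eq4.1}) over $\Omega$ (the $\mathcal J$-term integrates to zero by symmetry of $J$) gives
\[
w_d'=\tilde H(t,w_d)+E_d(t),\qquad |E_d|\le C\|v_d\|_{C_T}\le \frac{C''}{d}\|w_d\|_{C_T}.
\]
The delicate point is to rule out $w_d\to0$, because the error is only relatively small. I would proceed as follows.
\begin{itemize}
\item Harnack-type inequality. From $|w_d'|\le C\|w_d\|_{C_T}$ and $w_d'/w_d\ge -C-\frac{C''}{d}\|w_d\|_{C_T}/w_d$, one first shows $\max w_d\le K\min w_d$ with $K$ independent of $d$.
\item Lower bound. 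Then $w_d'/w_d\ge\tilde\theta-\tilde\gamma-\tilde b\,w_d-\frac{C''K}{d}$. Integrating over a period and using periodicity gives
\[
0\ge\int_0^T(\tilde\theta-\tilde\gamma)\,dt-T\|\tilde b\|\,\|w_d\|_{C_T}-\frac{C''KT}{d},
\]
so $\|w_d\|_{C_T}$, and hence $\min w_d$, is bounded below by a positive constant for $d$ large.
\end{itemize}
If the Harnack step proves awkward, a fallback is to compare $I_d^*$ from below with $\varepsilon\phi_d$. Here $\phi_d$ is a positive periodic eigenfunction of $\mathcal B+\mathcal F$; it becomes nearly spatially constant as $d\to\infty$, by the same Step 1 estimate applied to the linear problem. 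The comparison would use \citep[Proposition 3.3]{Lin2023}.

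\textbf{Step 3: identification of the limit.} The family $\{w_d\}$ is bounded and equicontinuous, since $w_d'$ is bounded. By Arzel\`a--Ascoli, every sequence $d_n\to\infty$ has a subsequence with $w_{d_n}\to w$ in $C([0,T])$. Since $E_{d_n}\to0$ uniformly, $w$ is a positive $T$-periodic solution of $\zeta'=\tilde H(t,\zeta)$. This Bernoulli equation has a unique positive periodic solution when $\int_0^T(\tilde\theta-\tilde\gamma)\,dt>0$, by the computation in Proposition \ref{proposition4.1}; the hypothesis is exactly this condition. Therefore $w=\zeta_3$, and the whole family converges. Together with Step 1 this gives $\|I_d^*-\zeta_3\|_{C_T}\to0$, and the claim for $S_d^*$ follows from $S_d^*=\frac{N}{|\Omega|}-I_d^*$.
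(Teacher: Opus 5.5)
Your proposal is correct. It shares the paper's skeleton: split $I_d^*$ into its spatial mean and a mean-zero fluctuation, kill the fluctuation with the coercivity from Lemma \ref{lemma2.1} plus a variation-of-constants sup bound, and identify the limit of the mean by Arzel\`a--Ascoli and uniqueness for the Bernoulli equation.

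The genuine difference is how you exclude the trivial limit $\tilde I\equiv 0$. The paper proves only absolute smallness of the fluctuation, via $l'\le -2d\alpha_0 l+C_4$. It therefore rules out $\tilde I\equiv 0$ by contradiction: it normalizes $I_n=I^*_{d_n}/\max I^*_{d_n}$ and reruns Step 1 for the normalized equation. It then passes to the linear limit $\hat I'=(\tilde\theta-\tilde\gamma)\hat I$ with $\max\hat I=1$, and periodicity forces $\int_0^T(\tilde\theta-\tilde\gamma)\,dt=0$.

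Your Step 1 is instead homogeneous in the size of the solution. Applying the max-point argument to the periodic function $\|v_d(t)\|_{L^2}^2$ gives $\|v_d\|_{C_T}\le \frac{C'}{d}\|w_d\|_{C_T}$, which builds the normalization directly into the estimate. You then get the Harnack bound $\max w_d\le K\min w_d$ by integrating $w_d'\ge -Cw_d-\frac{C''}{d}\max w_d$ forward over one period from the maximum point. Integrating $(\log w_d)'$ over a period then yields an explicit positive lower bound on $\min w_d$ for large $d$, with no contradiction argument.

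Your route buys quantitative information: the fluctuation is $O(1/d)$ relative to the mean, and the prevalence has an explicit lower bound. It also avoids a second pass through Step 1. The paper's compactness-and-contradiction route gets by with the cruder additive estimate.

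A few small points:
\begin{itemize}
\item In the sup estimate, because $v_d$ is periodic, start the variation-of-constants formula at $-\infty$, or absorb the term $e^{-d\alpha_0 t}|v_d(x,0)|$ using periodicity. Do not copy the from-time-zero formula of Theorem \ref{theorem3.1} verbatim.
\item Your fallback via a positive periodic eigenfunction of $\mathcal B+\mathcal F$ is unnecessary, and it would need justification, since nonlocal dispersal operators need not have a principal eigenfunction.
\item Existence of $I_d^*$ holds for every $d>0$ by Corollary \ref{corollary2.1}(ii), not only for large $d$.
\end{itemize}
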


\begin{proof}
Since $\int_{\Omega}\int_0^T \theta(x,t)dtdx>\int_{\Omega}\int_0^T \gamma(x,t)dtdx$, Corollary \ref{corollary2.1} (ii) implies that  $R_0(d, N, m)>1$, which indicates that for each $d$, (\ref{eq3.3}) admits a unique positive solution $(S_{d}^*, I_{d}^*)$. In addition, similar to the proof of Proposition \ref{proposition4.1}, we obtain that $\frac{{\rm d}\zeta}{{\rm d}t}$=$\tilde{H}(t, \zeta)$ admits the unique positive $T$-periodic solution \( \zeta_{3}(t) \) since $\frac{1}{T}\int_0^T(\tilde{\theta}(t)-\tilde{\gamma}(t))dt>0$.

It is easy to observe that there exists a large enough positive constant $M$ independent of $d$, such that $M$ is a super-solution of (\ref{eq4.1}), that is to say,
\[
0 = \frac{\partial M}{\partial t} \geq d \mathcal{J} [M] + \frac{\beta(x, t)\frac{N}{|\Omega|}}{m(x,t)+\frac{N}{|\Omega|}} M - \gamma(x, t) M - \frac{\beta(x, t)}{m(x,t)+\frac{N}{|\Omega|}} M^2.
\]
By virtue of \citep[Proposition 3.3]{Lin2023}, we get
\begin{equation}
	\label{eq4.10}
 I_d^* (x, t) \leq M, \quad \forall (x, t) \in \bar{\Omega} \times \mathbb{R}, \, d > 0.
\end{equation}
Taking the average of equation (\ref{eq4.1}) over \(\Omega\), then
\begin{equation}
	\label{eq4.11}
\frac{1}{|\Omega|}\int_{\Omega}\frac{\partial I_d^*}{\partial t}dx = \frac{d}{|\Omega|}\int_{\Omega}\mathcal{J}[I_d^*]dx+\frac{1}{|\Omega|}\int_{\Omega}a(x,t)I_d^*dx-\frac{1}{|\Omega|}\int_{\Omega}b(x,t)(I_d^*)^{2}dx.
\end{equation}
Denote
\[
 \tilde{I}_d^*(t):=\frac{1}{|\Omega|}\int_{\Omega}I_d^*(x,t)dx,~~ \text{and}~~ \hat{I}_d^*(x,t):=I_d^*(x,t)-\tilde{I}_d^*(t).
\]
Equation (\ref{eq4.11}) can be rewritten as
\begin{equation}
	\label{eq4.12}
\frac{{\rm d} \tilde{I}_d^*}{{\rm d}t} = \tilde{H}(t, \tilde{I}_d^*) + \frac{1}{|\Omega|} \int_{\Omega} \left( H(x, t, I_d^*) - \tilde{H}(t, \tilde{I}_d^*) \right) dx.
\end{equation}
By further calculation, we have
\begin{equation}
	\label{eq4.13}
\frac{\partial \hat{I}_d^*}{\partial t} = d \mathcal{J} [\hat{I}_d^*] + H(x, t, I_d^*) - \frac{1}{|\Omega|} \int_{\Omega} H(x, t, I_d^*) dx.
\end{equation}
In view of (\ref{eq4.10}), it is easy to verify that there exist two positive constants \(C_2\) and \(C_3\) independent of \(d\), such that
\begin{equation}
	\label{eq4.14}
\left| \frac{{\rm d} \tilde{I}_d^*}{{\rm d}t}(t) \right| \leq C_2, \quad \forall t \in \mathbb{R},
\end{equation}
and
\begin{equation}
	\label{eq4.15}
\left| H(x, t, I_d^*) - \frac{1}{|\Omega|} \int_{\Omega} H(x, t, I_d^*) \, dx \right| \leq C_3, \quad \forall x \in \bar{\Omega}, \; t \in \mathbb{R}, \; d > 0.
\end{equation}
Note that $\left|\tilde{I}_d^*(t)\right|=\left|\frac{1}{|\Omega|}\int_{\Omega}I_d^*(x,t)dx\right|\leq\frac{|\Omega|}{|\Omega|}M=M$. Combined with (\ref{eq4.14}), it is direct from Ascoli-Arzela theorem that for any sequence \( d_n \to +\infty \) (as \( n \to +\infty \)), there exists a subsequence, still denoted by \( d_n \), and \( \tilde{I} \in C(\mathbb{R}) \), such that
\begin{equation}
	\label{eq4.16}
\tilde{I}_{d_n}^*(t) \to \tilde{I}(t) \text{ uniformly on } \mathbb{R} \text{ as } n \to +\infty.
\end{equation}
The remainder of the proof is divided into two steps.

\textbf{Step 1.} $\hat{I}_d^* (x, t) \to 0 \text{ uniformly on } \bar{\Omega} \times \mathbb{R} \text{ as } d \to +\infty.$

Set \( l(t) := \int_{\Omega} \left( \hat{I}_d^*(x, t) \right)^2 dx \). Clearly, $l(t)$ is $T$-periodic. Direct calculation implies that
\begin{align*}
	\frac{dl(t)}{dt}
	&= 2 \int_{\Omega} \hat{I}_d^* (x, t) \frac{\partial \hat{I}_d^* (x, t)}{\partial t} dx \\
	&= 2 \int_{\Omega} \hat{I}_d^* (x, t) \left(d \mathcal{J} [\hat{I}_d^*] + H(x, t, I_d^*) - \frac{1}{|\Omega|} \int_{\Omega} H(x, t, I_d^*) dx \right) dx \\
	&\leq 2d \int_{\Omega} \hat{I}_d^* (x, t) \left[ \int_{\Omega} J(x - y, t) (\hat{I}_d^* (y, t) - \hat{I}_d^* (x, t)) dy \right] dx + 2C_3 \int_{\Omega} \left| \hat{I}_d^* (x, t) \right| dx \\
	&= -d \int_{\Omega} \int_{\Omega} J(x - y, t) \left[ \hat{I}_d^* (y, t) - \hat{I}_d^* (x, t) \right]^2 dy dx + 2C_3 \int_{\Omega} \left| \hat{I}_d^* (x, t) \right| dx \\
	&\leq -2d \alpha_0 l(t) + C_4,
\end{align*}
for some positive constant \(C_4\) independent of \(d, t\) and \(x\), where \(\alpha_0\) is given in Lemma \ref{lemma2.1}. According to the constant-variation formula and the comparison principle, we have
\begin{equation}
	\label{eq4.17}
	l(t) \leq e^{-2d \alpha_0 t} l(0) + \int_0^t e^{-2d \alpha_0 (t-s)} C_4 ds, \quad \forall t \geq 0.
\end{equation}
If $t=0$, then $\lim_{d \to +\infty} \int_0^t e^{-2d \alpha_0 (t-s)} C_4 ds=0$. If $t>0$, then
\[
\lim_{d \to +\infty} \int_0^t e^{-2d \alpha_0 (t-s)} C_4 ds=\lim_{d \to +\infty}\frac{C_4-C_4e^{-2d \alpha_0 t}}{2d \alpha_0}=\lim_{d \to +\infty}\frac{2 \alpha_0 tC_4e^{-2d \alpha_0 t}}{2\alpha_0}=0.
\]
Thus, we have
\begin{equation}
	\label{eq4.18}
 l(t)  \to 0 ~\text{ as } ~d \to +\infty.
\end{equation}
Furthermore, it follows that for any $\varepsilon>0$, there exists a $d_{0}$ such that
\begin{align*}
	l(t) \leq \varepsilon^2, \quad \forall d \geq d_0, \; t \in \mathbb{R}.
\end{align*}
By (\ref{eq4.13}) and the constant-variation formula, we derive that
\begin{align*}
	\left| \hat{I}_d^*(x, t) \right|
	=&	\left|e^{-d\eta(x,t,0)}\hat{I}_d^*(x, 0)+\int_0^te^{-d\eta(x,t,s)}\left[d\int_{\Omega}J(x - y, s)\hat{I}_d^*(y, s)dy\right.\right.  \\
	&\left.\left.+H(x, s, I_d^*) - \frac{1}{|\Omega|} \int_{\Omega} H(x, s, I_d^*) dx\right]ds\right|\\
	\leq& e^{-d\eta(x, t, 0)} \left| \hat{I}_d^*(x, 0) \right|
	+ d \int_0^t e^{-d\eta(x, t, s)} \int_{\Omega} J(x - y, s) \left| \hat{I}_d^*(y, s) \right| dy \, ds \\
	& + \int_0^t e^{-d\eta(x, t, s)} \left| H(x, s, I_d^*(x, s)) - \frac{1}{|\Omega|} \int_{\Omega} H(x, s, I_d^*(x, s)) dx \right| ds \\
	\leq& e^{-d \alpha_0 t} \left| \hat{I}_d^*(x, 0) \right|
	+ d \int_0^t e^{-d \alpha_0 (t-s)} \int_{\Omega} J(x - y, s) \left| \hat{I}_d^*(y, s) \right| dy \, ds+C_3 \int_0^t e^{-d \alpha_0 (t-s)}\, ds\\
	\leq& e^{-d \alpha_0 t} \left| \hat{I}_d^*(x, 0) \right|
	+ C_5 d \int_0^t e^{-d \alpha_0 (t-s)} l^{\frac{1}{2}}(s) \, ds
	+ C_3 \int_0^t e^{-d \alpha_0 (t-s)} \, ds,
\end{align*}
for some positive constant \(C_5\) independent of \(d, t\) and \(x\), where $\eta(x, t, s) = \int_s^t \int_{\Omega} J(x - y, r) \, dy \, dr$, and \(\alpha_0\) is given in Lemma \ref{lemma2.1} satisfying
\begin{equation*}
	\int_{\Omega} J(x - y, t) \, dy \geq \alpha_0, \quad \forall x \in \bar{\Omega}, \; t \in \mathbb{R}.
\end{equation*}
Recall that $l(t) \leq \varepsilon^2$, then a simple calculation gives
\[
d \int_{0}^{t} e^{-d \alpha_0 (t-s)} l^{\frac{1}{2}} (s) \, ds \leq d \int_{0}^{t} e^{-d \alpha_0 (t-s)} \varepsilon \, ds \leq \frac{\varepsilon(1-e^{-d \alpha_0t})}{\alpha_0}\leq\frac{\varepsilon}{\alpha_0}, \quad \forall t \geq 0,~d\geq d_{0}.
\]
By the arbitrariness of $\varepsilon$, it follows that
\[
d \int_{0}^{t} e^{-d \alpha_0 (t-s)} l^{\frac{1}{2}}(s) \, ds \to 0 \text{ uniformly on } [0, +\infty) \text{ as } d \to +\infty.
\]
Similarly, one can get that
\[
e^{-d \alpha_0 t} \left| \hat{I}_d^* (x, 0) \right| \to 0 \text{ uniformly on } \bar{\Omega} \times [T, +\infty) \text{ as } d \to +\infty,
\]
and
\[
\int_0^t e^{-d \alpha_0 (t-s)} \, ds \to 0 \text{ uniformly on } [0, +\infty) \text{ as } d \to +\infty.
\]
Hence,
\[
\hat{I}_d^* (x, t) \to 0 \text{ uniformly on } \bar{\Omega} \times [T, +\infty) \text{ as } d \to +\infty.
\]
Combining with the \(T\)-periodicity of \(\hat{I}_d^* (x, t)\), we obtain
\[
\hat{I}_d^* (x, t) \to 0 \text{ uniformly on } \bar{\Omega} \times \mathbb{R} \text{ as } d \to +\infty.
\]

\textbf{Step 2.} $\tilde{I}_d^* (t) \to \zeta_{3}(t) \text{ uniformly on } \mathbb{R} \text{ as } d \to +\infty.$

Note that
\begin{align*}
	&\left| \frac{1}{|\Omega|} \int_{\Omega} \left( H(x, t, I_d^*) - \tilde{H}(t, \tilde{I}_d^*) \right) dx \right|\\
	=& \left| \frac{1}{|\Omega|} \int_{\Omega} \left\{ (\theta(x, t) - \gamma(x, t)) I_d^*(x, t)-\left(\tilde{\theta}(t) - \tilde{\gamma}(t)\right)\tilde{I}_d^*(t)- b(x,t)(I_d^*(x, t))^2+\tilde{b}(t)\left( \tilde{I}_d^*(t) \right)^2  \right\} dx \right| \\
	\leq& C_6 l^{\frac{1}{2}}(t),
\end{align*}
for some positive constant \( C_6 \) independent of \( d, x \) and \( t \). Together with (\ref{eq4.18}), we have
\begin{equation}
	\label{eq4.19}
	\int_{0}^{T} \left| \frac{1}{|\Omega|} \int_{\Omega} \left( H(x, t, I_d^*) - \tilde{H}(t, \tilde{I}_d^*) \right) dx \right| dt \to 0~ \text{ as }~ d \to +\infty.
\end{equation}
Let \( d_n \) be any sequence with \( d_n \to +\infty \) as \( n \to +\infty \). Integrating (\ref{eq4.12}) from 0 to \( t \) and replacing \( d \) by \( d_n \), it follows that
\[
\tilde{I}_{d_{n}}^*(t) - \tilde{I}_{d_{n}}^*(0) = \int_0^t \tilde{H}(s, \tilde{I}_{d_{n}}^*(s)) \, ds+\int_0^t\frac{1}{|\Omega|} \int_{\Omega} \left( H(x, t, \tilde{I}_{d_{n}}^*) - \tilde{H}(t, \tilde{I}_{d_{n}}^*) \right) dxdt, \quad \forall t \in [0, T].
\]
Let $n\to+\infty$, it is direct from (\ref{eq4.16}) and (\ref{eq4.19}) that
\[
\tilde{I}(t) - \tilde{I}(0) = \int_0^t \tilde{H}(s, \tilde{I}(s)) \, ds, \quad \forall t \in [0, T].
\]
Consequently, $\tilde{I}(t)$ satisfies $\frac{{\rm d}\zeta}{{\rm d}t}$=$\tilde{H}(t, \zeta)$. Since $I_d^*(x,t)>0$. we have $\tilde{I}(t)\geq0$. Recall that $\frac{1}{T}\int_0^T(\tilde{\theta}(t)-\tilde{\gamma}(t))dt>0$, similar to the analysis in Proposition \ref{proposition4.1}, we obtain that the only possible cases are
\[
\tilde{I}(t) \equiv 0 \quad \text{or} \quad \tilde{I}(t) \equiv \zeta_3(t).
\]

We now argue $\tilde{I}(t) \not\equiv 0$ by contradiction. Assume that $\tilde{I}(t) \equiv 0$. Then, by the proof of Step 1 and (\ref{eq4.16}), one can obtain that
\[
I_{d_{n}}^* (x, t) \to 0 \text{ uniformly on } \bar{\Omega} \times \mathbb{R} \text{ as } n \to +\infty.
\]
Define $M_{n}:= \max_{(x,t) \in \bar{\Omega}\times [0,T]} I_{d_{n}}^* (x, t)$. Then we have $M\geq M_{n}>0$ and $M_{n}\to0$ as $n \to +\infty$. Let
\[
I_{n}(x,t):=\frac{I_{d_{n}}^* (x, t)}{M_{n}}.
\]
Then $0<I_{n}\leq1$. In view of (\ref{eq4.1}), $I_{n}(x,t)$ satisfies
\begin{equation}
	\label{eq4.20}
	\frac{\partial I_{n}}{\partial t} = d_{n}\mathcal{J}[I_{n}] +(\theta(x,t)-\gamma(x,t))I_{n}-M_{n}b(x,t)I_{n}^{2}.
\end{equation}
By a similar argument as in Step 1,  we get
\[
I_n (x, t) -\frac{1}{|\Omega|}  \int_{\Omega} I_n (x, t)dx \to 0 ~\text{ uniformly on } \bar \Omega\times \mathbb{R}  \text{ as }~ n \to +\infty.
\]
Moreover, from the derivation of (\ref{eq4.16}), there exists a subsequence, still denoted by $d_{n}$, and $\hat{I}(t)\in C(\mathbb{R})$ such that
\begin{equation*}
	\tilde{I}_n (t):=\frac{1}{|\Omega|}  \int_{\Omega} I_n (x, t)dx \to \hat{I}(t) \text{ uniformly on } \mathbb{R} \text{ as } n \to +\infty.
\end{equation*}
Let $\hat I$ attain its maximum at $t=t_{*}$, i.e., $\hat{I}(t_{*})=1$. Intergrating both sides of (\ref{eq4.20}) over $\Omega$ and letting $n \to +\infty$, it follows that
\begin{equation*}
	\frac{\partial \hat{I}(t)}{\partial t} =(\tilde{\theta}(t)-\tilde{\gamma}(t)) \hat{I}(t).
\end{equation*}
Solving the above equation gives
\begin{equation}
	\label{eq4.21}
\hat{I}(t) = \hat{I}(t_*) e^{ \int_{t_*}^t \left( \tilde{\theta}(s) - \tilde{\gamma}(s) \right) ds }, \quad t \geq t_*.
\end{equation}
Since $I_{n}(x,t)$ is $T$-periodic, $\hat{I}(t)$ is also $T$-periodic. Therefore, $\hat{I}(t_*)=\hat{I}(t_*+T)$. Substituting $t=t_*+T$ into (\ref{eq4.21}) yields
\[
 \int_0^T \left( \tilde{\theta}(s) - \tilde{\gamma}(s) \right) ds =0,
\]
which contradicts the assumption that $\int_{\Omega}\int_0^T \theta(x,t)dtdx>\int_{\Omega}\int_0^T \gamma(x,t)dtdx$. Hence, $\tilde{I}(t) \equiv \zeta_3(t)$. By (\ref{eq4.16}) and the arbitrariness of $d_{n}$, we derive
\[
\tilde{I}_d^* (t) \to \zeta_{3}(t) \text{ uniformly on } \mathbb{R} \text{ as } d \to +\infty.
\]
Combined with Step 1, it follows that
\[
\lim_{d \to +\infty} \left\| I_d^* - \zeta_{3}\right\|_{C_T} = 0.
\]
Then
\[
\lim_{d \to +\infty} \left\| S_d^* - \left(\frac{N}{|\Omega|}-\zeta_{3}\right) \right\|_{C_T} = 0.
\]
This completes the proof.
\end{proof}

\begin{remark}
By an argument similar to that in Theorem 	{\rm\ref{theorem4.3}}, the stronger condition $\beta(x,t)>\gamma(x,t)$ for all $(x,t)\in\overline{\Omega}\times\mathbb{R}$ in {\rm\citep[Theorem 4.2]{Lin2023}} can be relaxed to the weaker condition that $\int_{\Omega}\int_0^T \beta(x,t)\,dt\,dx>\int_{\Omega}\int_0^T \gamma(x,t)\,dt\,dx$.
\end{remark}

\section{Numerical simulations }
	\label{section 5}
In this section, some numerical simulations are presented to illustrate the theoretical results. First, we give multiple sets of parameter values and initial data in Tables \ref{tab:common_params}-\ref{tab:initial_conditions}.

\begin{table}[H]
	\centering
	\caption{Common parameter values used in all simulations}
	\begin{tabular}{c|c|c|c}
		\toprule
		$\Omega$ & $T$ & $J(x-y,t)$ & $\sigma(t)$ \\
		\midrule
		$(-1, 1)$ & $12$ & $\displaystyle \frac{1}{\sqrt{\pi }\sigma(t)} \exp\left(-\frac{(x-y)^2}{\sigma^2(t)}\right)$ & $\displaystyle 1 + 0.08 \sin\left(\frac{2\pi t}{12}\right)$ \\
		\bottomrule
	\end{tabular}
	\label{tab:common_params}
\end{table}

\begin{table}[H]
	\centering
	\captionsetup{labelsep=period, font=small}
	\caption{Six sets of parameter values for $\beta$, $\gamma$, and $m$}
	\setlength{\tabcolsep}{2pt}
	\begin{tabular}{c|c|c|c}
		\toprule
		Set & $\beta(x,t)$ & $\gamma(x,t)$ & $m(x,t)$ \\
		\midrule
		1 &
		$10 + \sin(\pi x) + 0.2 \sin(2\pi t/12)$ &
		$2 + \sin(\pi x) + 0.15 \cos(2\pi t/12)$ &
		$6 + \cos(\pi x) + 0.25 \sin(2\pi t/12)$ \\
		2 &
		$8 + \sin(\pi x) + 0.2 \sin(2\pi t/12)$ &
		$3.3 + \sin(\pi x) + 0.15 \cos(2\pi t/12)$ &
		$6 + \cos(\pi x) + 0.25 \sin(2\pi t/12)$ \\
		3 &
		$12 + \sin(\pi x) + 0.2 \sin(2\pi t/12)$ &
		$9 + \sin(\pi x) + 0.15 \cos(2\pi t/12)$ &
		$3 + \cos(\pi x) + 0.25 \sin(2\pi t/12)$ \\
		 4 &
		$3 + \sin(\pi x) + 0.2 \sin(2\pi t/12)$ &
		$7 + \sin(\pi x) + 0.15 \cos(2\pi t/12)$ &
		$6 + \cos(\pi x) + 0.25 \sin(2\pi t/12)$ \\
		 5 &
		$2 + \sin(\pi x) + 0.15 \cos(2\pi t/12)$ &
		$2 + \sin(\pi x) + 0.15 \cos(2\pi t/12)$ &
		$6 + \cos(\pi x) + 0.25 \sin(2\pi t/12)$ \\
		 6 &
		$\begin{array}{@{}c@{}}
			\frac{1}{4}\big[2 + \sin(\pi x) + 0.15 \cos(2\pi t/12)\big] \\
			\times \big[10 + \cos(\pi x) + 0.25 \sin(2\pi t/12)\big]
		\end{array}$ &
		$2 + \sin(\pi x) + 0.15 \cos(2\pi t/12)$ &
		$6 + \cos(\pi x) + 0.25 \sin(2\pi t/12)$ \\
		\bottomrule
	\end{tabular}
	\label{tab:params_six_sets}
\end{table}

\begin{table}[H]
	\centering
	\captionsetup{labelsep=period, font=small}
	\caption{Four sets of initial conditions used in numerical simulations}
	\setlength{\tabcolsep}{4pt}
	\begin{tabular}{c|c|c}
		\toprule
		Initial data & $S_0(x)$ & $I_0(x)$ \\
		\midrule
		1 & $3.9 + \cos(\pi x)$ & $0.1 + 0.1 \cos(\pi x)$ \\
		2 & $2.8 + 0.65 \sin(\pi x)$ & $1.2 + 0.14 \sin(\pi x)$ \\
		3 & $2.0 + 0.7 \cos(\pi x)$ & $2.0 + 0.12 \sin(\pi x)$ \\
		4 & $0.6 + 0.18 \sin(\pi x)$ & $3.4 + 0.3 \cos(\pi x)$ \\
		\bottomrule
	\end{tabular}
	\label{tab:initial_conditions}
\end{table}


To investigate the effect of the diffusion coefficient for infectious individuals, $d_I$, on the basic reproduction number $R_0$, we treat $d_I$ as a variable and fix the total population size $N=8$. We adopt the common parameter values listed in Table \ref{tab:common_params}, while $\beta$, $\gamma$ and $m$ take the values from parameter sets 1, 2 and 3 in Table \ref{tab:params_six_sets}, respectively, to generate Fig. \ref{Fig1}. These three parameter sets correspond to $R_0>1$, $R_0$ crossing 1, and $R_0<1$, which are illustrated in subplots (a), (b) and (c) of Fig. \ref{Fig1}, respectively. In addition, according to Theorem \ref{theorem2.2} (ii), the limiting values of $R_0$ under each parameter set are calculated and labelled in the figure. From Fig. \ref{Fig1}, we observe that $R_0$ decreases as $d_I$ increases for all three groups of parameters. However, since we plot the results using discrete sample points with limited numerical accuracy, we cannot rigorously claim that $R_0$ is monotone in $d_I$ in the absence of a theoretical verification.

\begin{figure}[htpp]
	\centering
	\subfigure[]
	{
		\begin{minipage}{8.16cm}
			\centering
			\includegraphics[width=1\linewidth]{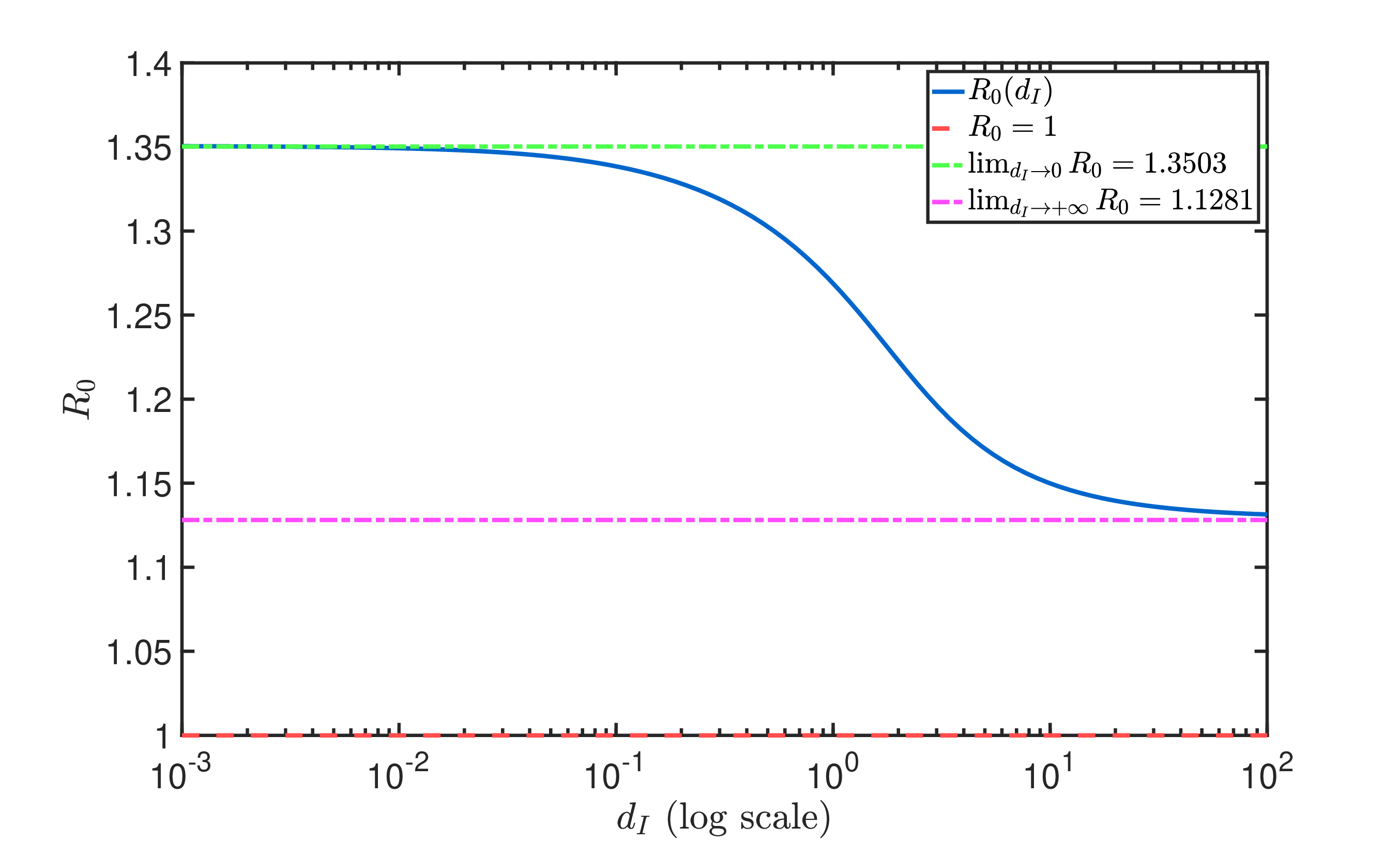}
		\end{minipage}
	}
	\subfigure[]
	{
		\begin{minipage}{8.16cm}
			\centering
			\includegraphics[width=1\linewidth]{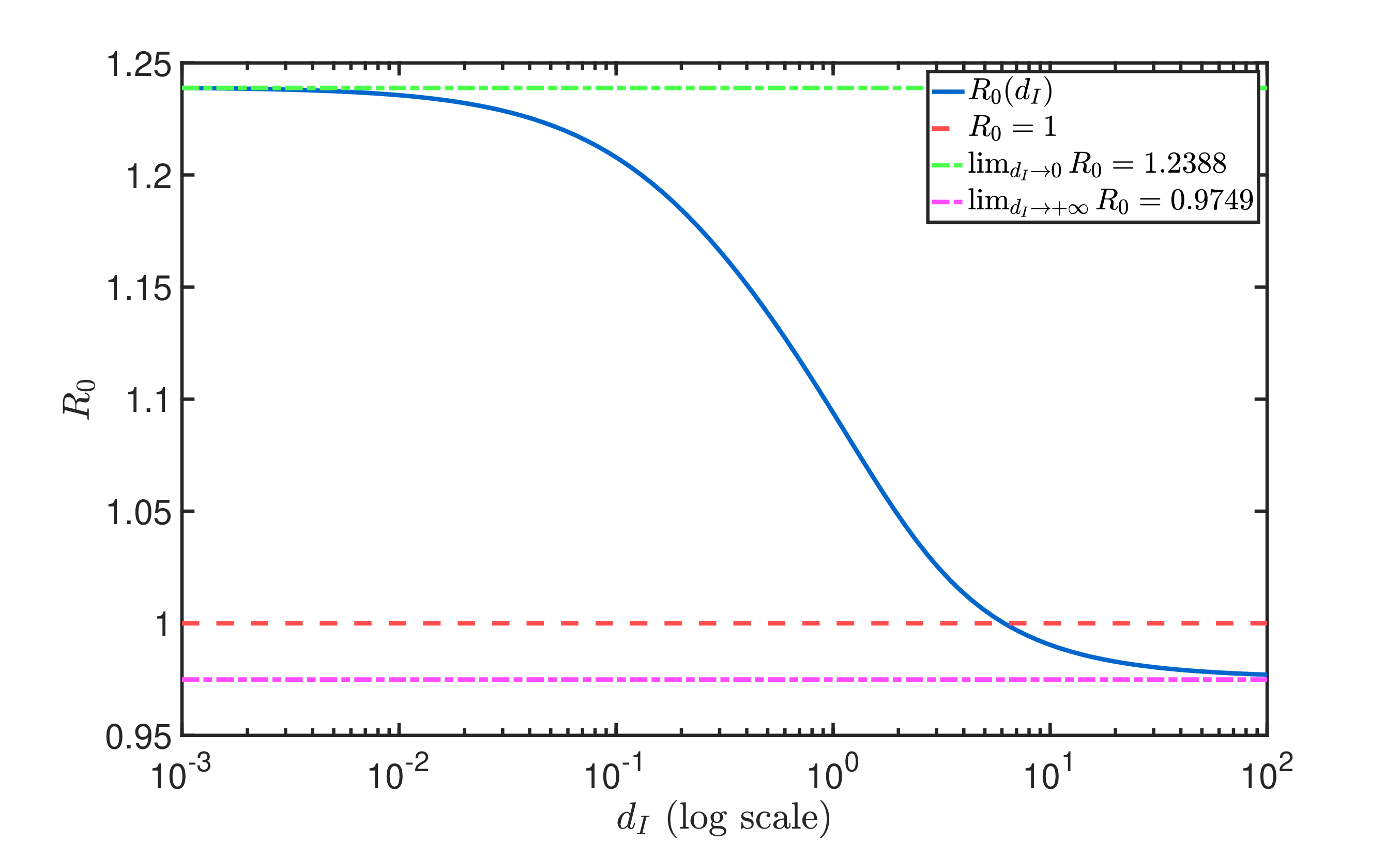}
		\end{minipage}
	}
	\subfigure[]
{
	\begin{minipage}{8.16cm}
		\centering
		\includegraphics[width=1\linewidth]{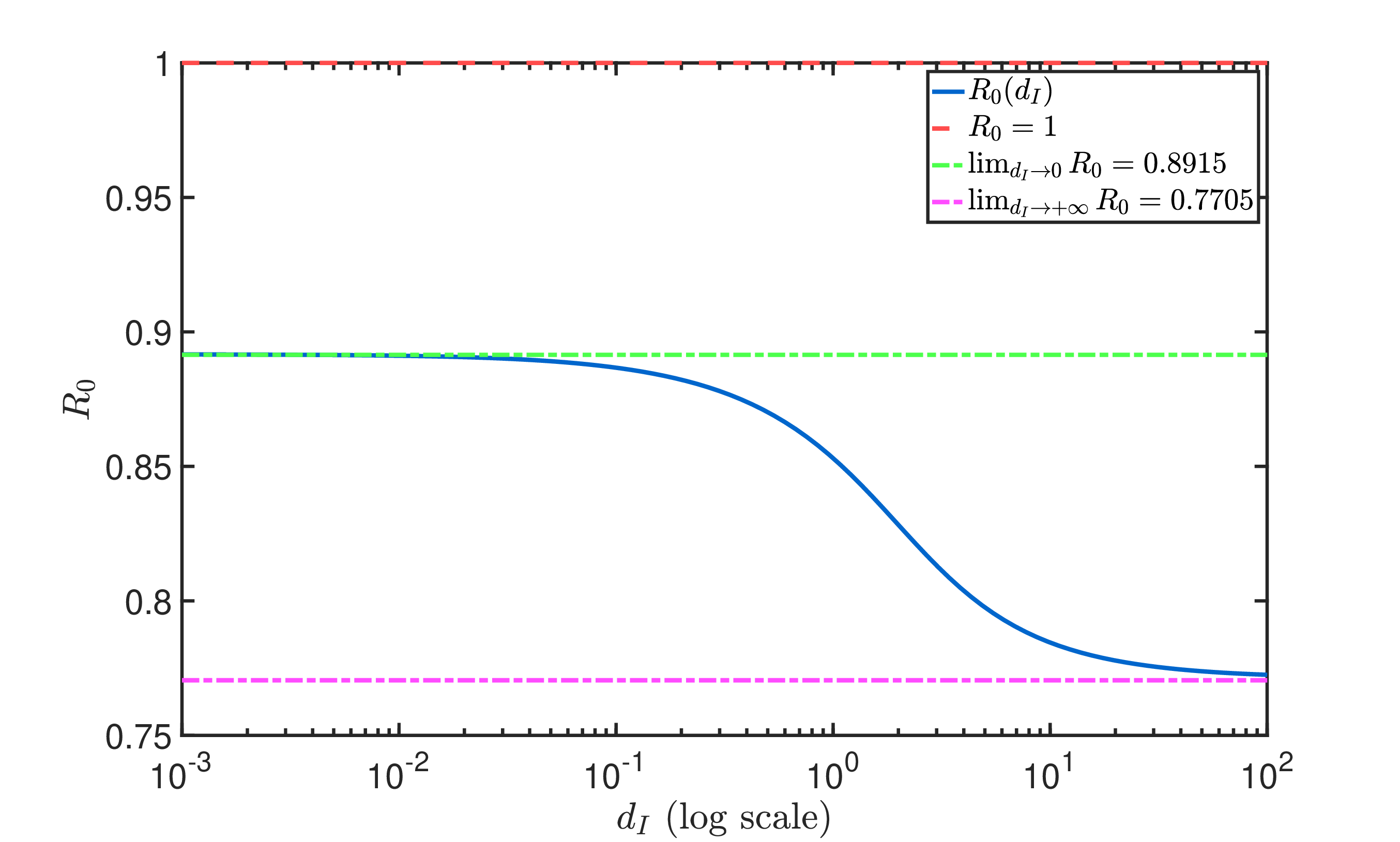}
	\end{minipage}
}
	\renewcommand{\figurename}{\footnotesize{\textbf{Fig.}}}
	\caption{\footnotesize{The dependence of $R_{0}$ on $d_{I}$ under different parameter sets. (a) $R_{0}>1$, (b) $R_{0}$ crosses the threshold 1, (c) $R_{0}<1$.}}
	\label{Fig1}
\end{figure}

Next, we focus on exploring the influence of the total population size $N$ on the basic reproduction number $R_0$. We set $d_I=1$, adopt the common parameters in Table \ref{tab:common_params}, and take $\beta$, $\gamma$, $m$ from parameter set 1 of Table \ref{tab:params_six_sets}. Treating $N$ as a free parameter and holding other parameters fixed, we plot Fig. \ref{Fig2} (a). We observe that $R_0$ grows with $N$, consistent with Corollary \ref{corollary2.1} (iv) in the theoretical section. Biologically, a larger total population size facilitates disease outbreaks. Therefore, limiting the total population size is beneficial for disease control. Furthermore, for this parameter set, a direct computation gives $\lim_{N\to 0} R_0(d_{I}, N, m)
= 0$, $\lim_{N\to +\infty} R_0(d_{I}, N, m)
= \hat{R}_0=6.5083$ in accordance with Theorem \ref{theorem2.2} (iii). As shown in Fig. \ref{Fig2} (a), the $R_0$-versus-$N$ curve exhibits two distinct asymptotic regimes, approaching different limiting values for small and large $N$, respectively.

According to Corollary \ref{corollary2.1} (v), $R_0$ is monotone decreasing with respect to $m(x,t)$. To visualize this monotonicity, we introduce a constant scaling coefficient $m_0$ and define $M_{m_0} = m_0 \cdot m(x,t)$. Treating $m_0$ as a variable, fixing $N = 8$, and keeping all other parameters the same as those used in Fig.~\ref{Fig2} (a), we replace $m$ with $M_{m_0}$ in the definition of $R_0$ and obtain Fig.~\ref{Fig2} (b). The figure shows that $R_0$ decreases as $m_0$ increases. From a biological perspective, this implies that enhancing the saturation effect through a series of public health interventions, such as contact restrictions and isolation policies, facilitates disease elimination. Furthermore, the $R_0$-versus-$m_0$ curve approaches the limiting values $\hat{R}_0=6.5083$ and $0$ at the two ends, respectively, which are computed according to Theorem~\ref{theorem2.2} (iv).

\begin{figure}[htpp]
	\centering
	\subfigure[]
	{
		\begin{minipage}{8cm}
			\centering
			\includegraphics[width=1\linewidth]{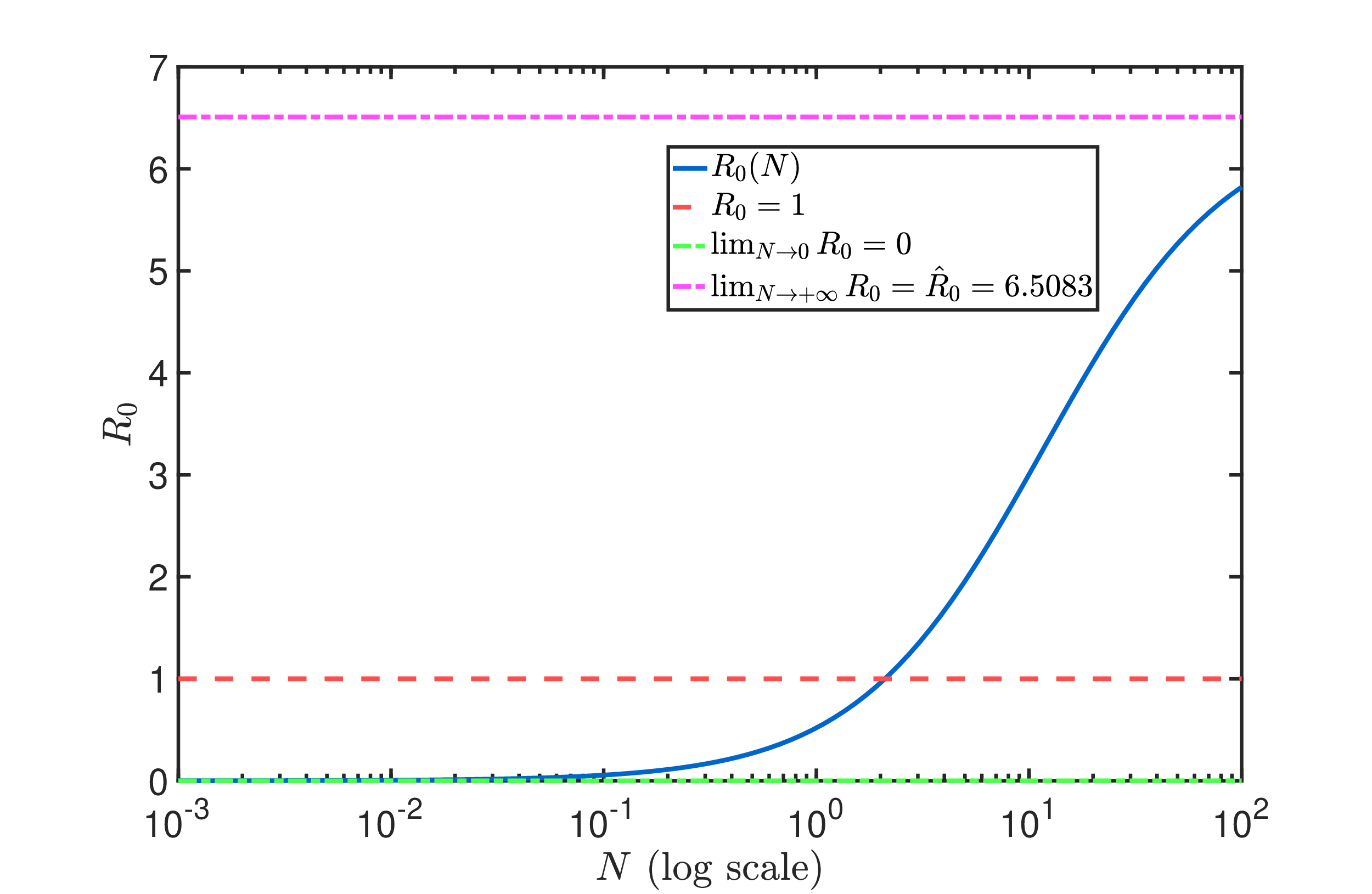}
		\end{minipage}
	}
	\subfigure[]
	{
		\begin{minipage}{8cm}
			\centering
			\includegraphics[width=1\linewidth]{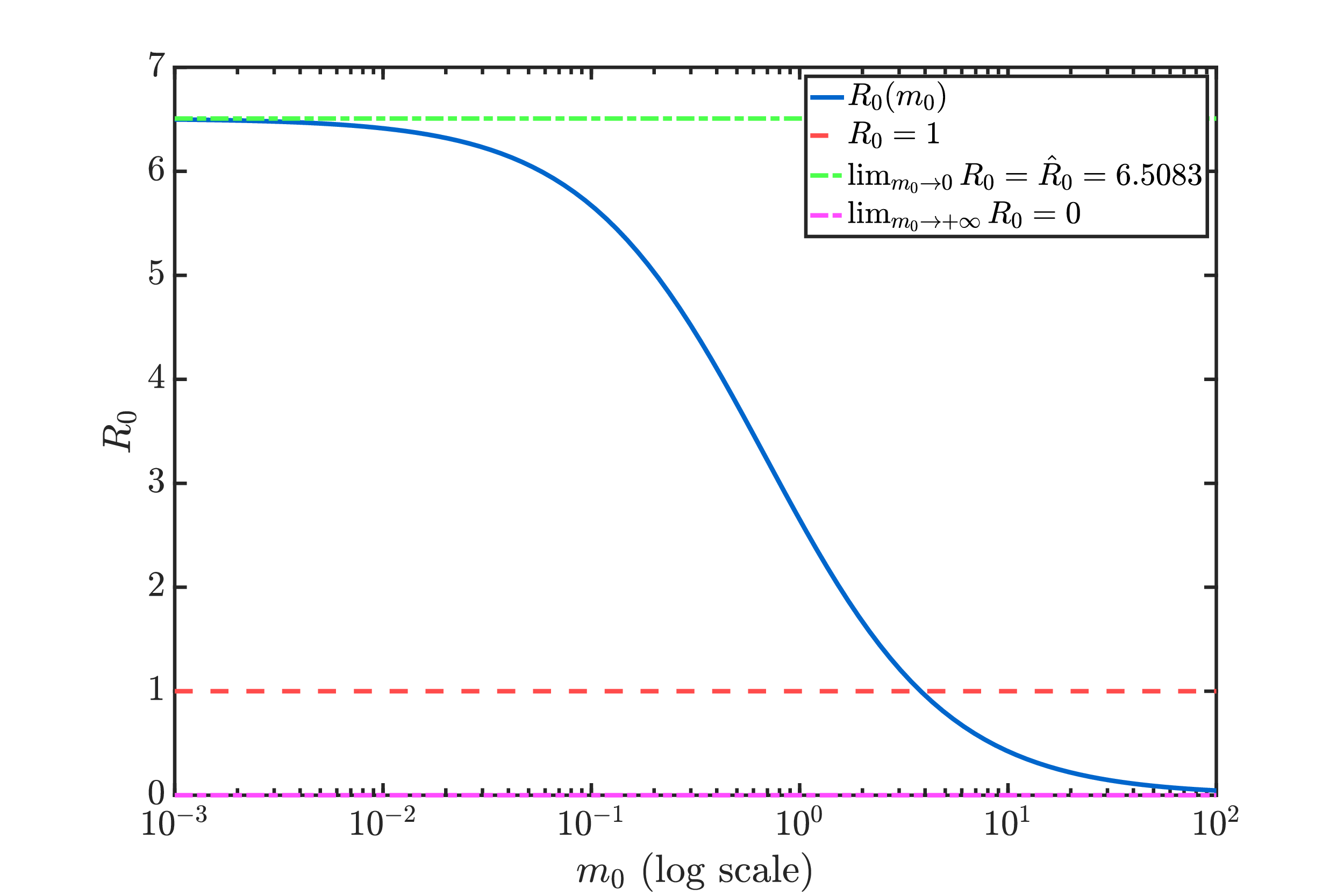}
		\end{minipage}
	}
	\renewcommand{\figurename}{\footnotesize{\textbf{Fig.}}}
	\caption{\footnotesize{The dependence of $R_{0}$ on $N$ and $m_{0}$. (a) $R_{0}(N)$, (b) $R_{0}(m_{0})$.}}
	\label{Fig2}
\end{figure}

For system (\ref{model1.2}), we set $d_{S}=0.8$, $d_{I}=1$, $N=8$, use the common parameters listed in Table~\ref{tab:common_params}, and choose parameter sets~4 and 5 in Table~\ref{tab:params_six_sets} for $\beta$, $\gamma$, and $m$, which correspond to the cases $\beta < \gamma$ and $\beta = \gamma$ for all $(x, t) \in \bar{\Omega} \times [0, T]$, respectively. According to Proposition~\ref{proposition3.2} and Theorem~\ref{theorem3.1}, there exists a unique disease-free equilibrium $(4, 0)$, and it is globally attractive, as illustrated in Figs.~\ref{Fig3} and \ref{Fig4}. It demonstrates that for three distinct spatially heterogeneous initial conditions (initial data 1, 2, and 4 in Table~\ref{tab:initial_conditions}), the solutions of system (\ref{model1.2}) all converge to the disease-free equilibrium $(4, 0)$ provided $\beta \leq \gamma$ for all $(x, t) \in \bar{\Omega} \times [0, T]$. The theorem indicates that if the transmission rate is no greater than the recovery rate at every point in space and time, the disease cannot persist in the population. Consequently, regardless of the initial infection level, the infected population eventually vanishes as time tends to infinity.

	\begin{figure}[htpp]
	\centering
	\begin{minipage}[t]{1\linewidth}
		\centering
		\includegraphics[height=0.32\linewidth,width=0.99\linewidth]{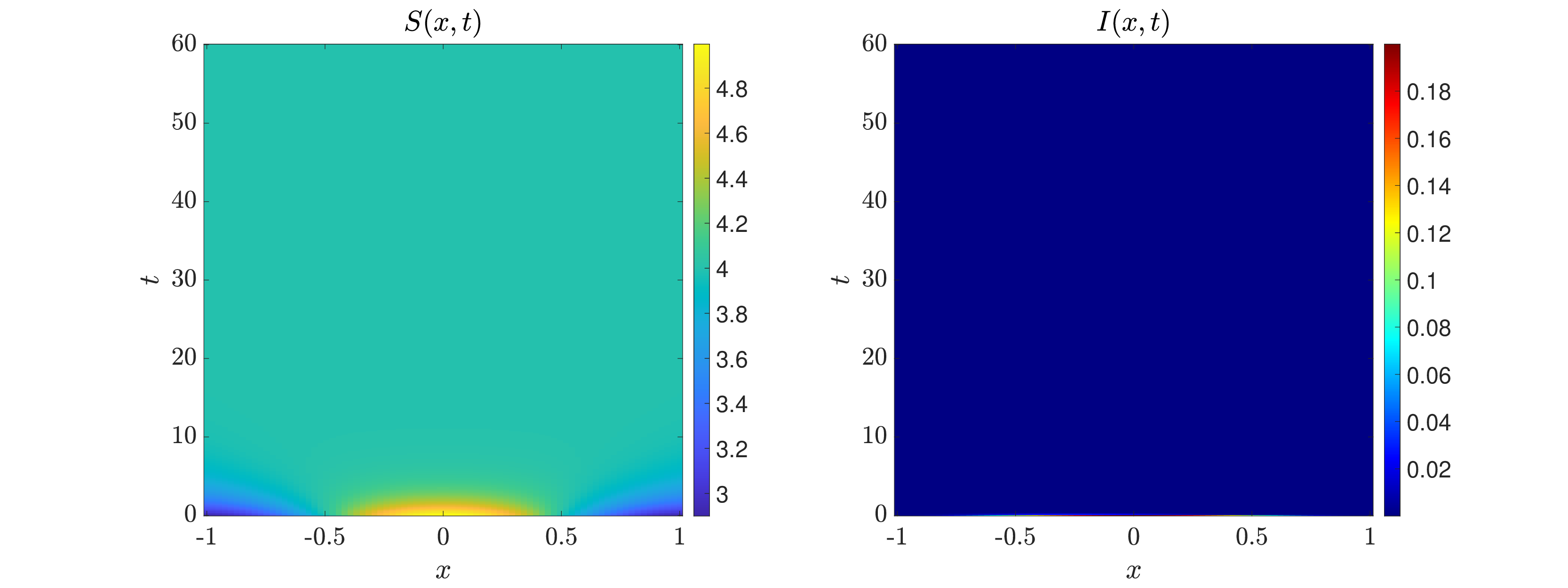}
		\centerline{\scriptsize{(a) $S_{0}(x)=3.9+\cos(\pi x)$, $I_{0}(x)=0.1+0.1\cos(\pi x)$} }
	\end{minipage}
	\begin{minipage}[t]{1\linewidth}
		\centering
		\includegraphics[height=0.32\linewidth,width=0.99\linewidth]{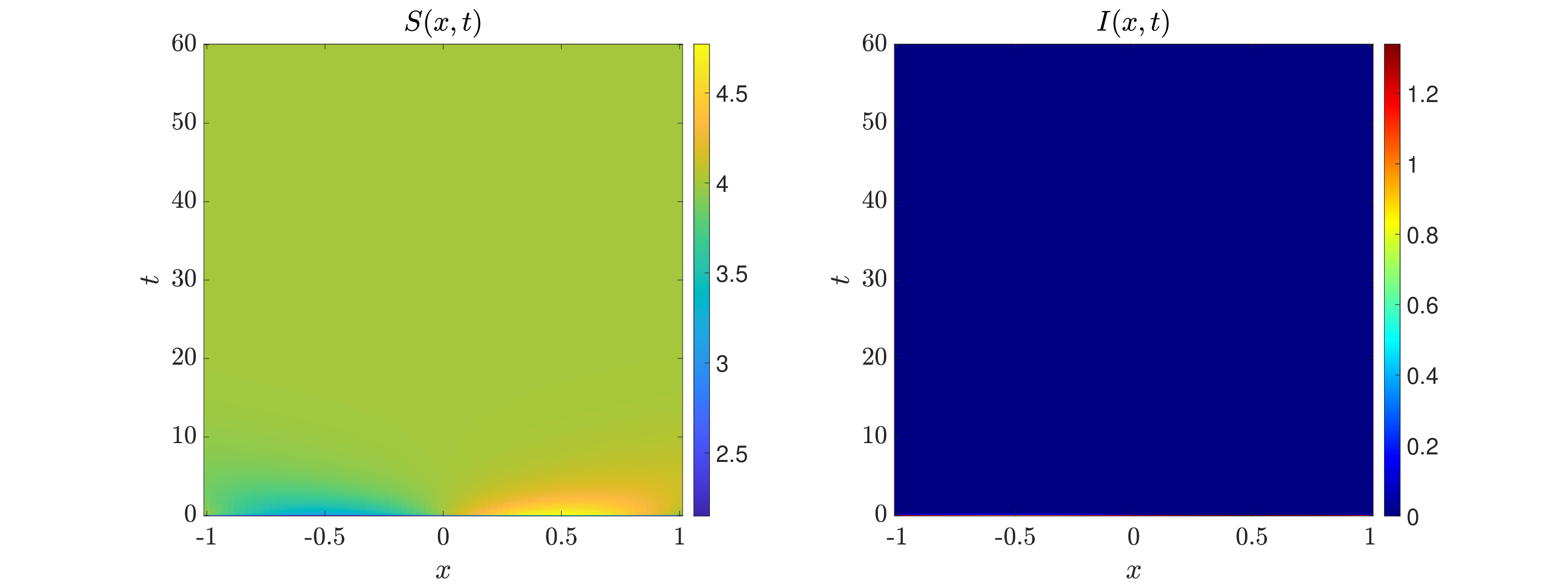}
		\centerline{\scriptsize{(b) $S_{0}(x)=2.8+0.65\sin(\pi x)$, $I_{0}(x)=1.2+0.14\sin(\pi x)$} }
	\end{minipage}
	\begin{minipage}[t]{1\linewidth}
	\centering
	\includegraphics[height=0.32\linewidth,width=0.99\linewidth]{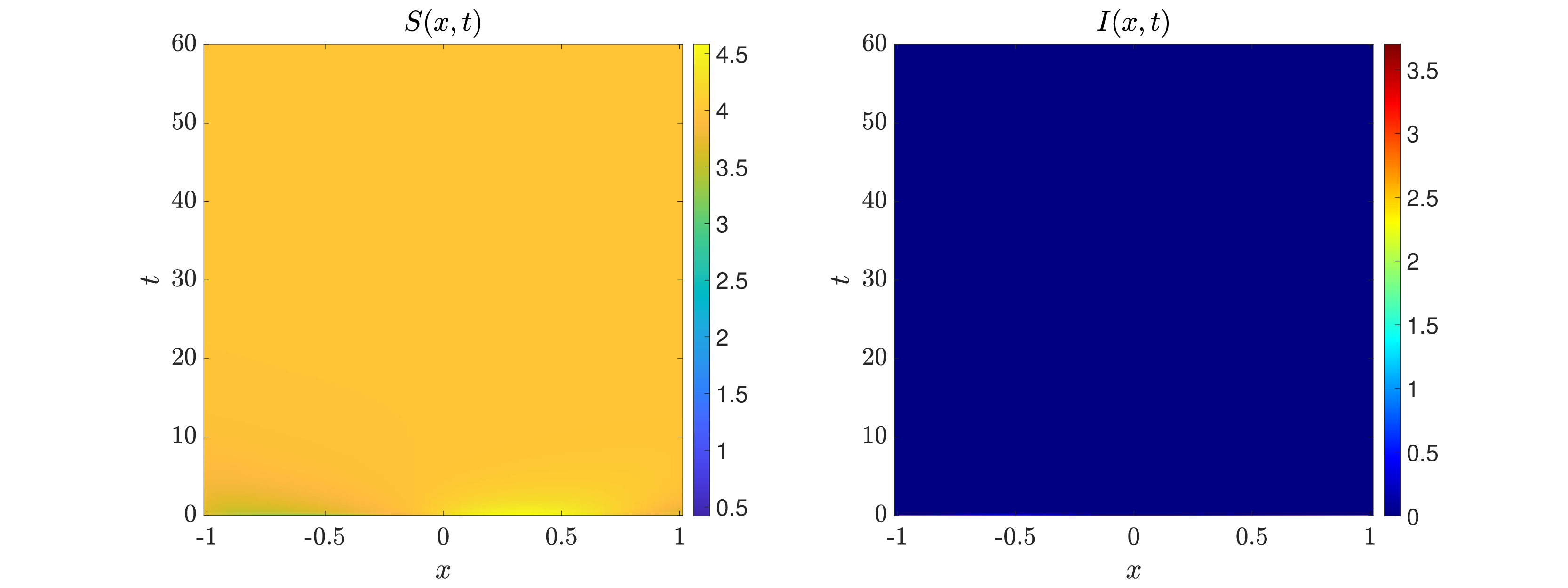}
	\centerline{\scriptsize{(c) $S_{0}(x)=0.6+0.18\sin(\pi x)$, $I_{0}(x)=3.4+0.3\cos(\pi x)$} }
\end{minipage}
	\renewcommand{\figurename}{\footnotesize{\textbf{Fig.}}}
	\caption{\footnotesize{Spatiotemporal profiles of system (\ref{model1.2}) with $\beta(x,t)<\gamma(x,t)$ under three distinct spatially heterogeneous initial conditions. (a) Initial data 1, (b) Initial data 2, (c) Initial data 4.}}
	\label{Fig3}
\end{figure}

\begin{figure}[htpp]
	\centering
	\begin{minipage}[t]{1\linewidth}
		\centering
		\includegraphics[height=0.32\linewidth,width=0.99\linewidth]{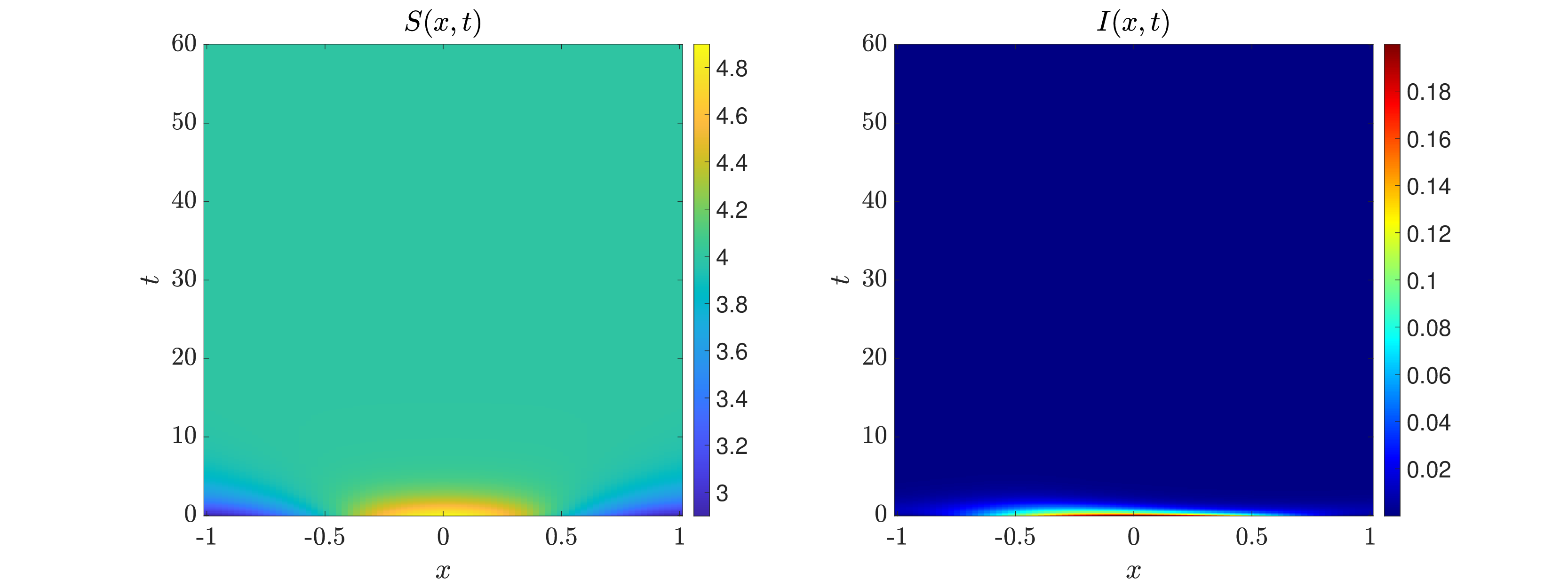}
		\centerline{\scriptsize{(a) $S_{0}(x)=3.9+\cos(\pi x)$, $I_{0}(x)=0.1+0.1\cos(\pi x)$} }
	\end{minipage}
	\begin{minipage}[t]{1\linewidth}
		\centering
		\includegraphics[height=0.32\linewidth,width=0.99\linewidth]{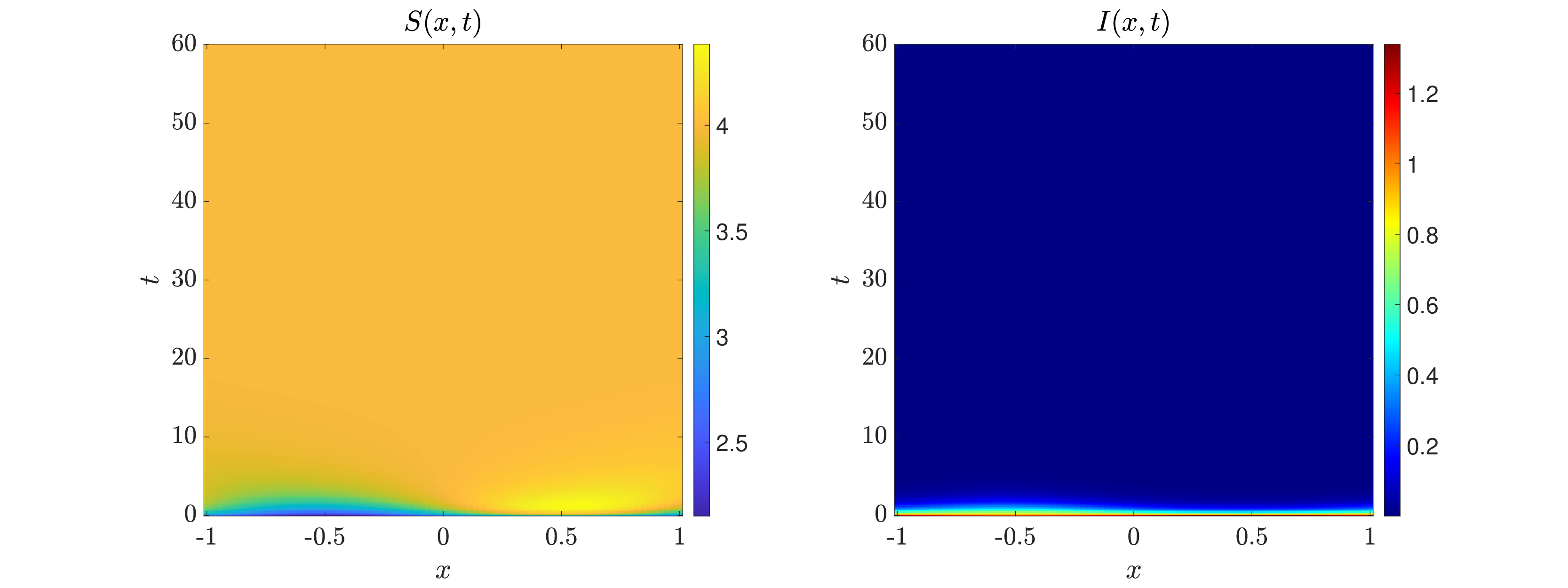}
		\centerline{\scriptsize{(b) $S_{0}(x)=2.8+0.65\sin(\pi x)$, $I_{0}(x)=1.2+0.14\sin(\pi x)$} }
	\end{minipage}
	\begin{minipage}[t]{1\linewidth}
		\centering
		\includegraphics[height=0.32\linewidth,width=0.99\linewidth]{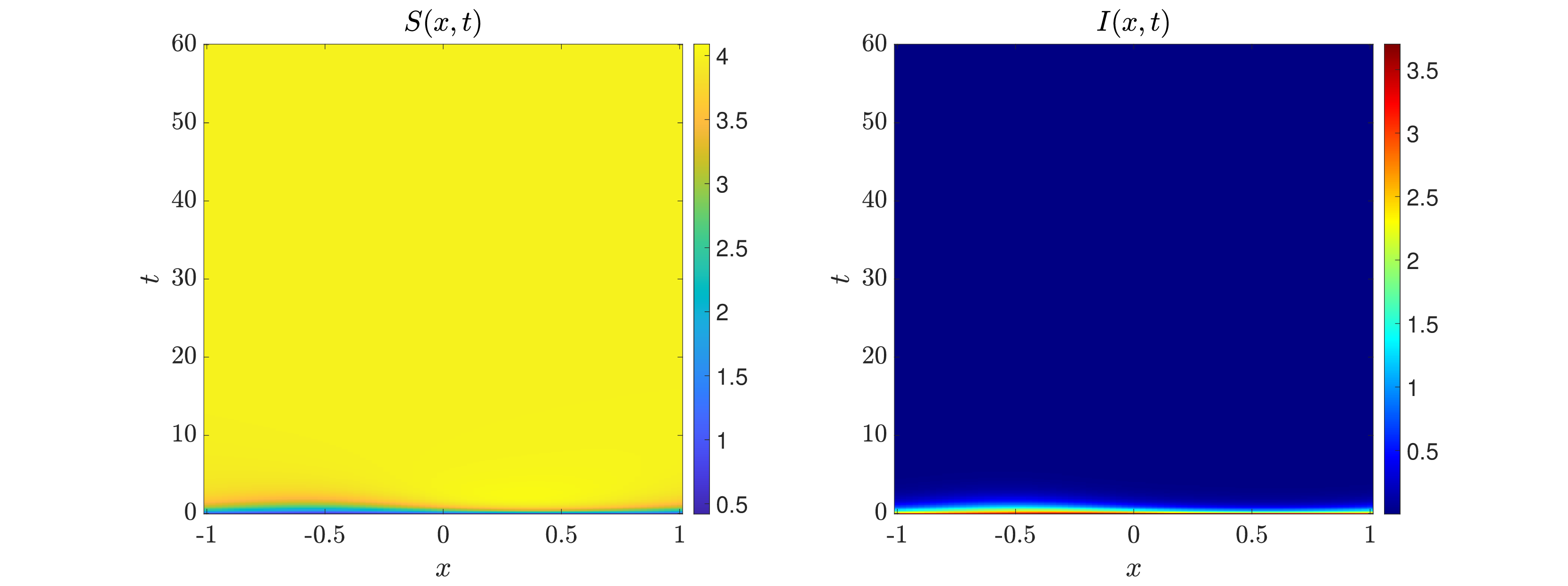}
		\centerline{\scriptsize{(c) $S_{0}(x)=0.6+0.18\sin(\pi x)$, $I_{0}(x)=3.4+0.3\cos(\pi x)$} }
	\end{minipage}
	\renewcommand{\figurename}{\footnotesize{\textbf{Fig.}}}
	\caption{\footnotesize{Spatiotemporal profiles of system (\ref{model1.2}) with $\beta(x,t)=\gamma(x,t)$ under three distinct spatially heterogeneous initial conditions. (a) Initial data 1, (b) Initial data 2, (c) Initial data 4.}}
	\label{Fig4}
\end{figure}

When $d_S=d_I=d$ and $R_0(d, N, m)\leq1$, the disease-free equilibrium \(\left(4, 0\right)\) is globally attractive on the basis of Theorem~\ref{theorem3.2}. To numerically verify this theoretical result, we first set $d_S = d_I = 1$ and select the values of $\beta$, $\gamma$, and $m$ from set~3 in Table~\ref{tab:params_six_sets}, with all other parameters and initial conditions identical to those in Fig.~\ref{Fig3}, which yields Fig.~\ref{Fig5}. A direct verification shows that $R_0 < 1$ under this parameter configuration. As can be seen from the figure, the solutions of system (\ref{model1.2}) converge to the disease-free equilibrium $(4, 0)$ for three distinct sets of initial data. We then choose parameter set~6 in Table~\ref{tab:params_six_sets} for $\beta$, $\gamma$, and $m$, while keeping all other parameters unchanged, giving $R_0 = 1$. To better illustrate the long-time dynamics of system (\ref{model1.2}) for different initial conditions, we fix several representative spatial points $x$ and present the corresponding time-series plots for four different sets of initial data in Fig.~\ref{Fig6}. The results show that, at these fixed spatial locations, the solutions of system (\ref{model1.2}) approach the disease-free equilibrium $(4, 0)$ as $t \to +\infty$.

\begin{figure}[htpp]
	\centering
	\begin{minipage}[t]{1\linewidth}
		\centering
		\includegraphics[height=0.32\linewidth,width=0.99\linewidth]{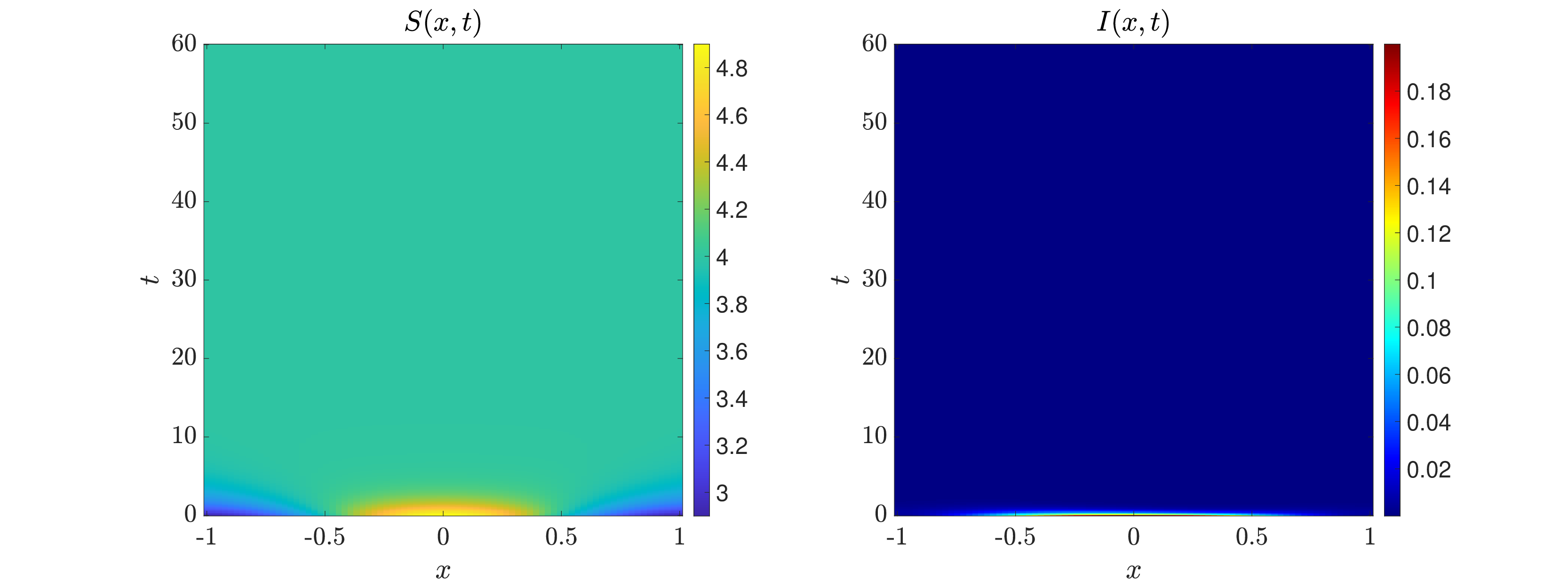}
		\centerline{\scriptsize{(a) $S_{0}(x)=3.9+\cos(\pi x)$, $I_{0}(x)=0.1+0.1\cos(\pi x)$} }
	\end{minipage}
	\begin{minipage}[t]{1\linewidth}
		\centering
		\includegraphics[height=0.32\linewidth,width=0.99\linewidth]{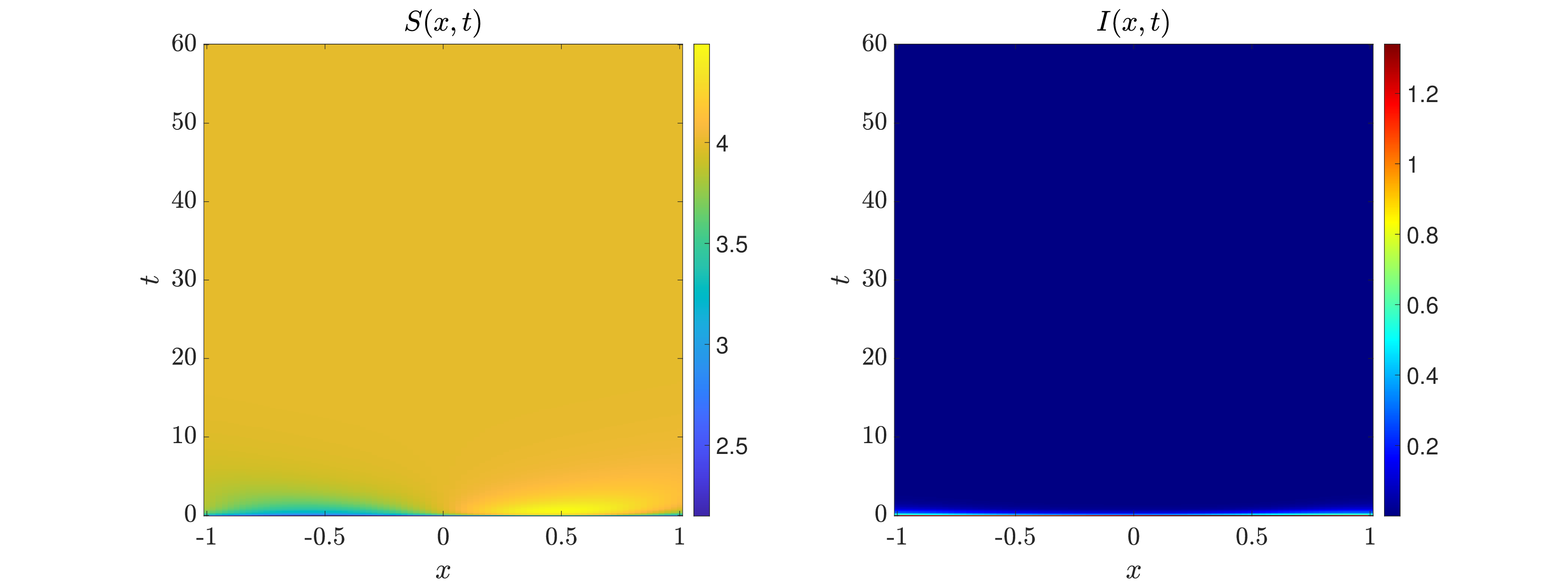}
		\centerline{\scriptsize{(b) $S_{0}(x)=2.8+0.65\sin(\pi x)$, $I_{0}(x)=1.2+0.14\sin(\pi x)$} }
	\end{minipage}
	\begin{minipage}[t]{1\linewidth}
		\centering
		\includegraphics[height=0.32\linewidth,width=0.99\linewidth]{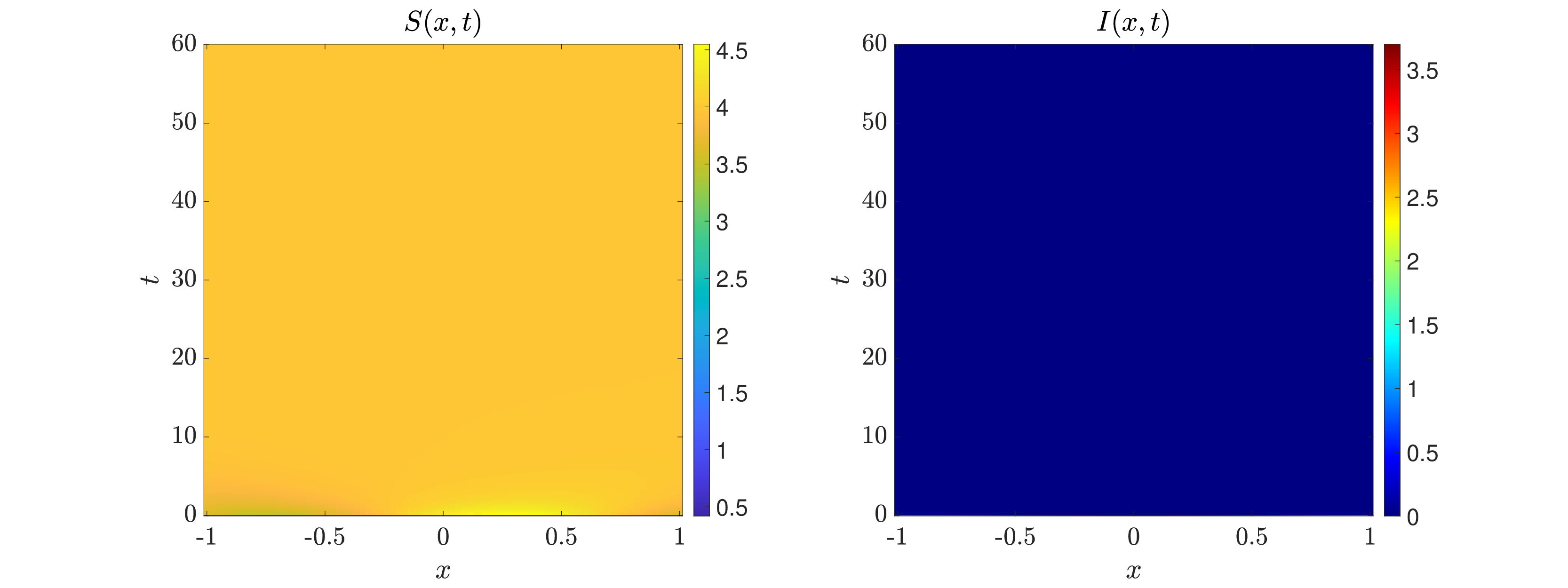}
		\centerline{\scriptsize{(c) $S_{0}(x)=0.6+0.18\sin(\pi x)$, $I_{0}(x)=3.4+0.3\cos(\pi x)$} }
	\end{minipage}
	\renewcommand{\figurename}{\footnotesize{\textbf{Fig.}}}
	\caption{\footnotesize{Spatiotemporal profiles of system (\ref{model1.2}) with $d_{S}=d_{I}=1$ and $R_0(d, N, m)<1$ under three distinct spatially heterogeneous initial conditions. (a) Initial data 1, (b) Initial data 2, (c) Initial data 4.}}
	\label{Fig5}
\end{figure}

\begin{figure}[htpp]
	\centering
	{
		\includegraphics[width=1.03\linewidth]{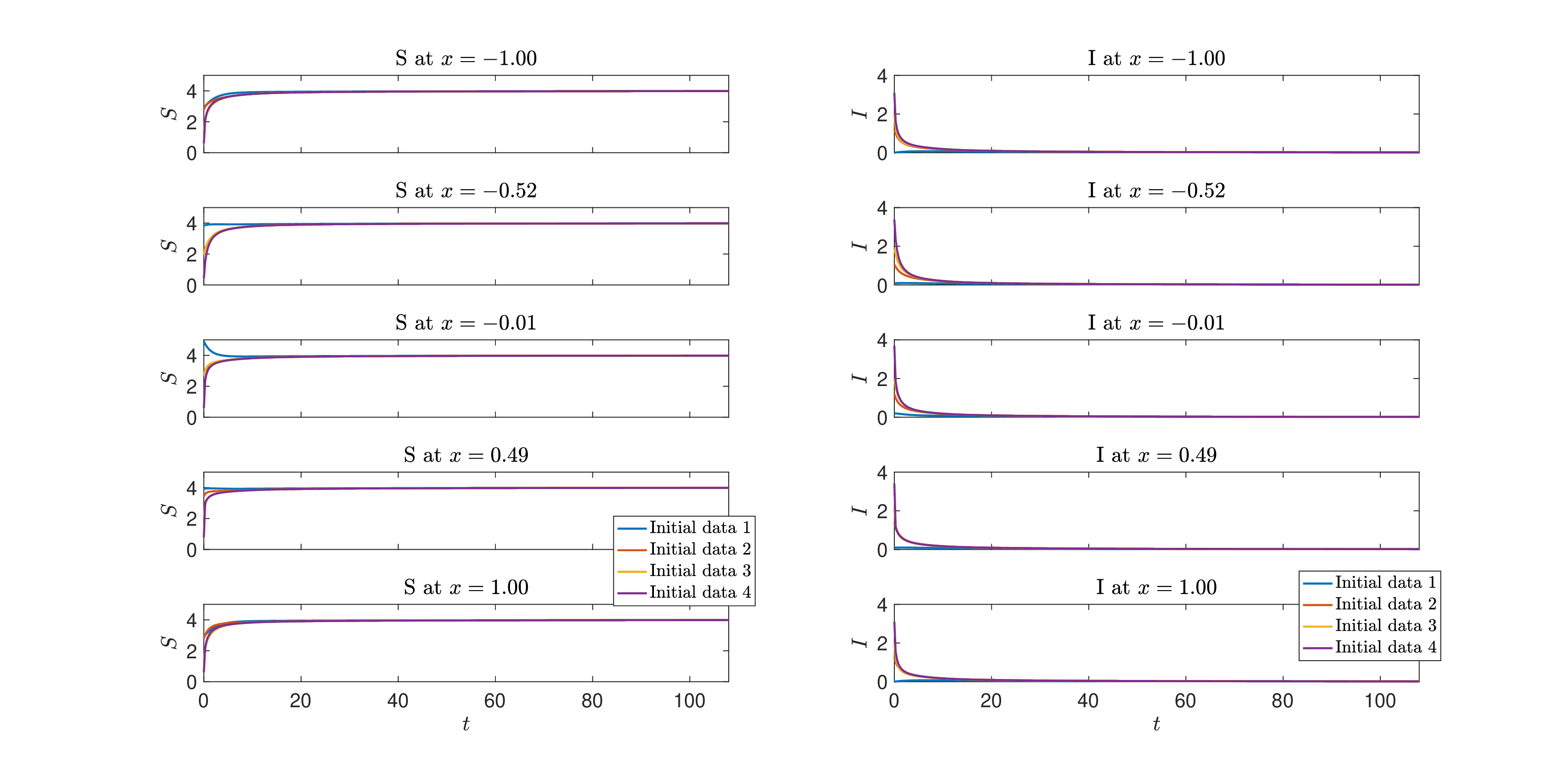}
	}
	\renewcommand{\figurename}{\footnotesize{\textbf{Fig.}}}
	\caption{\footnotesize{Time evolution of $S(x,t)$ and $I(x,t)$  with $d_{S}=d_{I}=1$ and $R_0(d, N, m)=1$ at fixed spatial positions $x=-1.00,-0.52,-0.01,0.49,1.00$ for four different initial conditions.}}
	\label{Fig6}
\end{figure}

With $d_S=d_I=1$, $N=8$ and common parameter values given in Table~\ref{tab:common_params}, we choose parameter set~1 in Table~\ref{tab:params_six_sets} for $\beta$, $\gamma$, and $m$. Direct calculation gives $R_0(d, N, m)>1$. Then there exists the unique endemic equilibrium $(S_{e}^*, I_{e}^*)$ by Theorem~\ref{theorem3.3}. Moreover, it is direct from Theorem~\ref{theorem3.4} that $(S_{e}^*, I_{e}^*)$ is globally attractive. As illustrated in Fig.~\ref{Fig7}, the solutions of system (\ref{model1.2}) corresponding to different initial data all converge to the same endemic equilibrium as $t \to +\infty$. To intuitively demonstrate that the solutions of system (\ref{model1.2}) converge to the same periodic solution starting from distinct initial data, we fix several representative spatial positions $x$, focus on the temporal evolution of $S(x,t)$ and $I(x,t)$, and plot Fig.~\ref{Fig8}.

\begin{figure}[htpp]
	\centering
	\begin{minipage}[t]{1\linewidth}
		\centering
		\includegraphics[height=0.32\linewidth,width=0.99\linewidth]{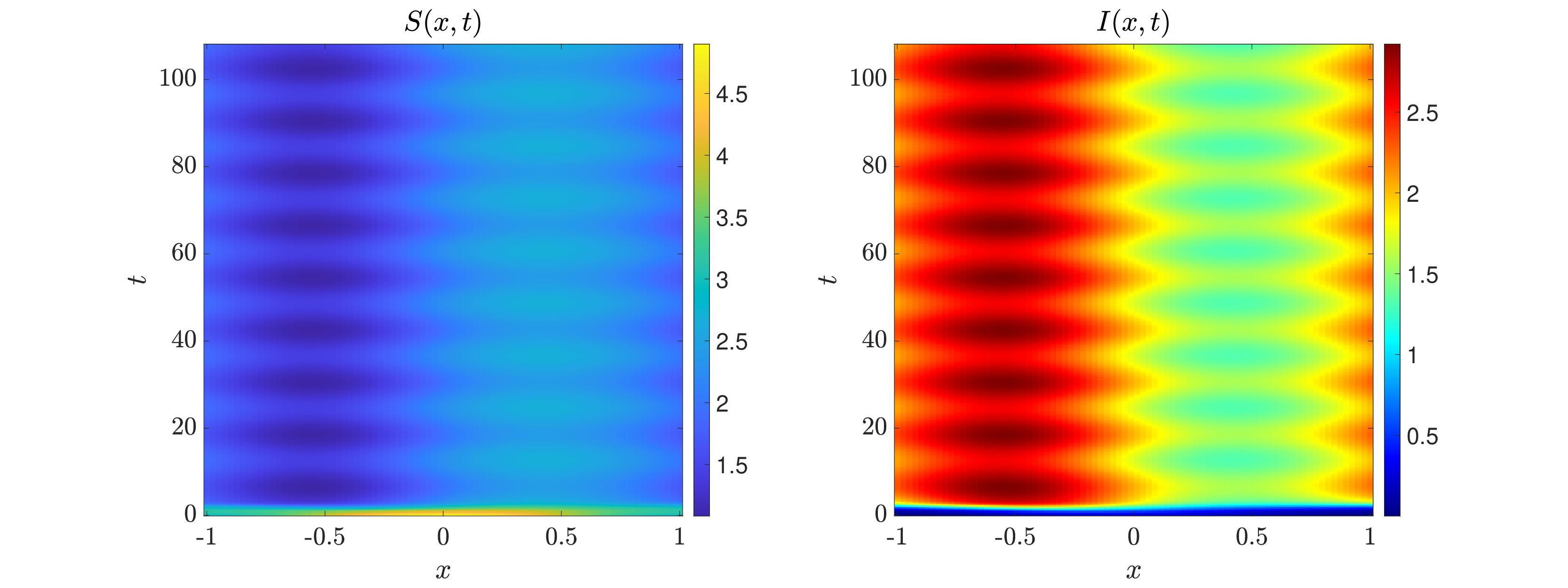}
		\centerline{\scriptsize{(a) $S_{0}(x)=3.9+\cos(\pi x)$, $I_{0}(x)=0.1+0.1\cos(\pi x)$} }
	\end{minipage}
	\begin{minipage}[t]{1\linewidth}
		\centering
		\includegraphics[height=0.32\linewidth,width=0.99\linewidth]{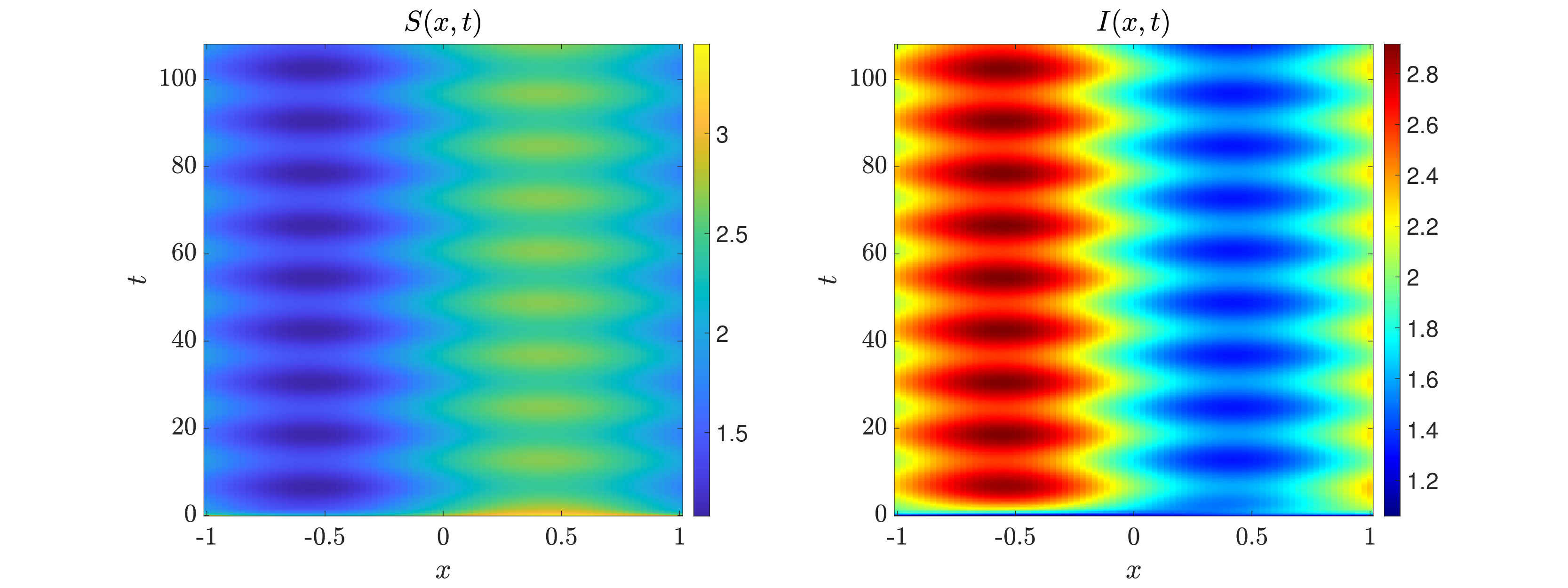}
		\centerline{\scriptsize{(b) $S_{0}(x)=2.8+0.65\sin(\pi x)$, $I_{0}(x)=1.2+0.14\sin(\pi x)$} }
	\end{minipage}
	\begin{minipage}[t]{1\linewidth}
		\centering
		\includegraphics[height=0.32\linewidth,width=0.99\linewidth]{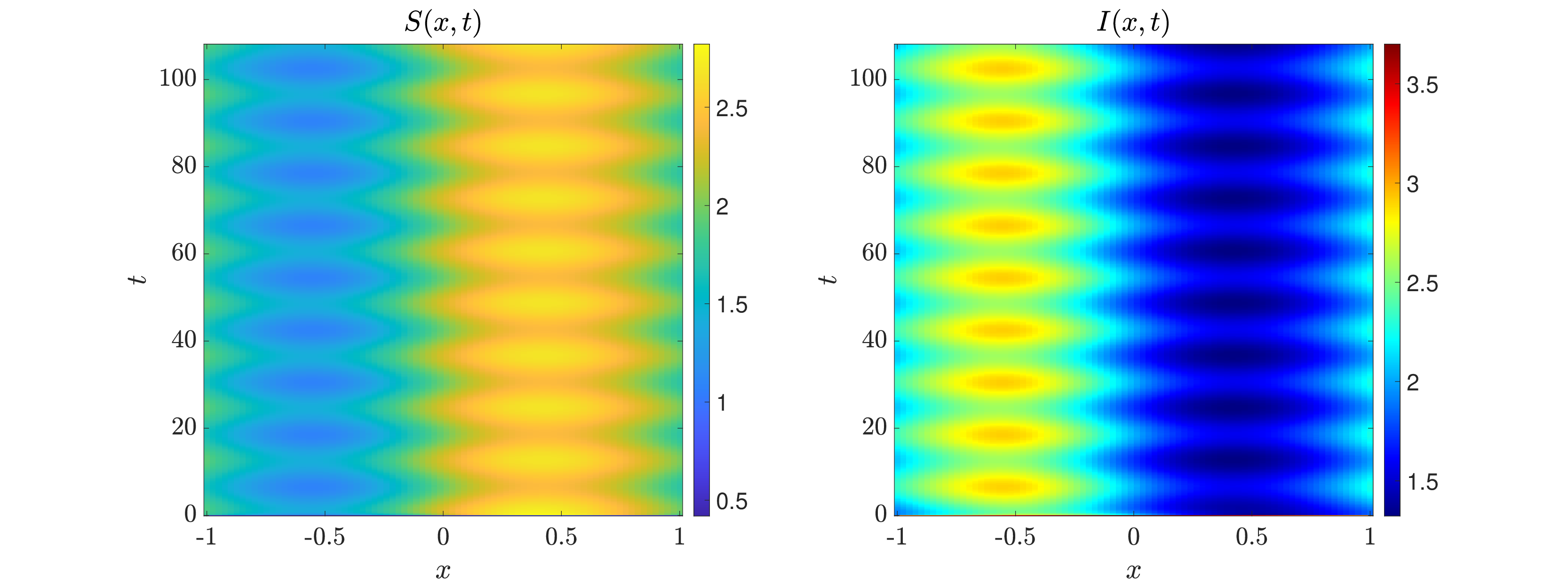}
		\centerline{\scriptsize{(c) $S_{0}(x)=0.6+0.18\sin(\pi x)$, $I_{0}(x)=3.4+0.3\cos(\pi x)$} }
	\end{minipage}
	\renewcommand{\figurename}{\footnotesize{\textbf{Fig.}}}
	\caption{\footnotesize{Spatiotemporal profiles of system (\ref{model1.2}) with $d_{S}=d_{I}=1$ and $R_0(d, N, m)>1$ under three distinct spatially heterogeneous initial conditions. (a) Initial data 1, (b) Initial data 2, (c) Initial data 4.}}
	\label{Fig7}
\end{figure}

\begin{figure}[h]
	\centering
	{
		\includegraphics[width=1.03\linewidth]{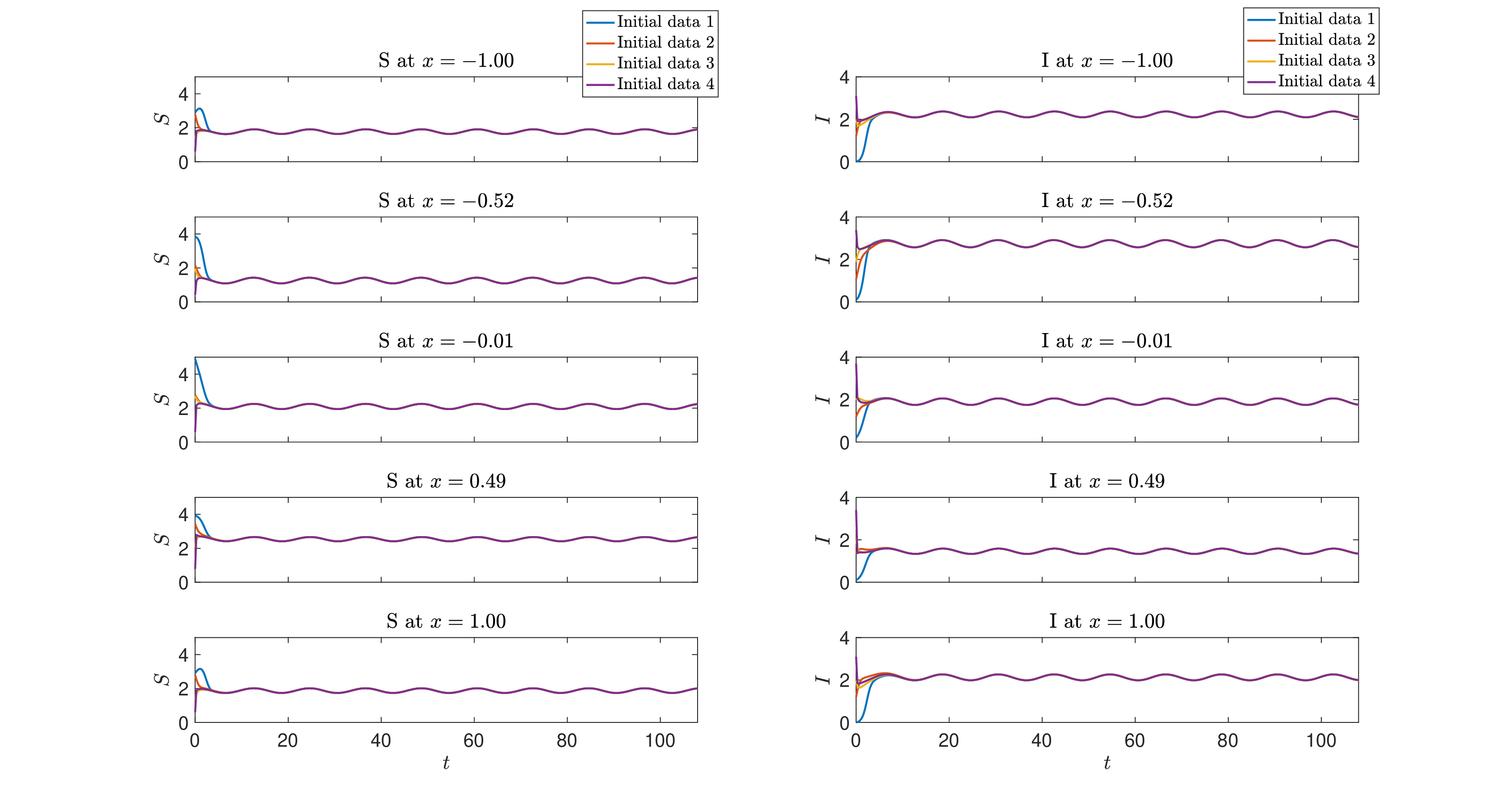}
	}
	\renewcommand{\figurename}{\footnotesize{\textbf{Fig.}}}
	\caption{\footnotesize{Time evolution of $S(x,t)$ and $I(x,t)$ with $d_{S}=d_{I}=1$ and $R_0(d, N, m)>1$ at fixed spatial positions $x=-1.00,-0.52,-0.01,0.49,1.00$ for four different initial conditions.}}
	\label{Fig8}
\end{figure}

\section{Conclusions and discussions}
	\label{section 6}

In this paper, we investigated a nonlocal dispersal SIS epidemic model with the saturated incidence function $\frac{SI}{m+S+I}$ in a spatially heterogeneous and temporally periodic environment. We first presented the basic reproduction number $R_0$ for system (\ref{model1.2}) and established its variational characterization. Furthermore, we examined the influence of dispersal rates, total population size and saturation parameters on the basic reproduction number. Then we derived the conditions for the existence and uniqueness of steady states for system (\ref{model1.2}). In addition, we established the global attractivity of both the disease-free and endemic equilibria for several special scenarios. We further studied how small saturation coefficients as well as low and high diffusion rates influence the profiles of the endemic equilibrium. Finally, we carried out numerical simulations.

It is noted that for SIS epidemic models with standard incidence \cite{Allen2008,Yang2019,Lin2023,Lin2024,Feng2025}, the basic reproduction number depends only on the diffusion coefficient $d_I$ of infected individuals. For SIS epidemic models with bilinear incidence \cite{Feng2022}, the basic reproduction number depends on both $d_I$ and the total population size $N$. However, the system considered in this paper incorporates the saturation effect, which makes $R_0$ depend on $d_I$, $N$, and the saturation parameter $m$, and also causes $N$ to have a significant impact on the disease dynamics. As the saturation parameter $m$ tends to zero, the basic reproduction number and the endemic equilibrium reduce to those of the standard-incidence model, respectively. The monotonicity of $R_0$ with respect to $d_I$ is difficult to obtain. Nevertheless, from Corollary \ref{corollary2.1} (iv)(v), it follows that a larger total population size enhances the transmission ability of the disease, while a stronger saturation effect weakens the transmission ability of the disease. Consequently, managing population density and reinforcing the saturation effect through interventions like contact restrictions and isolation policies are both beneficial for disease control.

To elucidate the biological implications of our analytical results, we adopt the terminology inspired by \cite{Allen2008,Peng2013,Peng2012}. A site is called a period-averaged low-risk (respectively, high-risk) site if its transmission rate averaged over one period is lower (respectively, higher) than its recovery rate averaged over the same period. Accordingly, $H^{-}$ and $H^{+}$ denote the sets of all period-averaged low-risk and high-risk sites, respectively. Combining Corollary \ref{corollary2.2}, Sections \ref{section 3} and \ref{section 4}, we obtain the  following biological interpretation.
	\begin{itemize}
	\item[{\rm (i)}]
If every site is period-averaged low-risk, then the disease eventually dies out, provided that susceptible and infected individuals have the same sufficiently small dispersal rate.
	
\item[{\rm (ii)}] If period-averaged high-risk sites exist, then sufficiently slow dispersal of infected individuals gives rise to a threshold population size. Furthermore, if susceptible and infected individuals have the same dispersal rate, the disease eventually dies out when the total population size is below the threshold, whereas it persists at an endemic level when the total population size exceeds the threshold.
\end{itemize}

\noindent Therefore, whether restricting the mobility of susceptible and infected individuals can eradicate the disease depends not only on the spatial distribution of the period-averaged transmission and recovery rates, but also on the total population size. This feature distinguishes our model from those with standard or bilinear incidence functions.

Additionally, there are some topics worth further investigations. For instance, whether $R_0$ is globally monotone with respect to $d_I$? Also, it should be pointed out that most of the conclusions in Sections~\ref{section 3} and~\ref{section 4} are based on the condition $d_S = d_I$. However, in reality, the dispersal rates of susceptible and infected individuals are often different. Therefore, it is necessary to explore the case where $d_S \neq d_I$, which would be interesting but may also give rise to more complex dynamical behaviors. These are left for future works.

\section*{Availability of data and material}
Not applicable.

\section*{Competing interests}
The authors declare that they have no competing interests.

\section*{Acknowledgments}
This work was supported by the National Natural Science Foundation of China (Nos. 12471176 and 12071491) and Guangdong Basic and Applied Basic Research Foundation (No. 2025A1515012221).  X. Lin was additionally supported by the China Postdoctoral Science Foundation (No. 2025M783159).



     \end{document}